\newcommand{\Z}{{\mathbb{Z}}}
\newcommand{\Q}{{\mathbb{Q}}}
\newcommand{\R}{{\mathbb{R}}}
\newcommand{\C}{{\mathbb{C}}}
\newcommand{\HH}{{\mathbb{H}}}
\newcommand{\Lform}{{\mathcal{L}}}
\newcommand{\LEigenform}{{\tilde{\mathcal{L}}}}
\newcommand{\U}{{\mathcal{U}}}
\DeclareMathOperator{\re}{Re}
\DeclareMathOperator{\im}{Im}
\newcommand{\ep}{\varepsilon}
\newcommand{\defeq}{\vcentcolon=}
\def\SL{{\rm SL}}
\def\GL{{\rm GL}}
\newcommand{\floor}[1]{\left\lfloor #1 \right\rfloor}
\newcommand{\ceil}[1]{\left\lceil #1 \right\rceil}
\renewcommand{\(}{\left(}
\renewcommand{\)}{\right)}
\newcommand{\la}{\left|}
\newcommand{\ra}{\right|}
\newcommand{\Ea}{E_{\mathfrak{a}}}
\newcommand{\Mod}[1]{\ (\mathrm{mod}\ #1)}
\DeclareMathOperator{\ch}{ch}
\DeclareMathOperator{\sh}{sh}
\DeclareMathOperator{\sgn}{sgn}
\newtheorem{theorem}{Theorem}
\newtheorem{lemma}[theorem]{Lemma}
\newtheorem{corollary}[theorem]{Corollary}
\newtheorem{proposition}[theorem]{Proposition}
\newtheorem{conjecture}[theorem]{Conjecture}
\newtheorem{definition}[theorem]{Definition}
\newtheorem{setting}[theorem]{Setting}
\theoremstyle{remark}
\newtheorem*{remark}{Remark}
\numberwithin{equation}{section}
\numberwithin{theorem}{section}
\numberwithin{lemma}{section}
\numberwithin{proposition}{section} 
\numberwithin{example}{section}
\numberwithin{definition}{section}
\numberwithin{corollary}{section}
\numberwithin{setting}{section}
\numberwithin{condition}{section}
\numberwithin{conjecture}{section}
\author{Qihang Sun}
\address{Department of Mathematics, University of Illinois, Urbana, IL 61801}
\email{qihangs2@illinois.edu}
\title[Uniform bounds for Kloosterman sums]{Uniform bounds for Kloosterman sums of half-integral weight, same-sign case}
\date{\today}
\keywords{Kloosterman sum; Maass form; Kuznetsov trace formula}
\begin{document}

\begin{abstract}
	In the previous paper \cite{QihangFirstAsympt}, the author proved a uniform bound for sums of half-integral weight Kloosterman sums. This bound was applied to prove an exact formula for partitions of rank modulo 3. That uniform estimate provides a more precise bound for a certain class of multipliers compared to the 1983 result by Goldfeld and Sarnak and generalizes the 2009 result from Sarnak and Tsimerman to the half-integral weight case. However, the author only considered the case when the parameters satisfied $\tilde m\tilde n<0$. In this paper, we prove the same uniform bound when $\tilde m\tilde n>0$ for further applications. 
\end{abstract}
\maketitle


\section{Introduction}

For positive integers $N$, $c$ and $m,n\in \Z$, we define the generalized Kloosterman sums with a multiplier system $\nu$ as
\[S(m,n,c,\nu)=\sum_{\substack{0\leq a,d<c \\ \gamma=\begin{psmallmatrix} a&b\\ c&d  		\end{psmallmatrix} 		\in \Gamma}} \overline{\nu}(\gamma)\(\frac{\tilde ma+\tilde nd}c\)\]
where $\Gamma=\Gamma_0(N)$ is a congruence subgroup of $\SL_2(\Z)$, $\nu$ is a weight $k\in \R$ multiplier system on $\Gamma$, and $\tilde n=n-\alpha_{\nu,\infty}$ as defined in \eqref{Alpha_nu}. These Kloosterman sums have been studied by Goldfeld and Sarnak \cite{gs} and Pribitkin \cite{pribitkin}. In a previous paper \cite{QihangFirstAsympt}, the author proved a uniform bound for the sums of such Kloosterman sums and applied the bound to estimate the partial sums of Rademacher-type exact formulas. For example, the Fourier coefficient $G(n)$ of $\gamma(q)$, which is a sixth order mock theta function defined in \cite[(0.17)]{AndrewsMockThetaSixthOrder}, can be written as \cite[Theorem~2.2]{QihangFirstAsympt}
\begin{equation}\label{RademacherExactFormula}
	G(n)=A\(\frac13;n\)=\frac{2\pi\,e(-\frac18)}{(24n-1)^{\frac14}}\sum_{3|c>0}\frac{S(0,n,c,\,(\frac \cdot3)\overline{\nu_\eta})}{c}I_{\frac12}\(\frac{\pi\sqrt{24n-1}}{6c}\),
\end{equation}
where $\nu_\eta$ is the multiplier system of weight $\frac12$ for Dedekind's eta-function (see \eqref{etaMultiplier}) and $I_\kappa$ is the $I$-Bessel function. The author bounded the error
\begin{equation}\label{errorp}
	R_3(n,x)=\frac{2\pi\,e(-\frac18)}{(24n-1)^{\frac14}}\sum_{3|c>x}\frac{S(0,n,c,\,(\frac \cdot3)\overline{\nu_\eta})}{c}I_{\frac12}\(\frac{\pi\sqrt{24n-1}}{6c}\)
\end{equation}
with \cite[Theorem~1.6]{QihangFirstAsympt} to get 
\[R_3(n,\alpha\sqrt{n})\ll_{\alpha,\ep} n^{-\frac1{147}+\ep}.\]

Our estimate can be applied to sums of Kloosterman sum with a class of multiplier systems. However, in the former paper \cite{QihangFirstAsympt} we only focus on the case $\tilde m\tilde n<0$. In this paper we add the complementary case $\tilde m\tilde n>0$ for further applications like Corollary~\ref{mainCorollary_goodnessHarmonic}.

To state the results, we need to classify our half-integral weight multipliers first:
\begin{definition}\label{Admissibility}
	Let $k=\pm \frac12$ and $\nu'=(\frac{|D|}\cdot)\nu_\theta^{2k}$ where $D$ is some even fundamental discriminant and $\nu_\theta$ is the multiplier for the theta function. 
	We say a weight $k$ multiplier $\nu$ on $\Gamma=\Gamma_0(N)$ is \underline{admissible} if it satisfies the following two conditions:
	\begin{itemize}\label{MultplCond}
		\item[(1)] Level lifting: there exist positive integers $B$ and $M$ such that the map $\mathscr{L}: (\mathscr{L}f)(z)=f(Bz)$ gives: 
		\begin{itemize}
			\item[(i)]	an injection from weight $k$ automorphic eigenforms of the hyperbolic Laplacian $\Delta_k$ on $(\Gamma_0(N),\nu)$ to those on $(\Gamma_0(M),\nu')$ and keeps the eigenvalue;
			\item[(ii)]an injection from weight $k$ holomorphic cusp forms on $(\Gamma_0(N),\nu)$ to weight $k$ holomorphic cusp forms on $(\Gamma_0(M),\nu')$. 
		\end{itemize}
		Here $M$ is a multiple of $4$ and $M$ depends on $B$.
		\item[(2)] Average Weil bound: for $x>y>0$ and $x-y\gg x^{\frac 23}$, we have
		\[\sum_{N|c\,\in[y,x]}\frac{|S(m,n,c,\nu)|}{c}\ll_{N,\nu,\ep} (\sqrt x-\sqrt y)( \tilde{m}  \tilde{n} x)^\ep. \]
	\end{itemize}	
\end{definition}

The author proved \cite[Proposition~5.1]{QihangFirstAsympt} that the following class of multipliers satisfy both the conditions: 
\begin{lemma}
	Let $\nu=(\frac{|D|}\cdot)\nu_\theta$ or $\nu=(\frac{|D|}\cdot)\nu_\eta$ where $D$ is a fundamental discriminant and $\nu_\theta$ and $\nu_\eta$ are the multiplier system for the standard theta function and Dedekind's eta function, respectively (see \eqref{thetaMultiplier} and \eqref{etaMultiplier}). Then both $\nu$ and its conjugate are admissible. 
\end{lemma}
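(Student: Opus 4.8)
The statement is \cite[Proposition~5.1]{QihangFirstAsympt}, and the plan is to follow that argument, verifying the two properties of Definition~\ref{Admissibility} for each of $(\frac{|D|}\cdot)\nu_\theta$, $(\frac{|D|}\cdot)\nu_\eta$ and their conjugates. Since $\nu_\theta^{-1}=\overline{\nu_\theta}$ and $\nu_\eta^{-1}=\overline{\nu_\eta}$, and since replacing a weight $k$ multiplier by its conjugate passes to the corresponding spaces of weight $-k$ automorphic and holomorphic cusp forms (leaving the Laplace eigenvalue unchanged) and conjugates the sums $S(m,n,c,\nu)$, it suffices to treat $\nu=(\frac{|D|}\cdot)\nu_\theta$ and $\nu=(\frac{|D|}\cdot)\nu_\eta$ with $k=\tfrac12$; the conjugates are the $k=-\tfrac12$ cases and are formally identical.

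For Condition~(1) I would exhibit $B$ and $M$ from the classical descriptions of the two multipliers. The theta series $\theta(z)=\sum_{m\in\Z}e(m^2z)$ realizes $\nu_\theta$ on $\Gamma_0(4)$ via the standard $(\frac cd)$-and-unit-factor formula, while the pentagonal number theorem gives $\eta(24z)=\sum_{m\ge1}(\frac{12}m)e(m^2z)$, so that the dilation $\mathscr{L}_{24}\colon f(z)\mapsto f(24z)$ carries the $\eta$-multiplier to the theta multiplier \emph{twisted} by the even fundamental discriminant $12$; this is the key computation. Thus in the $\eta$-case one first applies $z\mapsto 24z$, which combines $(\frac{|D|}\cdot)$ with $(\frac{12}\cdot)$, while in the $\theta$-case one already starts from a theta multiplier. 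In both cases a further dilation, together with the elementary reduction of a product of Kronecker symbols to the symbol of a single even fundamental discriminant, puts the multiplier into the normalized form $\nu'=(\frac{|D'|}\cdot)\nu_\theta$ on a level $M$ divisible by $4$ and depending on $B$. Injectivity of $\mathscr{L}_B\colon f(z)\mapsto f(Bz)$ is immediate; it preserves $\Delta_k$-eigenvalues because $z\mapsto Bz$ is the holomorphic change of variables by $\begin{psmallmatrix}\sqrt B&0\\0&1/\sqrt B\end{psmallmatrix}$, under which $\Delta_k$ is invariant, and it takes holomorphic cusp forms to holomorphic cusp forms in the usual way. The only substantive point is the multiplier identification.

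For Condition~(2) I would combine the pointwise Weil bound for these Kloosterman sums with a summation over $c$. For $\nu_\theta$ and its character twists the sums $S(m,n,c,\nu)$ are generalized Sali\'e sums: essentially multiplicative in $c$ and explicitly evaluable via Gauss sums; for $\nu_\eta$ they reduce, after replacing $c$ by $c/24$, to classical Kloosterman or Sali\'e sums twisted by $(\frac{|D|}\cdot)$. In all cases this gives $|S(m,n,c,\nu)|\ll_{N,\nu,\ep}(\tilde m\tilde n,c)^{1/2}c^{1/2+\ep}$. Summing over $N\mid c\in[y,x]$ and comparing $\sum_{y\le c\le x}c^{-1/2}$ with $\int_y^x t^{-1/2}\,dt=2(\sqrt x-\sqrt y)$ yields a bound $\ll_{N,\nu,\ep}(\tilde m\tilde nx)^\ep\bigl(\sqrt x-\sqrt y+O(1)\bigr)$; the hypothesis $x-y\gg x^{2/3}$ forces $\sqrt x-\sqrt y\gg x^{1/6}$, which absorbs the $O(1)$ and leaves exactly $\ll(\sqrt x-\sqrt y)(\tilde m\tilde nx)^\ep$.

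I expect Condition~(1) to be the main obstacle: one has to chase the multiplier system through the dilations and the products of quadratic characters and confirm that the image is \emph{precisely} of the shape $(\frac{|D'|}\cdot)\nu_\theta^{2k}$ with $D'$ an even fundamental discriminant and $4\mid M$, and in particular that holomorphic cusp forms (not merely eigenforms) are carried along without loss at the cusps. Once the Sali\'e and Kloosterman evaluations are in hand, Condition~(2) is routine.
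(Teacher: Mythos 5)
The paper itself offers no proof of this lemma beyond the citation to \cite[Proposition~5.1]{QihangFirstAsympt}, so the comparison is really against that cited argument. Your sketch reconstructs it along essentially the same lines: Condition~(1) via the dilation $f(z)\mapsto f(Bz)$ together with the identity $\eta(24z)=\sum_{m\geq1}(\frac{12}{m})e(m^2z)$ (so that $\nu_\eta$ lifts to $(\frac{12}{\cdot})\nu_\theta$ on level $576$) and the reduction of the resulting product of Kronecker symbols to a single even fundamental discriminant; Condition~(2) by summing the pointwise Weil bound for the Sali\'e-type evaluations of these sums, with $x-y\gg x^{2/3}$ absorbing the boundary terms. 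You also correctly isolate the multiplier identification as the only substantive computation. Two small points of looseness, neither fatal: first, as Proposition~\ref{Level lifting: holo cusp forms, compare} is applied, Condition~(1)(ii) is really needed for the whole tower of weights $K=k+2l\geq\frac52$, not just weight $k$; this is automatic from your argument because the multiplier identification under $z\mapsto Bz$ is independent of the weight, but it should be said. Second, in the conjugate case the holomorphic spaces $S_{-\frac12+2l}(N,\overline\nu)$ are not reached by complex conjugation of Maass forms (which produces antiholomorphic objects); they are handled by the same weight-independent multiplier computation applied directly to $\overline\nu$, so the reduction "the conjugates are formally identical" is justified but for a slightly different reason than the isometry $\LEigenform_k\leftrightarrow\LEigenform_{-k}$ you invoke. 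With those caveats the proposal is a faithful outline of the cited proof.
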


Let $\rho_j(n)$ denote the $n$-th Fourier coefficient of an orthonormal basis $\{v_j(\cdot)\}_j$ of $\LEigenform_{k}(N,\nu)$ (the space of square-integrable eigenforms of the weight $k$ Laplacian with respect to $(\Gamma_0(N),\nu)$, see Section~2 for details). Here are our theorems:

\begin{theorem}\label{mainThm}
	Suppose $  \tilde{m}>0$, $\tilde{n}> 0$	 and $\nu$ is a weight $k=\pm\frac12$ admissible multiplier on $\Gamma_0(N)$. We have
	\begin{equation}
		\sum_{N|c\leq X} \frac{S(m,n,c,\nu)}{c}=\!\!\!\sum_{r_j\in i(0,\frac14]}\!\!\! \tau_j(m,n)\frac{X^{2s_j-1}}{2s_j-1}
		+O_{\nu,\ep}\(\(A_u(m,n)+X^{\frac16}\) (  \tilde{m}  \tilde{n}X)^\ep\),
	\end{equation}
	where for $B$ and $M$ in Definition~\ref{Admissibility}, we factor $B\tilde \ell=t_\ell u_\ell^2 w_\ell^2$ with $t_\ell$ square-free, $u_\ell | M^\infty$ positive and $(w_\ell,M)=1$ for $\ell\in \{m,n\}$. Here $s_j=\im r_j+\frac12$, 
	\[\tau_j(m,n)=2i^k\overline{\rho_j(m)}\rho_j(n)\pi^{1-2s_j}(4\tilde m \tilde n)^{1-s_j}\frac{\Gamma(s_j+\frac k2)\Gamma(2s_j-1)}{\Gamma(s_j-\frac k2)}\]
	are the coefficients in \cite{gs} (as corrected by \cite[Proposition~7]{AAAlgbraic16}), and
	\begin{align*}A_u(m,n)&\defeq \(\tilde{m}^{\frac{131}{294}}+u_m\)^{\frac18} \(\tilde{n}^{\frac{131}{294}}+u_n\)^{\frac18} (\tilde{m}\tilde{n})^{\frac3{16}}\\
		&\ll (\tilde{m}  \tilde{n})^{\frac{143}{588}}+\tilde{m}^{\frac{143}{588}}\tilde{n}^{\frac 3{16}}\,u_n^{\frac18}+\tilde{m}^{\frac 3{16}}\tilde{n}^{\frac{143}{588}}\,u_m^{\frac18}+(\tilde{m}  \tilde{n})^{\frac3{16}}(u_mu_n)^{\frac18}.
	\end{align*}
\end{theorem}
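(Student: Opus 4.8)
The plan is to run the Kuznetsov trace formula in the half-integral weight setting and estimate the resulting spectral and arithmetic terms, exactly paralleling the $\tilde m\tilde n<0$ analysis of \cite{QihangFirstAsympt} but now with the Bessel kernel $J$-functions replaced by the $\tilde m\tilde n>0$ kernels. First I would weight the sum $\sum_{N\mid c\le X}S(m,n,c,\nu)/c$ against a smooth test function $\phi$ supported near the relevant range and apply Kuznetsov to convert it into (i) a spectral sum over the Maass basis $\{v_j\}$ involving $\overline{\rho_j(m)}\rho_j(n)$ times an integral transform $\hat\phi(r_j)$ of $\phi$; (ii) a continuous-spectrum contribution from Eisenstein series; (iii) a holomorphic-forms contribution; and a main term coming from the small (exceptional) eigenvalues $r_j\in i(0,\tfrac14]$, which after undoing the smoothing produces $\sum_j\tau_j(m,n)X^{2s_j-1}/(2s_j-1)$ with the Goldfeld--Sarnak coefficients as corrected in \cite{AAAlgbraic16}. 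The choice of $\phi$ and the $X^{1/6}$ term are handled as in the earlier paper, so the novelty is entirely in controlling the spectral sum over generic eigenvalues.

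The key estimate is a bound for $\sum_j |\rho_j(m)|\,|\rho_j(n)|$ (and the Eisenstein analogue) over a dyadic window of spectral parameters, which is where $A_u(m,n)$ enters. Here I would use the level-lifting hypothesis (1) of Definition~\ref{Admissibility} to move to the fixed multiplier $\nu'=(\tfrac{|D|}{\cdot})\nu_\theta^{2k}$ on $\Gamma_0(M)$, where the Shimura/Waldspurger-type correspondence relates $\rho_j(\ell)$ to Fourier coefficients of integral-weight forms and ultimately to central $L$-values $L(\tfrac12,\mathrm{sym})$ or $L(\tfrac12,f\times\theta)$-type quantities; factoring $B\tilde\ell=t_\ell u_\ell^2 w_\ell^2$ isolates the square-free ``fundamental discriminant'' part $t_\ell$ (controlled by a subconvex/Weyl-type bound with exponent $131/294$, the current record input) from the $u_\ell\mid M^\infty$ part (bounded trivially, contributing $u_\ell^{1/8}$) and the part coprime to $M$ (bounded by Hecke multiplicativity plus the $3/16$ exponent, i.e.\ the $\theta=7/64$ Ramanujan bound entering as $(\tilde m\tilde n)^{3/16}$). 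Combining the three factors with Cauchy--Schwarz over $j$ and a large-sieve / mean-value bound for $\sum_j|\rho_j(\ell)|^2$ in a spectral window produces precisely $A_u(m,n)=(\tilde m^{131/294}+u_m)^{1/8}(\tilde n^{131/294}+u_n)^{1/8}(\tilde m\tilde n)^{3/16}$.

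I would then assemble the pieces: the exceptional spectrum gives the main term; the generic discrete spectrum, the Eisenstein continuous spectrum, and the holomorphic forms each contribute $O((A_u(m,n)+X^{1/6})(\tilde m\tilde n X)^\ep)$, using the average Weil bound (condition (2)) to handle the ``diagonal''/large-$c$ tail and to remove the smoothing error, exactly as in \cite{QihangFirstAsympt}. The arithmetic of the factorization $B\tilde\ell=t_\ell u_\ell^2 w_\ell^2$ and the bookkeeping of how $B$ and $M$ from the level lift interact with $m$ and $n$ is routine once the $\tilde m\tilde n<0$ template is in place, since the Bessel transforms for $\tilde m\tilde n>0$ satisfy the same qualitative decay and growth bounds in $r_j$.

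The main obstacle I anticipate is the analytic behavior of the Kuznetsov integral transform in the $\tilde m\tilde n>0$ regime: unlike the $K$-Bessel kernel of the opposite-sign case, the $J$-Bessel kernel is oscillatory and only decays polynomially, so controlling $\hat\phi(r_j)$ uniformly for both the oscillatory large-$r_j$ range and the exceptional purely-imaginary range—while keeping the dependence on $\tilde m,\tilde n$ explicit enough to recover the stated exponents—requires careful stationary-phase analysis and is the step most likely to need genuinely new work rather than citation of \cite{QihangFirstAsympt}.
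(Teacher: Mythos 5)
Your skeleton matches the paper's strategy (smoothed sum, same-sign trace formula, main term from the exceptional spectrum, dyadic spectral analysis with mean-value bounds for $\sum_j|\rho_j(\ell)|^2$), but the proposal passes over the two places where the actual new work in this paper lives, and it misattributes the source of the exponents in $A_u(m,n)$.

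First, the weight $k=-\tfrac12$ case is a genuine structural obstacle you do not address. Proskurin's same-sign trace formula (Theorem~\ref{traceFormula Theorem}) is only available for $k=\tfrac12$ and $k=\tfrac32$, whereas the coefficient estimates (Propositions~\ref{AdsDukeEsstBd} and \ref{AhgDunEsstBd}) are stated for $k=\pm\tfrac12$; one cannot simply ``run Kuznetsov'' in weight $-\tfrac12$. The paper resolves this by applying the trace formula in weight $\tfrac32$ and transporting everything to weight $-\tfrac12$ via the Maass lowering operator $L_{\frac32}$, using $\rho_j(n)=-(\tfrac1{16}+r^2)^{1/2}\rho_j'(n)$ and its Eisenstein analogue, verifying that the weight-$\tfrac32$ main-term coefficients $\tau_j'(m,n)$ coincide with the weight-$(-\tfrac12)$ coefficients $\tau_j(m,n)$, and noting that $\tau_j(m,n)=0$ at the bottom of the spectrum $r_j=\tfrac i4$ by \eqref{CuspFormR0}--\eqref{Coeffi CuspFormR0}. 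The extra factors of $r_j$ from the lowering operator change the partial-summation exponents throughout Section~5.2, so this is not bookkeeping one can wave at. Second, the holomorphic-forms term $\mathcal U_k$ cannot be ``lumped in'': it requires a Fourier-coefficient bound that is uniform in the weight $K=k+2l$ as $l\to\infty$, and Waibel's bound as stated has an implied constant depending on $K$. The paper proves a modified, weight-uniform version (Proposition~\ref{Waibel's bound prop modified 143}, exponent $\tfrac{19}{42}$) using Landau's uniform $J$-Bessel bound; this is one of the genuinely new ingredients here and is absent from your plan.

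On the exponents: the paper does not go through a Shimura/Waldspurger correspondence to central $L$-values; it simply cites the mean-value estimate of \cite[Proposition~8.2]{QihangFirstAsympt}, which supplies the factor $(\tilde\ell^{131/294}+u_\ell)$, and the $(\tilde m\tilde n)^{3/16}$ in $A_u(m,n)=A(m,n)^{1/4}(\tilde m\tilde n)^{3/16}$ arises from optimizing the spectral cutoff $P(m,n)=2(\tilde m\tilde n)^{1/8}A(m,n)^{-1/2}$ in the dyadic range $r_j\geq\max(\tfrac ax,1)$ --- not from the Ramanujan bound $\theta=\tfrac7{64}$, which enters only to place the exceptional $r_j\neq\tfrac i4$ in $i(0,\tfrac\theta2]$. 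Finally, your anticipated obstacle (stationary phase for the oscillatory $J$-kernel) is not where the difficulty lies: the transform bounds follow from Dunster's uniform asymptotics and \cite[Lemma~6.3]{ADinvariants}, with the only delicate point being the precise asymptotic of $\widehat\phi(it)$ on the exceptional spectrum (Lemmas~\ref{R is Im precise bound} and \ref{mainphiLemma}), including the separate evaluation at $r=\tfrac i4$, which is what produces the correct constants in the main term.
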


\begin{remark}
	We have the following notes for this theorem: 
	\begin{itemize}
		\item The notation $u|M^\infty$ means $u|M^C$ for some positive integer $C$. 
		\item When $u_m$ and $u_n$ are both $O_{N,\nu}(1)$, $A_u(m,n)\ll_{N,\nu} (\tilde m\tilde n)^{\frac{143}{588}}$. 
		\item In general, $A_u(m,n)\ll_{N,\nu} (\tilde m\tilde n)^{\frac14}$. 
		\item When $k=-\frac12$ and $r_j=\frac i4$, we have $\tau_j(m,n)=0$ (see \eqref{CuspFormR0} and \eqref{Coeffi CuspFormR0}). 
		\item The theorem also applies to the case $\tilde m<0$ and $\tilde n<0$ because of \eqref{KlstmSumConj} by conjugation. 
	\end{itemize}
	
\end{remark}

Based on Theorem~\ref{mainThm} and 
\cite[Theorem~1.4]{QihangFirstAsympt}, we are able to reformulate the Linnik-Selberg conjecture in the half-integral weight case: 
\begin{conjecture}
	Suppose $k=\pm\frac 12$ and $\nu$ is a weight $k$ multiplier system on $\Gamma_0(N)$. If there is no eigenvalue for the hyperbolic Laplacian $\Delta_k$ in $(\frac 3{16},\frac14)$, then 
	\begin{equation}
		\sum_{N|c\leq X}\frac{S(m,n,c,\nu)}c-2X^{\frac12}\sum_{r_j=\frac i4} \tau_j(m,n)\ll_{N,\nu,\ep}|\tilde m\tilde nx|^\ep,
	\end{equation}
	where the second sum runs over normalized eigenforms of $\Delta_k$ with eigenvalue $\frac14+r_j^2=\frac3{16}$. 
\end{conjecture}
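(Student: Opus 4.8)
To prove Theorem~\ref{mainThm} the plan is to run the half-integral weight Kuznetsov trace formula for $(\Gamma_0(N),\nu)$ in its same-sign incarnation --- the one in which the geometric side is weighted by the $J$-Bessel function $J_{2ir}$ rather than by $K$- and $I$-Bessel functions --- and otherwise to follow the architecture of the opposite-sign argument in \cite{QihangFirstAsympt} and of Sarnak--Tsimerman in the integral weight case. As a first step I would replace the sharp cutoff $\mathbf 1_{c\le X}$ by a smooth minorant and majorant $g^{\pm}$ with $\mathbf 1_{[0,X-\Delta]}\le g^{-}\le\mathbf 1_{[0,X]}\le g^{+}\le\mathbf 1_{[0,X+\Delta]}$, $\Delta\asymp X^{2/3}$, and with derivatives of order $j$ of size $\Delta^{-j}$. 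Condition~(2), the average Weil bound, applied on the interval of length $\asymp X^{2/3}$ around $X$, shows that $\big|\sum_{N\mid c\le X}\tfrac{S(m,n,c,\nu)}{c}-\sum_{N\mid c}\tfrac{S(m,n,c,\nu)}{c}\,g^{\pm}(c)\big|\ll_{N,\nu,\ep}X^{-1/2}\Delta\,(\tilde m\tilde nX)^{\ep}=X^{1/6+\ep}(\tilde m\tilde n)^{\ep}$, which is within the target error, while the $O(1)$ terms with $c$ below a fixed bound contribute $O((\tilde m\tilde n)^{\ep})$ by the pointwise Weil bound. It then suffices to evaluate $\sum_{N\mid c}\tfrac{S(m,n,c,\nu)}{c}\,g^{\pm}(c)$.

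Next I would write $g^{\pm}(c)=\phi(4\pi\sqrt{\tilde m\tilde n}/c)$, so that $\phi$ is supported on $[4\pi\sqrt{\tilde m\tilde n}/(X+\Delta),\infty)$, equals $1$ on $[4\pi\sqrt{\tilde m\tilde n}/X,\infty)$, and has a transition of relative width $\asymp X^{-1/3}$, and apply Kuznetsov to obtain a spectral expansion $i^{k}\sum_j\overline{\rho_j(m)}\,\rho_j(n)\,\check\phi(r_j)$ plus a continuous-spectrum term, with $\check\phi$ the associated $J$-Bessel transform. For the exceptional parameters $r_j=it_j\in i(0,\tfrac14]$ I would insert $J_{-2t}(x)=\tfrac{(x/2)^{-2t}}{\Gamma(1-2t)}+O(x^{2-2t})$; the leading monomial integrates against $\phi$ to $\tfrac{1}{2t}(4\pi\sqrt{\tilde m\tilde n})^{-2t}X^{2t}$, up to an error controlled by the transition region and the $O(x^{2-2t})$ tail, and after carrying the Gamma-factors of the Kuznetsov normalization through (as in \cite{gs}, corrected in \cite[Proposition~7]{AAAlgbraic16}) this reproduces exactly the main term $\tau_j(m,n)\,X^{2s_j-1}/(2s_j-1)$ with $s_j=\im r_j+\tfrac12$; the eigenforms at $r_j=\tfrac i4$ when $k=-\tfrac12$, covered by Condition~(1)(ii), contribute $0$ to $\tau_j$. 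Since $2s_j-1\le\tfrac12$ and the exceptional Fourier coefficients satisfy $|\rho_j(n)|\ll\tilde n^{\,\im r_j-\frac12+\ep}$, the smoothing discrepancy $O(\Delta X^{2s_j-2})$ weighted by $\tau_j$ is $O(X^{-1/3+\ep})$, and the remaining errors in this step are absorbed in the same way.

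For the tempered discrete spectrum ($r_j\in\R$) and the continuous spectrum I would use the explicit shape of $\check\phi$: its value tends to $\pi/(r\sinh 2\pi r)$ as the lower endpoint of $\phi$ tends to $0$, with $O(|r|^{-3/2}e^{-\pi|r|})$ corrections coming from the cutoff near the origin, so that the kernel weighted against the Plancherel/spectral measure decays rapidly in $r$; the remaining contribution is then $\ll\int_{\R}|\check\phi(r)|\big(\sum_{|r_j-r|\le 1}|\rho_j(m)\,\rho_j(n)|+(\text{Eisenstein})\big)\,dr$. This is where Condition~(1) is used: pushing the forms on $(\Gamma_0(N),\nu)$ into $(\Gamma_0(M),\nu')$ via $\mathscr L$, one applies the spectral large sieve in weight $k=\pm\tfrac12$ together with the Katok--Sarnak correspondence and subconvexity for the relevant twisted $\GL_2$ $L$-functions (Kim--Sarnak's $\theta=7/64$ together with a Conrey--Iwaniec-type hybrid bound, assembled exactly as in \cite{QihangFirstAsympt}) to bound $\sum_{|r_j-r|\le 1}|\rho_j(n)|^2$ by a polynomial in $r$ times an explicit combination of powers of $\tilde n$ and $u_n$; Cauchy--Schwarz in $j$, the same bound for $m$, and integration in $r$ then produce the $A_u(m,n)\,(\tilde m\tilde nX)^{\ep}$ contribution, with Condition~(2) absorbing any range of $c$ that the spectral side does not treat efficiently. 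I expect this last step --- Fourier-coefficient bounds for half-integral weight forms that are uniform in $m$ and $n$ with the sharp exponent $\tfrac{131}{294}$, keeping careful track of the local factors at the primes dividing $M$ (the $u_\ell$'s) --- to be the main obstacle; by contrast, the changes relative to the opposite-sign case of \cite{QihangFirstAsympt} are essentially confined to replacing $K$- and $I$-Bessel asymptotics by the $J$-Bessel estimates used above.
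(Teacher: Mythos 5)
The statement you were asked about is the \emph{Conjecture} following Theorem~\ref{mainThm} --- the half-integral-weight analogue of the Linnik--Selberg conjecture --- and the paper does not prove it; it is stated as an open problem motivated by Theorem~\ref{mainThm}. Your proposal is in fact a sketch of the proof of Theorem~\ref{mainThm} (you even open with ``To prove Theorem~\ref{mainThm}\dots''), and as such it is broadly aligned with what the paper actually does: smooth the cutoff, apply Proskurin's trace formula, extract the exceptional-spectrum main terms from the $J_{-2t}$ asymptotics, and control the tempered discrete and continuous spectrum via the level-lifting condition and uniform Fourier-coefficient bounds. But that argument cannot establish the conjectured bound. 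The method inherently produces an error of size $\(X^{\frac16}+A_u(m,n)\)(\tilde m\tilde n X)^{\ep}$: the $X^{\frac16}$ arises from balancing the smoothing discrepancy $X^{\frac12-\delta}$ (from Condition~(2) applied to the transition ranges $[x-T,x]$ and $[2x,2x+2T]$) against the large-$r$ spectral contribution $X^{\frac\delta2}$ coming from $\widehat\phi(r)\ll\min(r^{-1},r^{-2}x/T)$, optimized at $\delta=\frac13$; no choice of $\Delta\asymp X^{2/3}$ or $\delta$ removes both simultaneously. Likewise the $A_u(m,n)\asymp(\tilde m\tilde n)^{143/588}$ term is forced by the Fourier-coefficient bounds of Proposition~\ref{AhgDunEsstBd} and cannot be reduced to $(\tilde m\tilde n)^{\ep}$ by these techniques. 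The hypothesis of the conjecture (no eigenvalues in $(\frac3{16},\frac14)$) only serves to collapse the secondary main terms to the single $2X^{1/2}\sum_{r_j=i/4}\tau_j(m,n)$; it does nothing to eliminate the $X^{1/6}$ or $A_u(m,n)$ losses.

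So the concrete gap is this: you have proposed a proof of the theorem, not of the conjecture, and the conjectured error term $|\tilde m\tilde n X|^{\ep}$ is strictly beyond the reach of the Kuznetsov/Proskurin trace-formula machinery you describe --- exactly as the classical Linnik--Selberg conjecture remains open despite Kuznetsov's $x^{1/6}$ bound and the refinement of Sarnak--Tsimerman. Nothing in your outline (nor in the paper) supplies the additional cancellation among the tempered spectral contributions that full square-root-of-nothing cancellation would require. If your intent was to prove Theorem~\ref{mainThm}, your sketch is essentially the paper's route (modulo the fact that the paper splits into the cases $k=\frac12$ and $k=-\frac12$, using the lowering operator $L_{3/2}$ to transfer between weight $\frac32$ and weight $-\frac12$ data, and uses Waibel-type bounds rather than subconvexity for the holomorphic contribution); if the intent was to prove the conjecture, the proposal does not and cannot close the gap.
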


We also get the following bound by the properties of Bessel functions. 
\begin{theorem}\label{mainThmLastSec}
	With the same setting as Theorem~\ref{mainThm}, for $\beta=\frac12$ or $\frac 32$, we have
	\begin{equation}\label{mainThmLastSecTheEquation}
		\sum_{N|c> \alpha\sqrt{\tilde{m}\tilde{n}}}\frac{S(m,n,c,\nu)}c \mathscr{B}_{\beta}\(\frac{4\pi \sqrt{\tilde{m}\tilde{n}}}c\)\ll_{\alpha,\nu,\ep}A_u(m,n)(\tilde{m}\tilde{n})^{\ep}
	\end{equation}
except the case when $\tilde m$, $\tilde n$ and $k=\pm\frac12$ are all of the same sign such that $\sum_{r_j=\frac i4}\tau_j(m,n)\neq 0$. 
	Here $\mathscr{B}_\beta$ is the Bessel function $I_\beta$ or $J_\beta$. Note that $A_u(m,n)\ll_{N,\nu} (\tilde m\tilde n)^{\frac{143}{588}}$ when $u_m$ and $u_n$ are $O_{N,\nu}(1)$. 
\end{theorem}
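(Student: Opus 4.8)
The plan is to deduce \eqref{mainThmLastSecTheEquation} directly from Theorem~\ref{mainThm} by partial summation, using nothing about $I_\beta$ and $J_\beta$ beyond their behaviour near the origin. Set $Q=4\pi\sqrt{\tilde m\tilde n}$, $Y=\alpha\sqrt{\tilde m\tilde n}$ and $P(X)=\sum_{N|c\le X}S(m,n,c,\nu)/c$, so that Theorem~\ref{mainThm} reads $P(X)=M(X)+E(X)$, where $M(X)=\sum_{r_j\in i(0,\frac14]}\tau_j(m,n)\,X^{2s_j-1}/(2s_j-1)$ and $|E(X)|\ll_{\nu,\ep}(A_u(m,n)+X^{1/6})(\tilde m\tilde n X)^\ep$. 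For $c>Y$ we have $Q/c<4\pi/\alpha$, and on $(0,4\pi/\alpha]$ both $I_\beta$ and $J_\beta$ are elementary (for instance $J_{1/2}(z)=\sqrt{2/(\pi z)}\,\sin z$ and $I_{1/2}(z)=\sqrt{2/(\pi z)}\,\sh z$) and satisfy $\mathscr{B}_\beta(z)\ll_{\alpha,\beta}z^\beta$ and $\mathscr{B}_\beta'(z)\ll_{\alpha,\beta}z^{\beta-1}$. A key preliminary remark is that, away from the excluded case, the $r_j=\frac i4$ terms drop out of $M(X)$: if $k=-\frac12$ each such $\tau_j(m,n)$ vanishes by the remark following Theorem~\ref{mainThm}, while if $k=\frac12$ then $\sum_{r_j=i/4}\tau_j(m,n)=0$ by hypothesis. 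Hence $M(X)$ becomes a finite sum over $r_j\in i(0,\frac14)$, whose exponents satisfy $2s_j-1<\frac12$, so $P(X)\ll_{N,\nu,\ep}(A_u(m,n)+X^{1/2-\delta})(\tilde m\tilde n X)^\ep$ for some $\delta=\delta(N,\nu)>0$.

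Next I would apply Abel summation to the left-hand side of \eqref{mainThmLastSecTheEquation}, rewriting it as
\[
\lim_{T\to\infty}\mathscr{B}_\beta(Q/T)\,P(T)-\mathscr{B}_\beta(Q/Y)\,P(Y)-\int_Y^\infty P(t)\,\frac{d}{dt}\mathscr{B}_\beta(Q/t)\,dt ,
\]
up to one Kloosterman term, negligible by the Weil bound, if $N\mid Y$. Since $\mathscr{B}_\beta(Q/T)\ll(\tilde m\tilde n)^{\beta/2}T^{-\beta}$ and $\beta\ge\frac12>\frac12-\delta$, the term at $T$ vanishes; the term at $Y$ is $\mathscr{B}_\beta(4\pi/\alpha)\,P(Y)\ll_{\alpha,N,\nu,\ep}A_u(m,n)(\tilde m\tilde n)^\ep$, since $Y\asymp(\tilde m\tilde n)^{1/2}$, the term $Y^{1/6}=(\tilde m\tilde n)^{1/12}$ is $\ll_{N,\nu}A_u(m,n)$, and $M(Y)$ is controlled by the exceptional-form coefficient bound underlying $A_u(m,n)$. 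For the integral I would use $\big|\frac{d}{dt}\mathscr{B}_\beta(Q/t)\big|=\frac{Q}{t^2}\big|\mathscr{B}_\beta'(Q/t)\big|\ll(\tilde m\tilde n)^{\beta/2}t^{-\beta-1}$ and split $P=M+E$. The $E$-part is $\ll(\tilde m\tilde n)^{\beta/2+\ep}\int_Y^\infty(A_u(m,n)+t^{1/6})\,t^{-\beta-1+\ep}\,dt$, which converges as $\beta\ge\frac12>\frac16$ and, upon inserting $Y\asymp(\tilde m\tilde n)^{1/2}$, is $\ll A_u(m,n)(\tilde m\tilde n)^\ep$. The $M$-part is a finite sum, each term of which I would estimate by $\int_Y^\infty t^{2s_j-1}\big|\frac{d}{dt}\mathscr{B}_\beta(Q/t)\big|\,dt$, an integral that converges at infinity precisely because $2s_j-1<\frac12\le\beta$ and is $\ll_{N,\nu}(\tilde m\tilde n)^{(2s_j-1)/2}$; multiplying by $|\tau_j(m,n)|\ll_{N,\nu}|\rho_j(m)\rho_j(n)|(\tilde m\tilde n)^{1-s_j}$ gives a contribution $\ll|\rho_j(m)\rho_j(n)|(\tilde m\tilde n)^{1/2}$, which is $\ll_{N,\nu,\ep}A_u(m,n)(\tilde m\tilde n)^\ep$ by the same Fourier-coefficient bound for exceptional eigenforms used in the proof of Theorem~\ref{mainThm}. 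Adding the three contributions yields \eqref{mainThmLastSecTheEquation}; the case $\tilde m,\tilde n<0$ follows by conjugating the Kloosterman sum, as in the last remark after Theorem~\ref{mainThm}.

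The hard part, and the reason for the exclusion, is the $r_j=\frac i4$ eigenforms, which appear only for $k=\frac12$ and there correspond to the weight-$\frac12$ holomorphic cusp forms. For these $s_j=\frac34$, and the integral above degenerates: when $\beta=\frac12$ it behaves like $\int_Y^\infty t^{-1}\,dt$ and diverges, so the left-hand side of \eqref{mainThmLastSecTheEquation} does not even converge; when $\beta=\frac32$ it survives as an extra term that is $\gg_{N,\nu}(\tilde m\tilde n)^{1/4}$ whenever $\sum_{r_j=i/4}\tau_j(m,n)\ne0$, and $(\tilde m\tilde n)^{1/4}$ cannot be absorbed into $A_u(m,n)(\tilde m\tilde n)^\ep$ in general (e.g.\ when $u_m,u_n=O_{N,\nu}(1)$), since the Fourier coefficients of these special forms satisfy only the trivial bound rather than the subconvexity-strengthened one entering $A_u(m,n)$. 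Both obstructions disappear exactly under the hypothesis $\sum_{r_j=i/4}\tau_j(m,n)=0$, since the degenerate integral is common to all such $j$. Apart from this, the only ingredient that is not a routine estimate is the bound on $|\rho_j(m)\rho_j(n)|$ for exceptional $r_j$ — which is precisely the input already proved in constructing the term $A_u(m,n)$ of Theorem~\ref{mainThm}, so no new arithmetic is needed.
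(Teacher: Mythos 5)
Your proposal is correct and follows essentially the same route as the paper, which simply defers to the partial-summation argument of \cite[\S 9.2]{QihangFirstAsympt}: one deduces \eqref{mainThmLastSecTheEquation} from Theorem~\ref{mainThm} by Abel summation, using $\mathscr{B}_\beta(z)\ll z^\beta$, $\mathscr{B}_\beta'(z)\ll z^{\beta-1}$ on bounded arguments, and the vanishing of the $r_j=\tfrac i4$ contribution outside the excluded case. Your accounting of the boundary terms, the $E$- and $M$-parts of the integral, and the reason the $s_j=\tfrac34$ terms force the exclusion (divergence for $\beta=\tfrac12$, an unabsorbable extra main term for $\beta=\tfrac32$) matches the intended proof.
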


\subsection{Application}
In \cite{BringmannOno2012}, Bringmann and Ono constructed Maass-Poincar\'e series to prove that the Fourier coefficients of weight $k\in \Z+\frac12$, $k\leq \frac12$ harmonic Maass forms have exact formulas, i.e. they equal infinite sums of Kloosterman sums (up to Fourier coefficients from holomorphic theta functions when the weight is $\frac12$). The specific case, weight $\frac12$, is crucial as it is related to the rank of partitions. Readers may also refer to \cite{BrmOno2006ivt} or \cite[Section~4]{QihangFirstAsympt} for further introduction. 

We can denote the weight $\frac12$ Maass-Poincar\'e series as $P_{\mathfrak{a}}(z,m,\Gamma_0(N),\frac12,s,\nu)$ for cusp $\mathfrak{a}$ and multiplier $\nu$ of $\Gamma_0(N)$ as \cite[(3.4)]{BringmannOno2012}. The exact formulas are derived from $P_{\mathfrak{a}}$ at $s=\frac 34$, while we only know the convergence at $\re s>1$ (see \cite[Lemma~3.1, (3.7)]{BringmannOno2012}). Bringmann and Ono called a weight $\frac12$ harmonic Maass form on $(\Gamma_0(N),\nu)$ is {\it good} if those Maass-Poincar\'e series corresponding to nontrivial terms in the principal parts of $f$ are individually convergent. They conclude the exact formula for its Fourier coefficients only when $f$ is {\it good}. 

With Theorem~\ref{mainThmLastSec} and \cite[Remark~(1) after Theorem~3.2]{BringmannOno2012}, we have
\begin{corollary}\label{mainCorollary_goodnessHarmonic}
	Suppose $f$ is a weight $\frac12$ harmonic Maass form on $(\Gamma_0(N),\nu)$ and $f$ only has non-trivial principal part at cusp $\infty$. Then $f$ is {\rm good} if $\nu$ is admissible (Definition~\ref{Admissibility}). 
\end{corollary}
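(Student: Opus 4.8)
The plan is to deduce the corollary from Theorem~\ref{mainThmLastSec} (together with its opposite-sign companion \cite[Theorem~1.6]{QihangFirstAsympt}) via the convergence criterion recorded in \cite[Remark~(1) after Theorem~3.2]{BringmannOno2012}. By the definition of good recalled above, and since $f$ has non-trivial principal part only at $\infty$, we must show that each Maass-Poincar\'e series $P_\infty(z,m,\Gamma_0(N),\tfrac12,s,\nu)$ converges at $s=\tfrac34$, where $m$ runs over the finitely many indices with $\tilde m<0$ occurring in the principal part of $f$ at $\infty$. By the cited remark, for each such $m$ this convergence holds as soon as the relevant Kloosterman-Bessel series $\sum_{N\mid c}\frac{S(m,n,c,\nu)}{c}\,\mathscr{B}_\beta\!\left(\frac{4\pi\sqrt{|\tilde m\tilde n|}}{c}\right)$ appearing in the Fourier expansion of $P_\infty$ converges; here $\beta\in\{\tfrac12,\tfrac32\}$ and $\mathscr{B}_\beta\in\{I_\beta,J_\beta\}$, and — since $\tilde m<0$ — the index $n$ contributes a same-sign sum ($\tilde m\tilde n>0$) when $\tilde n<0$ and an opposite-sign sum ($\tilde m\tilde n<0$) when $\tilde n>0$.

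For the opposite-sign sums one invokes \cite[Theorem~1.6]{QihangFirstAsympt}, which yields convergence with an explicit bound. For a same-sign sum, write $\tilde m,\tilde n<0$ and apply \eqref{KlstmSumConj}: conjugation replaces $\nu$ by the weight $-\tfrac12$ multiplier $\overline{\nu}$, which is again admissible (this is the same reduction used in the remark after Theorem~\ref{mainThm}), and replaces $m,n$ by $m^\ast,n^\ast$ with $\widetilde{m^\ast}=|\tilde m|>0$ and $\widetilde{n^\ast}=|\tilde n|>0$; as $I_\beta$ and $J_\beta$ are real on $(0,\infty)$, the sum becomes the complex conjugate of $\sum_{N\mid c}\frac{S(m^\ast,n^\ast,c,\overline{\nu})}{c}\,\mathscr{B}_\beta\!\left(\frac{4\pi\sqrt{\widetilde{m^\ast}\widetilde{n^\ast}}}{c}\right)$. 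Theorem~\ref{mainThmLastSec} asserts this series converges, with value $O_{\alpha,\overline{\nu},\ep}\!\big(A_u(m^\ast,n^\ast)(\widetilde{m^\ast}\widetilde{n^\ast})^\ep\big)<\infty$; adding back the finitely many terms with $c\le\alpha\sqrt{\widetilde{m^\ast}\widetilde{n^\ast}}$ shows the original same-sign sum converges. Hence all the Kloosterman-Bessel series attached to $P_\infty(z,m,\cdot)$ converge, each $P_\infty(z,m,\cdot)$ converges at $s=\tfrac34$, and $f$ is good.

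The one subtlety is the exceptional clause of Theorem~\ref{mainThmLastSec}, which is excluded when $\tilde m$, $\tilde n$, and $k=\pm\tfrac12$ are all of one sign and $\sum_{r_j=i/4}\tau_j(m,n)\neq0$. This never bites here: because the principal-part index always has $\tilde m<0$, we never apply Theorem~\ref{mainThmLastSec} in weight $+\tfrac12$, and after the conjugation above the weight is $-\tfrac12$ while both new indices are positive — so the three quantities are not all of one sign; moreover $\tau_j(m^\ast,n^\ast)=0$ whenever $k=-\tfrac12$ and $r_j=\tfrac i4$ by \eqref{CuspFormR0}--\eqref{Coeffi CuspFormR0}, so the clause is vacuous in any event. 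I expect the only genuine work, beyond this sign bookkeeping, to be identifying the indices $n$ and the Bessel data $(\beta,\mathscr{B}_\beta)$ that actually occur in the Fourier expansion of $P_\infty$ in \cite{BringmannOno2012} and matching them against the hypotheses of Theorem~\ref{mainThmLastSec} and \cite[Theorem~1.6]{QihangFirstAsympt}; once that dictionary is in place the corollary is immediate.
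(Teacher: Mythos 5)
Your proposal is correct and follows essentially the same route as the paper, which justifies the corollary in one line by combining Theorem~\ref{mainThmLastSec} (the same-sign case, extended to $\tilde m,\tilde n<0$ via the conjugation \eqref{KlstmSumConj}) with \cite[Theorem~1.6]{QihangFirstAsympt} for the opposite-sign coefficients and the convergence criterion of \cite[Remark~(1) after Theorem~3.2]{BringmannOno2012}. Your sign bookkeeping for the exceptional clause is also the intended one: with $\tilde m,\tilde n<0$ and $k=\tfrac12$ the three quantities are never all of one sign, and after conjugation $\tau_j(m^\ast,n^\ast)=0$ at $r_j=\tfrac i4$ in weight $-\tfrac12$ by \eqref{CuspFormR0}--\eqref{Coeffi CuspFormR0}, so the excluded case of Theorem~\ref{mainThmLastSec} cannot occur.
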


The paper is organized as follows. In Section~2 we introduce notations on Kloosterman sums and Maass forms. Section~3 revisits the trace formula by Proskurin \cite{Proskurin1982} and discusses certain properties crucial for the subsequent proof. Section~4 is about bounds on translations of the test function. The proof of Theorem~\ref{mainThm} is presented in Section~5 and is divided into two cases: the weight $k=\frac12$ and $k=-\frac12$. Readers are advised to take particular note of the Remark following Proposition~\ref{AhgDunEsstBd} and be mindful of the weight context in Section~5.

\section{Background}

In this section we recall some basic notions on Maass forms with general weight and multiplier. Let $\Gamma = \Gamma_0(N)$ for some $N\geq 1$ denote our congruence subgroup and $\HH$ denote the upper half complex plane. Fixing the argument in $(-\pi,\pi]$, for any $\gamma\in\SL_2(\R)$ and $z=x+iy\in\HH$, we define 
\[j(\gamma,z)\defeq \frac{cz+d}{|cz+d|}=e^{i\arg(cz+d)} \]
and the weight $k$ slash operator 
\[f|_k\gamma\defeq j(\gamma,z)^{-k}f(\gamma z) \]
for $k\in \R$. 
We say that $\nu:\Gamma\to \C^\times$ is a multiplier system of weight $k$ if
\begin{enumerate}[label=(\roman*)]
	\item $|\nu|=1$,
	\item $\nu(-I)=e^{-\pi i k}$, and
	\item $\nu(\gamma_1 \gamma_2) =w_k(\gamma_1,\gamma_2)\nu(\gamma_1)\nu(\gamma_2)$ for all $\gamma_1,\gamma_2\in \Gamma$, where
	\[w_k(\gamma_1,\gamma_2)\defeq j(\gamma_2,z)^k j(\gamma_1,\gamma_2 z)^k j(\gamma_1\gamma_2, z)^{-k}.\]
\end{enumerate}
If $\nu$ is a multiplier system of weight $k$, then it is also a multiplier system of weight $k\pm2$ and its conjugate $\overline\nu$ is a multiplier system of weight $-k$. 

We are interested in the following multiplier systems of weight $\frac12$ and their conjugates of weight $-\frac12$. The theta-multiplier
$\nu_{\theta}$ on $\Gamma_{0}(4)$ is given by
\begin{equation}\label{thetaMultiplier}
	\theta(\gamma z) = \nu_{\theta}(\gamma) \sqrt{cz+d}\; \theta(z), \quad \gamma=\begin{pmatrix}
		a&b\\c&d
	\end{pmatrix}\in \Gamma_0(4)
\end{equation}
where
\[
\theta(z) \defeq \sum_{n\in\Z} e(n^2 z), \quad \nu_{\theta}(\gamma)=\(\frac cd\)\ep_d^{-1}, \quad \ep_d=\left\{ \begin{array}{ll}
	1&d\equiv 1\Mod 4,\\
	i&d\equiv 3\Mod 4, 
\end{array}\right.
\]
and $\(\frac \cdot\cdot\)$ is the extended Kronecker symbol such that $\(\tfrac{-1}d\)=(-1)^{\frac{d-1}2}=\ep_d^2$ for odd $d\in \Z$.  
The eta-multiplier $\nu_\eta$ on $\SL_2(\Z)$ is given by
\begin{equation}
	\eta(\gamma z) = \nu_{\eta}(\gamma) \sqrt{cz+d}\; \eta(z), \quad \gamma=\begin{pmatrix}
		a&b\\c&d
	\end{pmatrix}\in \SL_2(\Z)
\end{equation}
where
\begin{equation}
	\eta(z) \defeq q^{\frac1{24}}\prod_{n=1}^\infty (1-q^n)\quad \text{and}\quad q=e(z)\defeq e^{2\pi iz}. 
\end{equation}
Explicit formulas for $\nu_\eta$ were given by Rademacher \cite[(74.11), (74.12)]{Rad73Book}: 
\begin{equation}\label{etaMultiplier}
	\nu_\eta(\gamma)=e\(-\frac18\)e^{-\pi i s(d,c)}e\(\frac{a+d}{24c}\), \quad s(d,c)\defeq\sum_{r=1}^{c-1}\frac rc\(\frac{dr}c-\floor{\frac{dr}c}-1\),
\end{equation}
for all $c\in \Z$ and also given by Knopp \cite{Knopp70Book}: 
\begin{equation}
	\nu_{\eta}(\gamma)=\left\{
	\begin{array}{ll}
		\(\dfrac dc\)e\(\dfrac1{24}\Big((a+d)c-bd(c^2-1)-3c\Big)\) & \text{if } c \text{ is odd,}\\
		\(\dfrac cd\)e\(\dfrac1{24}\Big((a+d)c-bd(c^2-1)+3d-3-3cd\Big)\) & \text{if } c \text{ is even,}
	\end{array}
	\right. 
\end{equation}
for $c>0$. The properties $\nu_\eta\(\pm\begin{psmallmatrix}
	1&b\\0&1
\end{psmallmatrix}\)=e\(\tfrac{b}{24}\)$ and $\nu_{\eta}(-\gamma)=i\nu_\eta(\gamma)$ for $c>0$ are convenient in computations.

\subsection{Kloosterman sums}\label{Backgroud 2.2}
For any cusp $\mathfrak{a}$ of $\Gamma=\Gamma_0(N)$, let $\Gamma_{\mathfrak{a}}$ denote its stabilizer in $\Gamma$. For example, $\Gamma_\infty=\{
\pm\begin{psmallmatrix}
	1&b\\0&1
\end{psmallmatrix}:b\in\Z\}$. Let $\sigma_{\mathfrak{a}}\in\SL_2(\R)$ denote its scaling matrix satisfying $\sigma_{\mathfrak{a}}\infty=\mathfrak{a}$ and $\sigma_{\mathfrak{a}}^{-1} \Gamma_{\mathfrak{a}}\sigma_{\mathfrak{a}}=\Gamma_\infty$. We define $\alpha_{\nu,\mathfrak{a}} \in [0,1)$ by the condition
\begin{equation}\label{Alpha_nu}
	\nu\( \sigma_{\mathfrak{a}}\begin{psmallmatrix} 
		1 & 1 \\ 
		0 & 1
	\end{psmallmatrix}\sigma_{\mathfrak{a}}^{-1} \) = e(-\alpha_{\nu,\mathfrak{a}}).
\end{equation}
The cusp $\mathfrak{a}$ is called singular if $\alpha_{\nu,\mathfrak{a}}=0$. When $\mathfrak{a}=\infty$, we drop the subscript, denote $\alpha_{\nu}\defeq\alpha_{\nu,\infty}$, and define $ \tilde{n} \defeq n-\alpha_{\nu}$ for $n\in \Z$.
The Kloosterman sum at the cusp pair $(\infty,\infty)$ with respect to the multiplier system $\nu$ is given by 
\begin{equation}
	\label{eq:kloos_def}
	S(m,n,c,\nu) :=\!\!\! \sum_{\substack{0\leq a,d<c \\ \gamma=\begin{psmallmatrix} a&b\\ c&d 
			\end{psmallmatrix}
			\in \Gamma}}\!\!\!  \overline\nu(\gamma) e\(\frac{ \tilde{m} a+ \tilde{n} d}{c}\)
	=\!\!\! \sum_{\substack{\gamma\in \Gamma_\infty\setminus\Gamma/\Gamma_\infty\\\gamma=\begin{psmallmatrix} a&b\\ c&d 
	\end{psmallmatrix}}} \!\!\!  \overline\nu(\gamma) e\(\frac{ \tilde{m} a+ \tilde{n} d}{c}\).
\end{equation}
They have the relationships
\begin{equation}\label{KlstmSumConj}
	\overline{S(m,n,c,\nu)}=\left\{\begin{array}{ll}
		S(1-m,1-n,c,\overline \nu)&\ \  \text{if\ }\alpha_{\nu}>0,\\
		S(-m,-n,c,\overline\nu)  &\ \  \text{if\ }\alpha_{\nu}=0,
	\end{array}\right. 
\end{equation}
because 
\begin{equation}\label{tildeNConj}
	n_{\overline{\nu}}=\left\{\begin{array}{ll}
		-(1-n)_{\nu}&\ \  \text{if\ } \alpha_{\nu}>0,\\
		n  &\ \ \text{if\ }\alpha_{\nu}=0.
	\end{array}\right. 
\end{equation}

\subsection{Maass forms}
We call a function $f:\HH\rightarrow \C$ automorphic of weight $k$ and multiplier $\nu$ on $\Gamma$ if 
\[f|_k\gamma =\nu(\gamma)f \quad\text{for all }\gamma\in\Gamma.\] Let $\mathcal{A}_k(N,\nu)$ denote the linear space consisting of all such functions and $\Lform_k(N,\nu)\subset \mathcal{A}_k(N,\nu)$ denote the space of square-integrable functions on $\Gamma\setminus\HH$ with respect to the measure \[d\mu(z)=\frac{dxdy}{y^2}\]
and the Petersson inner product 
\[\langle f,g\rangle \defeq\int_{\Gamma\setminus\HH} f(z)\overline{g(z)}\frac{dxdy}{y^2} \]
for $f,g\in\Lform_k(N,\nu)$. For $k\in \R$, the Laplacian
\begin{equation}\label{Laplacian}
	\Delta_k\defeq y^2\(\frac{\partial^2}{\partial x^2}+\frac{\partial^2}{\partial y^2}\)-iky \frac{\partial}{\partial x}
\end{equation}
can be expressed as
\begin{align}
	\Delta_k&=-R_{k-2}L_k-\frac k2\(1-\frac k2\)\\
	&=-L_{k+2}R_k+\frac k2\(1+\frac k2\)
\end{align}
where $R_k$ is the Maass raising operator
\begin{equation}\label{Maass Raising op}
	R_k\defeq \frac k2+2iy\frac{\partial}{\partial z}=\frac k2+iy\(\frac{\partial}{\partial x}-i\frac{\partial}{\partial y}\)
\end{equation}
and $L_k$ is the Maass lowering operator
\begin{equation}\label{Maass Lowering op}
	L_k\defeq \frac k2+2iy\frac{\partial}{\partial \bar z}=\frac k2+iy\(\frac{\partial}{\partial x}+i\frac{\partial}{\partial y}\).
\end{equation}
These operators raise and lower the weight of an automorphic form as
\[(R_k f)|_{k+2}\;\gamma=R_k(f|_{k}\gamma),\quad (L_k f)|_{k-2}\;\gamma=L_k(f|_{k}\gamma),\quad  \text{for\ } f\in\mathcal{A}_k(N,\nu)\]
and satisfy the commutative relations
\begin{equation}\label{RLOperators}
	R_k\Delta_k=\Delta_{k+2}R_k,\quad L_k\Delta_k=\Delta_{k-2}L_k. 
\end{equation}
Moreover, $\Delta_k$ commutes with the weight $k$ slash operator for all $\gamma\in\SL_2(\R)$. 

We call a real analytic function $f:\HH\rightarrow \C$ an eigenfunction of $\Delta_k$ with eigenvalue $\lambda\in\C$ if
\[\Delta_k f+\lambda f=0. \]
From \eqref{RLOperators}, it is clear that an eigenvalue $\lambda$ for the weight $k$ Laplacian is also an eigenvalue for weight $k\pm 2$. 
We call $f\in \mathcal{A}_k(N,\nu)$ a Maass form if it is a smooth eigenfunction of $\Delta_k$ and satisfies the growth condition
\[(f|_k\gamma)(x+iy)\ll y^\sigma+y^{1-\sigma} \]
for all $\gamma\in\SL_2(\Z)$ and some $\sigma$ depending on $\gamma$ when $y\rightarrow +\infty$. Moreover, if a Maass form $f$ satisfies
\[\int_0^1 (f|_k\sigma_{\mathfrak{a}})(x+iy)\;e(\alpha_{\nu,\mathfrak{a}}x)dx=0\]
for all cusps $\mathfrak{a}$ of $\Gamma$, then $f\in\Lform_k(N,\nu)$ and we call $f$ a Maass cusp form. For details see \cite[\S 2.3]{AAimrn}

Let $\mathcal{B}_k(N,\nu)\subset \Lform_k(N,\nu)$ denote the space of smooth functions $f$ such that both $f$ and $\Delta_k f$ are bounded. One can show that $\mathcal{B}_k(N,\nu)$ is dense in $\Lform_k(N,\nu)$ and $\Delta_k$ is self-adjoint on $\mathcal{B}_k(N,\nu)$. If we let $\lambda_0\defeq \lambda_0(k)=\tfrac {|k|}2(1-\tfrac {|k|}2)$, then for $f\in\mathcal{B}_k(N,\nu)$,
\[\langle f,-\Delta_k f\rangle\geq \lambda_0 \langle f,f\rangle,\]
i.e. $-\Delta_k$ is bounded from below. By the Friedrichs extension theorem, $-\Delta_k$ can be extended to a self-adjoint operator on $\Lform_k(N,\nu)$. 
The spectrum of $\Delta_k$ consists of two parts: the continuous spectrum $\lambda\in [\frac14,\infty)$ and the discrete spectrum of finite multiplicity contained in $[\lambda_0,\infty)$.

Let $\lambda_\Delta(G,\nu,k)$ denote the first eigenvalue larger than $\lambda_0$ in the discrete spectrum with respect to the congruence subgroup $G$, weight $k$ and multiplier $\nu$. For weight 0, Selberg \cite{selberg} showed that $\lambda_\Delta(\Gamma(N),\boldsymbol{1},0)\geq \frac3{16}$ for all $N$ and Selberg's famous eigenvalue conjecture states that $\lambda_\Delta(G,\boldsymbol{1},0)\geq\frac14$ for all $G$.  
We introduce the hypothesis $H_\theta$ as
\begin{equation}\label{HTheta}
	H_\theta:\quad \lambda_\Delta(\Gamma_0(N),\boldsymbol{1},0)\geq \tfrac14-\theta^2\ \  \text{for all\ }N. 
\end{equation}
Selberg's conjecture includes $H_0$ and the best progress known today is $H_{\frac 7{64}}$ by \cite{KimSarnak764}. 
We denote $\lambda_\Delta(G,\nu,k)$ as $\lambda_\Delta$ when $(G,\nu,k)$ is clear from context.

Let $\LEigenform_{k}(N,\nu)\subset \Lform_k(N,\nu)$ denote the subspace spanned by eigenfunctions of $\Delta_k$. For each eigenvalue $\lambda$, we write 
\[\lambda=\tfrac14+r^2=s(1-s), \quad s=\tfrac12+ir,\quad r\in i(0,\tfrac14]\cup[0,\infty). \]
So $r\in i\R$ corresponds to $\lambda<\frac14$ and any such eigenvalue $\lambda\in(\lambda_0,\frac 14)$ is called an exceptional eigenvalue. Set
\begin{equation}\label{FirstSpecPara}
	r_\Delta(N,\nu,k)\defeq i\cdot\sqrt{\tfrac14-\lambda_\Delta(\Gamma_0(N),\nu,k)}. 
\end{equation}

Let $\LEigenform_{k}(N,\nu,r)\subset\LEigenform_{k}(N,\nu)$ denote the subspace corresponding to the spectral parameter $r$. Complex conjugation gives an isometry 
\[\LEigenform_{k}(N,\nu,r)\longleftrightarrow\LEigenform_{-k}(N,\overline\nu,r)\]
between normed spaces. 
For each $v\defeq v(z;k)\in \LEigenform_{k}(N,\nu,r)$, we have the Fourier expansion
\begin{equation}\label{Fourier expansion of Maass forms}
	v(z;k)=v(x+iy;k)=c_0(y)+\sum_{ \tilde{n}\neq 0} \rho(n) W_{\frac k2\sgn \tilde{n},\;ir}(4\pi| \tilde{n}|y)e( \tilde{n} x)
\end{equation}
where $W_{\kappa,\mu}$ is the Whittaker function and
\[c_0(y)=\left\{ 
\begin{array}{ll}
	0&\alpha_{\nu}\neq 0,\\
	0&\alpha_{\nu}=0 \text{ and } r\geq 0,\\
	\rho(0)y^{\frac12+ir}+\rho'(0)y^{\frac12-ir}&\alpha_{\nu}=0 \text{ and } r\in i(0,\frac14]. 
\end{array}
\right. 
\]
Using the fact that $W_{\kappa,\mu}$ is a real function when $\kappa$ is real and $\mu\in \R\cup i\R$ \cite[(13.4.4), (13.14.3), (13.14.31)]{dlmf}, if we denote the Fourier coefficient of $v_c\defeq \bar v$ as $\rho_c(n)$, then
\begin{equation}\label{FourierCoeffConj}
	\rho_c(n)=\left\{\begin{array}{ll}
		\overline{\rho(1-n)},&	\alpha_{\nu}>0,\ n\neq 0\\
		\overline{\rho(-n)}, &\alpha_{\nu}=0. 
	\end{array}
	\right.\end{equation}
Moreover, for the Maass operators $R_k$ \eqref{Maass Raising op} and $L_k$ \eqref{Maass Lowering op}, if we define $\lambda(s)\defeq s(1-s)$ for $s\in \C$, then 
\begin{align}
	&\langle R_k v,R_k v\rangle =\(\tfrac 14+r^2+\tfrac{k(2+k)}4\)\langle v,v\rangle,\\
	&\langle L_k v,L_k v\rangle =\(\tfrac 14+r^2-\tfrac{k(2-k)}4\)\langle v,v\rangle. 
\end{align}
Therefore, when $\frac 14+r^2\neq \frac{k(2-k)}4$, the map
\begin{equation}\label{LkBijIso}
	\(\tfrac 14+r^2-\tfrac{k(2-k)}4\)^{-\frac 12}L_k:\ \LEigenform_k(N,\nu,r)\rightarrow \LEigenform_{k-2}(N,\nu,r)
\end{equation}
is a bijective isometry. The reader may refer to \cite[\S4]{DFI2002} for the case when $k\in \Z$, but the calculations for \eqref{LkBijIso} work for $k\in \Z+\frac 12$.

\subsection{Holomorphic cusp forms of half-integral weight}
For positive integers $N,l$ and a weight $k\in \Z+\frac12$ multiplier $\nu$ on $\Gamma_0(N)$, we know that  $\nu$ is also a weight $k+2l$ multiplier system on $\Gamma_0(N)$. For simplicity we denote $K=k+2l\in \Z+\frac12$.  Let $S_{K}(N ,\nu)$ (resp. $M_K(N,\nu)$) be the space of holomorphic cusp forms (resp. modular forms) on $\Gamma_0(N)$ which satisfy the transformation law 
\[F(\gamma z)=\nu(\gamma)(cz+d)^{K} F(z),\quad \gamma\in \Gamma_0(N).\]

The inner product on $S_{K}(N,\nu)$ is defined as
\[\langle F,G\rangle_H\defeq \int_{\Gamma_0(N)\setminus \HH} F(z)\overline{G(z)}y^{K}\frac{dxdy}{y^2},\quad f,g\in S_{K}(N,\nu).  \]
It is known that (see e.g. \cite[\S 5]{rankin_1977_book}) $S_{K}(N,\nu)$ is a finite-dimensional Hilbert space under the inner product $\langle\cdot,\cdot \rangle_H$.
If we take an orthonormal basis $\{F_j(\cdot):\ 1\leq j\leq d\defeq \dim S_K(N,\nu)\}$ of $S_K(N,\nu)$ and write the Fourier expansion of $F_j$ as 
\[F_j(z)=\sum_{n=1}^\infty a_j(n)e(\tilde n z),\] 
then the sum
\begin{equation}\label{Holomorphic forms: Fourier coeff independent of choice}
	\frac{\Gamma(K-1)}{(4\pi\tilde  n)^{K-1}}\sum_{j=1}^d |a_j(n)|^2=1+2\pi i ^{-K}\sum_{N|c}\frac{S(n,n,c,\nu)}c J_{K-1}\(\frac{4\pi \tilde n}c\)
\end{equation}
is independent of the choice of the basis. 

There is a one-to-one correspondence between all $f\in\Lform_k(N,\nu)$ with eigenvalue $\lambda_0$ and weight $k$ holomorphic modular forms $F$ by
\begin{equation}\label{CuspFormR0} 
	f(z)=\left\{\begin{array}{ll}
		y^{\frac k2}F(z)\quad & k\geq 0,\;\  F\in M_k(N,\nu),\\
		y^{-\frac k2}\overline{F(z)}\quad & k<0,\;\  F\in M_{-k}(N,\overline{\nu}). 
	\end{array}
	\right. 	\end{equation}
If we write the Fourier expansion of $f$ as $f(z)=\sum_{n\in \Z} a_y(n)e(\tilde n x)$, then
\begin{equation}\label{Coeffi CuspFormR0} 
	\left\{
	\begin{array}{lll}
		k\geq 0 &\Rightarrow& a_y(n)=0 \text{\ for\ } \tilde n<0,\\
		k<0 &\Rightarrow& a_y(n)=0 \text{\ for\ } \tilde n>0.
	\end{array}
	\right.
\end{equation}

\section{Trace formula}

Let $k\in \Z+ \frac12$, $N$ be a positive integer, and $\mathfrak{a}$ be a singular cusp for the weight $k$ multiplier system $\nu$ on $\Gamma=\Gamma_0(N)$. Define the Eisenstein series associated to $\mathfrak{a}$ by
\begin{equation}\label{EisensteinSeries}
	E_{\mathfrak{a}}(z,s)\defeq\sum_{\gamma\in\Gamma_{\mathfrak{a}}\setminus \Gamma }\overline{\nu(\gamma)w(\sigma_{\mathfrak{a}}^{-1},\gamma)}(\im \sigma_{\mathfrak{a}}^{-1}\gamma z)^s j(\sigma_{\mathfrak{a}}^{-1}\gamma,z)^{-k}.
\end{equation}
For the Maass lowering operator $L_k$ \eqref{Maass Lowering op}, if we let $E_{\mathfrak{a}}'(z,s)$ denote the right hand side of \eqref{EisensteinSeries} with $k$ replaced by $k-2$, then by \cite[(4.48)]{DFI2002} we have
\begin{equation}\label{LkBijEisenstein}
	L_k E_{\mathfrak{a}}(z,s)=(\tfrac k2-s)E_{\mathfrak{a}}'(z,s). 
\end{equation}
Although \cite[\S4]{DFI2002} assumes $k\in \Z$, the verification of \cite[(4.48)]{DFI2002} remains valid for $k\in \Z+\frac 12$. 

We also define the Poincar\'e series for $m>0$ by
\[\U_m(z,s)\defeq\sum_{\gamma\in\Gamma_\infty\setminus \Gamma }\overline\nu(\gamma) (\im \gamma z)^s j(\gamma,z)^{-k}e( \tilde{m} \gamma z).\] 
Both of them are automorphic functions of weight $k$. The properties of these two series can be found in \cite{Proskurin2005}. The Fourier expansion of $\U_m$ is
\begin{equation}\label{FourierExpPcl}
	\U_m(x+iy,s)=y^s e( \tilde{m}z )+y^s\sum_{\ell\in\Z}\sum_{c>0}\frac{S(m,\ell,c,\nu)}{c^{2s}}B(c, \tilde{m}, \tilde{\ell},y,s,k)e( \tilde{\ell} x) 
\end{equation}
where the function $B$ is in \cite[(4.5)]{AAimrn}. When $\re s>1$, $\U_m(\cdot,s)\in \Lform_k(N,\nu)$.
The Fourier expansion of $\Ea$ at $s=\frac12+ir$ for $r\in \R$ is
\begin{equation}
	\begin{alignedat}{2} \label{FourierExpEsst}
		\Ea(x+iy,s)&=\delta_{\mathfrak{a}\infty}y^s & &+\rho_{\mathfrak{a}}(0,r)y^{1-s}+\sum_{\ell\neq 0}\rho_{\mathfrak{a}}(\ell,r)W_{\frac k2\sgn \tilde \ell,\;\frac12-s}(4\pi | \tilde{\ell}|y)e( \tilde{\ell} x)\\
		&=\delta_{\mathfrak{a}\infty}y^s& &+\frac{\delta_{\alpha_{\nu}0}\cdot4^{1-s}\Gamma(2s-1)}{e^{\pi ik/2 
			}\Gamma(s+\frac k2)\Gamma(s-\frac k2)}y^{1-s}\varphi_{\mathfrak{a}0}(s)\\
		&\ & &+\sum_{\ell\neq 0}|\tilde{\ell}|^{s-1} \frac{\pi^s W_{\frac k2\sgn \tilde{\ell},\;\frac12-s}(4\pi | \tilde{\ell}|y)}{e^{\pi ik/2}\Gamma(s+\frac k2 \sgn\tilde{\ell})}\varphi_{\mathfrak{a}\ell}(s)e( \tilde{\ell} x). 
	\end{alignedat}
\end{equation}

Suppose $m$ and $n$ are positive integers and recall the definition of $\tilde n$ in \S\ref{Backgroud 2.2}. The following notations are very important in the remaining part of this paper: 
\begin{setting}\label{conditionATDelta}
	Let $a=4\pi \sqrt{\tilde{m}\tilde{n}}>0$ and $0<T\leq x/3$ with $T\asymp x^{1-\delta}$ where $\delta\in(0,\frac12)$ will be finally taken as $\frac 13$.
\end{setting}  
\begin{setting}\label{conditionphi}
	The test function $\phi\defeq \phi_{a,x,T}:[0,\infty)\rightarrow\R$ is a four times continuously differentiable function which satisfies
	\begin{enumerate} 
		\item $\phi(0)=\phi'(0)=0$ and 
			$\phi^{(j)}(x)\ll_\ep x^{-2+\ep}$ ($j=0,\cdots,4$) as $x\rightarrow\infty$.
		\item $\phi(t)=1$ for $\frac{a}{2x}\leq t\leq \frac{a}{x}$. 
		\item $\phi(t)=0$ for $t\leq \frac{a}{2x+2T}$ and $t\geq \frac a{x-T}$.
		\item $\phi'(t)\ll\(\frac a{x-T}-\frac ax\)^{-1}\ll \frac{x^2}{aT}$.
		\item $\phi$ and $\phi'$ are piecewise monotone on a fixed number of intevals.  
	\end{enumerate}
\end{setting}
This test function was used in \cite{SarnakTsimerman09} and \cite{AAimrn}. We also define the following transformations of $\phi$:  
\begin{equation}\label{widetildePhi}
	\widetilde{\phi}(r)=\int_0^\infty J_{r-1}(u)\phi(u)\frac{du}u
\end{equation}
and for $k\geq 0$,
\begin{equation}\label{widehatPhi}
	\widehat{\phi}(r)\defeq\pi^2 e^{\frac{(1+k)\pi i}2}\frac{\int_0^\infty \(\cos(\frac {k\pi}2+\pi i r)J_{2ir}(u)-\cos(\frac {k\pi}2-\pi i r)J_{-2ir}(u)\)\phi(u)\frac{du}u}{\sh(\pi r)(\ch(2\pi r)+\cos \pi k)\Gamma(\frac12-\frac k2+i r)\Gamma(\frac12-\frac k2-i r)} 
\end{equation}
with the corrected version of \cite[(2.12)]{BlomerSumofHeckeEvaluesOverQP}
\begin{equation}\label{widehatPhi r=i/4}
	\widehat{\phi}\(\tfrac i4\)=\left\{
	\begin{array}{ll}
		e^{\frac{\pi i}4}\int_0^\infty \cos(u)\phi(u) u^{-\frac 32}du&\ \  k=\frac 12, \vspace{4px}\\
		\frac{1}{2}e^{\frac{3\pi i}4}\int_0^\infty \sin(u)\phi(u) u^{-\frac 32}du&\ \  k=\frac 32.
	\end{array}
	\right.
\end{equation}

For an integer $l\geq 1$, let $B_l$ denote an orthonormal basis for the space of holomorphic cusp forms $S_{k+2l}(N,\nu)$ and 
\[\mathscr{B}_k\defeq\bigcup_{l=1}^\infty B_l.\]
Suppose that the Fourier expansion of each $F\in\mathscr{B}_k$ is given by 
\begin{equation}\label{holomphFourierCoeff}
	F(z)\defeq\sum_{n=1}^\infty a_F(n) e(\tilde{n}z). 
\end{equation}
Let $w_F$ denote the weight of $F\in\mathscr{B}_k$. 
Here is the trace formula: 
\begin{theorem}[{\cite[\S6 Theorem]{Proskurin2005}}]\label{traceFormula Theorem}
	Suppose $\nu$ is a multiplier system of weight $k=\frac12$ or $\frac 32$ on $\Gamma$. Let $\{v_j(\cdot)\}$ be an orthonormal basis of $\LEigenform_k(N,\nu)$ and $\Ea(\cdot,s)$ be the Eisenstein series associated to a singular cusp $\mathfrak{a}$. Let $\rho_j(n)$ denote the $n$-th Fourier coefficient of $v_j$. Let $\varphi_{\mathfrak{a}n}(\frac12+ir)$ or $\rho_{\mathfrak{a}}(n,r)$ denote the $n$-th Fourier coefficient of $\Ea(\cdot,\frac12+ir)$ as in \eqref{FourierExpEsst}. Let $\mathscr{B}_k$ and $a_F(n)$ be defined as in \eqref{holomphFourierCoeff}. Then for $\tilde{m}>0$ and $\tilde{n}>0$ we have
	\begin{align}\label{traceFormula}
		\sum_{c>0}\frac{S(m,n,c,\nu)}{c} \phi\(\frac{4\pi\sqrt{ \tilde{m}  \tilde{n}}}{c}\)
		=\mathcal U_k+\mathcal W
		+\!\!\!\sum_{\mathrm{singular\ }\mathfrak{a}}\!\!\!\mathcal{E}_\mathfrak{a},
	\end{align}
	where
	\begin{equation*}
		\mathcal U_k=\sum_{F\in \mathscr{B}_k}\frac{4\Gamma(w_F)e^{\pi i w_F/2}}{(4\pi)^{w_F}(\tilde{m}\tilde{n})^{(w_F-1)/2}}\overline{a_F(m)}a_F(n)\widetilde{\phi}(w_F),
	\end{equation*}
	\begin{equation*}
		\mathcal W=4\sqrt{ \tilde{m}  \tilde{n}} \sum_{r_j}\frac{\overline{\rho_j(m)}\rho_j(n)}{\ch \pi r_j}\widehat{\phi}(r_j),
	\end{equation*}
	and
	\begin{align*}
		\mathcal{E}_\mathfrak{a}&= \int_{-\infty}^\infty\(\frac{ \tilde{m}}{ \tilde{n}}\)^{-ir}\frac{\overline{\varphi_{\mathfrak{a}m}\(\frac12+ir\)}\varphi_{\mathfrak{a}n}\(\frac12+ir\) \widehat{\phi}(r) dr}{\ch \pi r\;|\Gamma(\frac12+\frac k2 +ir)|^2} \\
		&=4\sqrt{ \tilde{m}  \tilde{n}}\cdot\frac{1}{4\pi}\int_{-\infty}^\infty  \overline{\rho_{\mathfrak{a}}\(m,r\)}\rho_{\mathfrak{a}}\(n,r\)\frac{ \widehat{\phi}(r) }{\ch \pi r} dr. 
	\end{align*}
\end{theorem}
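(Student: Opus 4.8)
The plan is to follow the classical derivation of the Kuznetsov/Bruggeman–Proskurin trace formula, adapted to real weight and multiplier: compute the Petersson inner product of two Poincar\'e series $\langle \U_m(\cdot,s_1),\U_n(\cdot,s_2)\rangle$ on $\Gamma\backslash\HH$ in two ways, working first in the region $\re s_1,\re s_2>1$ where both series lie in $\Lform_k(N,\nu)$ and everything converges absolutely, and then analytically continuing in $s$ and specializing to produce a general test function $\phi$.

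For the geometric side I would unfold one Poincar\'e series against the Fourier expansion \eqref{FourierExpPcl} of the other. Decomposing $\Gamma$ into $\Gamma_\infty$-double cosets, the unfolding produces a diagonal term (from $\gamma\in\Gamma_\infty$) plus $\sum_{c>0}\frac{S(m,n,c,\nu)}{c^{s_1+s_2}}$ times an explicit archimedean integral: a product of two Whittaker-type factors integrated against $y^{s_1+s_2-2}\,dy/y$, which collapses to a Bessel function by a standard Mellin–Barnes/Gegenbauer evaluation. One then integrates the kernel against a suitable weight in the $s$-variable along a vertical contour (equivalently, takes an appropriate combination of kernels) to replace the Bessel kernel by $\phi(4\pi\sqrt{\tilde m\tilde n}/c)$; the hypotheses of Setting~\ref{conditionphi} — $\phi$ four times continuously differentiable, supported away from $0$, with controlled derivatives — are exactly what licenses the contour shifts and the absolute convergence of the resulting $c$-sum.

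For the spectral side I would insert the spectral decomposition of $\Lform_k(N,\nu)$: the discrete part spanned by the orthonormal eigenbasis $\{v_j\}$ of $\LEigenform_k(N,\nu)$ (Maass cusp forms together with the residual spectrum), the continuous part given by the Eisenstein series $\Ea(\cdot,\tfrac12+ir)$ integrated over singular cusps $\mathfrak a$, and — the feature specific to half-integral weight — a contribution of holomorphic cusp forms. The latter appears because the bottom eigenvalue $\lambda_0=\tfrac{|k|}2(1-\tfrac{|k|}2)$ is attained precisely by the forms in \eqref{CuspFormR0}, and more generally, iterating the Maass lowering/raising operators relates the weight-$k$ Poincar\'e series to holomorphic cusp forms of weights $k+2l$, $l\ge1$, producing the basis $\mathscr{B}_k$ and the term $\mathcal U_k$. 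Expanding $\langle\U_m(\cdot,s_1),v_j\rangle$ (and the analogous Eisenstein and holomorphic pairings) by unfolding once more against the Fourier expansion \eqref{Fourier expansion of Maass forms}, one obtains $\overline{\rho_j(m)}$ (resp. $\overline{a_F(m)}$, $\overline{\varphi_{\mathfrak a m}}$) times a Whittaker integral evaluating to gamma factors. Setting $s_1=s_2=s$, continuing analytically, and applying the same contour manipulation as on the geometric side turns the gamma-factor kernels into $\widehat\phi(r_j)$, $\widetilde\phi(w_F)$, and the Eisenstein integral of $\widehat\phi$, matching \eqref{widetildePhi}--\eqref{widehatPhi}.

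I expect the main obstacle to be twofold. First, justifying rigorously the interchange of spectral summation with the $s$-contour integration and the analytic continuation: this needs a priori input — a Weyl law for the discrete spectrum, polynomial bounds on the coefficients $\rho_j(n)$, $a_F(n)$, and decay of $\widehat\phi,\widetilde\phi$ in the spectral parameter — so that the spectral side is absolutely convergent, and in the case $k\notin\Z$ the Bessel/Whittaker asymptotics and the behaviour of $\ch\pi r$, $\sh\pi r$ and the $\cos\pi k$ terms in \eqref{widehatPhi} must be tracked with care (this is where the corrections cited in the excerpt enter). Second, pinning down the archimedean transforms at degenerate points — in particular the value \eqref{widehatPhi r=i/4} at $r=i/4$, corresponding to eigenvalue $\tfrac3{16}$ and interacting with holomorphic weight-$\tfrac12$ (resp. $\tfrac32$) cusp forms via \eqref{CuspFormR0} — which does not follow from the generic Bessel evaluation and requires a separate limiting computation.
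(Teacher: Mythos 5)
The first thing to note is that the paper does not prove this theorem at all: it is imported wholesale from Proskurin \cite[\S6 Theorem]{Proskurin2005} (with the normalization and the correction \eqref{widehatPhi r=i/4} taken from the cited literature), so there is no internal proof to compare against. Measured against the derivation in the cited source, your outline has the right overall architecture — compute $\langle \U_m(\cdot,s_1),\U_n(\cdot,s_2)\rangle$ by unfolding on the geometric side (diagonal term plus Kloosterman sums against an archimedean Bessel kernel) and by spectral decomposition on the other, then pass from the special kernel to a general test function $\phi$ — and you correctly flag the convergence issues and the degenerate point $r=\frac i4$ as the places where care is needed.

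There is, however, one substantive error in the proposed strategy: your account of where $\mathcal U_k$ comes from. The holomorphic cusp forms of weight $w_F=k+2l\geq\frac52$ are \emph{not} part of the spectral decomposition of $\Lform_k(N,\nu)$: the associated functions $y^{(k+2l)/2}F(z)$ have Laplace eigenvalue $\frac{k+2l}2(1-\frac{k+2l}2)<0<\lambda_0$ for $l\geq1$, hence lie outside the $L^2$-spectrum of $\Delta_k$, and applying Maass lowering/raising operators to the weight-$k$ Poincar\'e series does not make them appear on the spectral side. (Only the weight-$k$ holomorphic forms enter the spectral decomposition, at the bottom eigenvalue via \eqref{CuspFormR0}, and those are already accounted for inside $\mathcal W$ at $r_j=\frac i4$.) In the actual derivation, $\mathcal U_k$ arises at the \emph{inversion} step: the kernels $J_{\pm 2ir}$ attached to the continuous and exceptional spectrum do not form a complete system, and the Sears--Titchmarsh completion requires adjoining the discrete modes $J_{w-1}$ with $w=k+2l$; the coefficients of $\phi$ along these modes, namely $\widetilde\phi(w_F)$, are then converted into Fourier coefficients of holomorphic forms by the Petersson formula (cf. \eqref{Holomorphic forms: Fourier coeff independent of choice}). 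Without this completeness/inversion argument the strategy you describe cannot produce the term $\mathcal U_k$, so as written the proposal has a genuine gap at exactly that point.
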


\begin{remark}
	We clarify two points in the theorem. 	
	\begin{itemize}		
		\item[(1)] In the term $\mathcal{U}_k$ corresponds to holomorphic cusp forms, each function $F\in \mathscr{B}_k$ has weight $w_F=k+2l\geq \frac 52$.  
		
		\item[(2)] The equality of the two expressions in $\mathcal{E}_\mathfrak{a}$ is by \eqref{FourierExpEsst}: 
		\[\sqrt{\frac{\tilde{n}}{\pi}}\,\rho_\mathfrak{a}(n,r)=
		\frac{e^{-\frac{\pi i k}2} \pi^{ir} \tilde{n}^{ir}}{\Gamma\(\frac12+ir+\frac k2\sgn  \tilde{n}\)}\varphi_{\mathfrak{a}n}\(\frac12+ir\). \]
	\end{itemize}
\end{remark}

\subsection{Properties of admissible multipliers}
Suppose $\nu$ is a weight $k$ admissible multiplier system on $\Gamma=\Gamma_0(N)$ (Definition~\ref{Admissibility}) with parameters $B$, $M$ and $D$. Then we have the following propositions. 
\begin{proposition}\cite[Proposition~5.7]{QihangFirstAsympt}\label{specParaBoundWeightHalf}
	Suppose that $\nu$ satisfies condition (1) of Definition~\ref{Admissibility} and assume $H_\theta$ \eqref{HTheta}. Then we have
	$2\im r_\Delta\(N,\nu,k\)\leq\theta. $
\end{proposition}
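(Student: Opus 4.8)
The plan is to transport the problem to the weight $0$ setting, where \eqref{HTheta} is available, using the level-lifting hypothesis together with the theta (Shimura) correspondence. First I would apply condition (1)(i) of Definition~\ref{Admissibility}: the operator $\mathscr{L}\colon f\mapsto f(Bz)$ injects the square-integrable weight $k$ eigenforms of $\Delta_k$ on $(\Gamma_0(N),\nu)$ into those on $(\Gamma_0(M),\nu')$ and keeps the eigenvalue, so every eigenvalue in the discrete spectrum on $(\Gamma_0(N),\nu)$ also occurs on $(\Gamma_0(M),\nu')$; since $\lambda_0=\lambda_0(k)$ depends only on $k$, this gives $\lambda_\Delta(\Gamma_0(N),\nu,k)\ge\lambda_\Delta(\Gamma_0(M),\nu',k)$, hence $\im r_\Delta(N,\nu,k)\le\im r_\Delta(M,\nu',k)$. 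It therefore suffices to prove $2\im r_\Delta(M,\nu',k)\le\theta$ for the single explicit multiplier $\nu'=(\tfrac{|D|}{\cdot})\nu_\theta^{2k}$ on $\Gamma_0(M)$, $4\mid M$. Writing $\chi=(\tfrac{|D|}{\cdot})$, a quadratic character with $\chi^2=\boldsymbol{1}$, complex conjugation --- the isometry $\LEigenform_k(M,\nu',r)\leftrightarrow\LEigenform_{-k}(M,\overline{\nu'},r)$ of \S2, with $\overline{\nu'}=\chi\nu_\theta^{-2k}$ since $\chi$ is real --- reduces the case $k=-\tfrac12$ to $k=\tfrac12$, so I may assume $k=\tfrac12$ and $\nu'=\chi\nu_\theta$. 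I may also assume $\Delta_{1/2}$ has an eigenvalue $\lambda^{*}\in(\lambda_0,\tfrac14)$ on $(\Gamma_0(M),\chi\nu_\theta)$ --- otherwise $\im r_\Delta=0$ and there is nothing to prove --- and such a $\lambda^{*}$ is cuspidal, the residual discrete spectrum here contributing at most at $\lambda_0=\tfrac3{16}$.

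Next I would invoke the theta correspondence for $(\Gamma_0(M),\chi\nu_\theta)$, in the spirit of Katok--Sarnak: to a weight $\tfrac12$ Maass cusp form with Laplace eigenvalue $\tfrac14+r^2$ it attaches a weight $0$ Maass cusp form on some $\Gamma_0(M')$ with Laplace eigenvalue $\tfrac14+(2r)^2 = 4(\tfrac14+r^2)-\tfrac34$ and --- because in Shimura's dictionary a half-integral-weight nebentypus $\chi$ produces the integral-weight nebentypus $\chi^2=\boldsymbol{1}$ --- with \emph{trivial} character; the normalization is the one for which the residual form at $\lambda_0=\tfrac3{16}$ corresponds to the constant function at eigenvalue $0$ (consistently with $4\cdot\tfrac3{16}-\tfrac34=0$). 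Applied to $\lambda^{*}$ this produces a weight $0$, trivial-character eigenvalue $4\lambda^{*}-\tfrac34\in(0,\tfrac14)$ on $\Gamma_0(M')$ (since $\lambda^{*}\in(\tfrac3{16},\tfrac14)$), so \eqref{HTheta} yields $4\lambda^{*}-\tfrac34\ge\tfrac14-\theta^2$, i.e. $\lambda^{*}\ge\tfrac14-\tfrac{\theta^2}4$. Hence $\im r_\Delta(M,\nu',\tfrac12)=\sqrt{\tfrac14-\lambda^{*}}\le\tfrac\theta2$, and combined with the first paragraph this gives $2\im r_\Delta(N,\nu,k)\le\theta$.

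The main obstacle is the theta correspondence in exactly this generality --- $\Gamma_0(M)$ for an arbitrary multiple $M$ of $4$, twisted by $\chi=(\tfrac{|D|}{\cdot})$ with $D$ an even fundamental discriminant --- and carefully tracking the doubling $r\mapsto 2r$ of the spectral parameter, since the entire gain from $\tfrac14-\theta^2$ to $\tfrac14-\tfrac{\theta^2}4$ rests on that factor of $2$ (together with $\chi^2=\boldsymbol{1}$, which keeps us in the trivial-nebentypus weight $0$ spectrum that \eqref{HTheta} governs). If a reference in this generality is not at hand, one can instead derive the required comparison directly from the Proskurin/Kuznetsov trace formula of Section~3, matching its spectral side on $(\Gamma_0(M),\chi\nu_\theta)$ against the weight $0$ Kuznetsov formula and using the explicit, Salié-type evaluation of half-integral-weight Kloosterman sums with theta multiplier to pair up the arithmetic sides, which forces the spectral measures to correspond under $r\mapsto 2r$.
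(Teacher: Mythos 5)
Your argument is correct and is essentially the proof the paper relies on (via the citation to Proposition~5.7 of \cite{QihangFirstAsympt}): condition (1)(i) transports the exceptional spectrum to $(\Gamma_0(M),(\tfrac{|D|}{\cdot})\nu_\theta^{2k})$ without changing eigenvalues, conjugation reduces $k=-\tfrac12$ to $k=\tfrac12$, and the Katok--Sarnak/Shimura-type correspondence (in the twisted, higher-level form due to Baruch--Mao/Bir\'o, as used by Ahlgren--Andersen) sends a cuspidal eigenvalue $\tfrac14+r^2$ to a weight $0$, trivial-nebentypus eigenvalue $\tfrac14+(2r)^2$, whence $H_\theta$ gives $2\im r_\Delta\leq\theta$. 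The only point you gloss over is the non-vanishing of the theta lift on the exceptional eigenspace, which is part of the standard statement of that correspondence and is exactly the external input the cited proof also invokes.
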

\begin{proposition}\label{Level lifting: holo cusp forms, compare}
	Suppose that $\nu$ satisfies condition (1) of Definition~\ref{Admissibility} with $\nu'=(\frac{|D|}\cdot)\nu_\theta^{2k}$. For $l\in \Z$, let $K=k+2l\geq \frac 52$.  Suppose $\{F_{j,l}(\cdot)\}_j$ is an orthonormal basis of $S_{K}(N,\nu)$ and $\{G_{j,l}(\cdot)\}_j$ is an orthonormal basis of $S_{K}(M,\nu')$. Denote $a_{F,j,l}(n)$ as the Fourier coefficient of $F_{j,l}$ and $a_{G,j,l}(n)$ as the Fourier coefficient of $G_{j,l}$. Then we have
	\begin{equation}\label{Level lifting: holo cusp forms, compare, equation}
		\sum_{j=1}^{\dim S_{K}(N,\nu)}|a_{F,j,l}(n)|^2\ll_{N,\nu} \sum_{j=1}^{\dim S_{K}(M,\nu')}|a_{G,j,l}(B\tilde n)|^2. 
	\end{equation}
\end{proposition}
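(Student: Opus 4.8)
The plan is to exploit the basis-independence of the left-hand quantity, as recorded in \eqref{Holomorphic forms: Fourier coeff independent of choice}, together with the level-lifting injection $\mathscr{L}\colon F\mapsto F(Bz)$ furnished by condition (1)(ii) of Definition~\ref{Admissibility}. First I would observe that $\sum_{j}|a_{F,j,l}(n)|^2$ depends only on the space $S_K(N,\nu)$ and not on the orthonormal basis chosen: indeed, if $\{F_{j,l}\}_j$ and $\{F'_{j,l}\}_j$ are two orthonormal bases related by a unitary matrix $U$, then the vectors $(a_{F,j,l}(n))_j$ and $(a_{F',j,l}(n))_j$ are related by $U$ as well, so their squared $\ell^2$-norms agree. (This is exactly the content underlying \eqref{Holomorphic forms: Fourier coeff independent of choice}, where the sum is identified with a trace and shown to equal a Kloosterman-sum expression that is manifestly intrinsic.) Hence I am free to pick whichever bases are most convenient on each side.

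The key step is then to compare the two spaces through $\mathscr{L}$. By condition (1)(ii), $\mathscr{L}$ is an injection $S_K(N,\nu)\hookrightarrow S_K(M,\nu')$, and $(\mathscr{L}F)(z)=F(Bz)$ has Fourier expansion $\sum_{n\ge1}a_F(n)e(\tilde n\,Bz)$, so the $(B\tilde n)$-th Fourier coefficient of $\mathscr{L}F$ equals $a_F(n)$ (and the coefficients of $\mathscr{L}F$ at frequencies not of the form $B\tilde n$ vanish). I would start from the orthonormal basis $\{F_{j,l}\}_j$ of $S_K(N,\nu)$, push it forward to the linearly independent family $\{\mathscr{L}F_{j,l}\}_j$ inside $S_K(M,\nu')$, and apply Gram--Schmidt with respect to $\langle\cdot,\cdot\rangle_H$ on $S_K(M,\nu')$ to obtain an orthonormal family $\{H_{j,l}\}_j$ spanning $W\defeq\mathscr{L}\big(S_K(N,\nu)\big)$. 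Writing the change of basis from $\{\mathscr{L}F_{j,l}\}_j$ to $\{H_{j,l}\}_j$, unitarity of the orthonormalization on the subspace $W$ (the Gram matrix of $\{\mathscr{L}F_{j,l}\}$ has bounded-below smallest eigenvalue only if the $\mathscr{L}F_{j,l}$ are close to orthonormal, which is \emph{not} automatic) — here is the subtlety: $\mathscr{L}$ need not be an isometry, so $\{\mathscr{L}F_{j,l}\}_j$ is merely a basis of $W$, not an orthonormal one. To handle this cleanly I would instead argue directly: for the orthogonal projection $\Pi_W$ onto $W$ in $S_K(M,\nu')$, and any orthonormal basis $\{G_{j,l}\}_j$ of $S_K(M,\nu')$, Bessel/Parseval gives
\[
\sum_{j}|a_{G,j,l}(B\tilde n)|^2=\big\|\,\mathrm{ev}_{B\tilde n}\,\big\|^2_{S_K(M,\nu')^{\vee}}\ge \big\|\,\mathrm{ev}_{B\tilde n}|_W\,\big\|^2 = \sum_{j}|a_{H,j,l}(B\tilde n)|^2,
\]
where $\mathrm{ev}_{B\tilde n}$ is the linear functional "$(B\tilde n)$-th Fourier coefficient" and $\{H_{j,l}\}_j$ is \emph{any} orthonormal basis of $W$; the inequality is just that restricting a functional to a subspace does not increase its norm, and the Riesz-representation norm of a functional equals the $\ell^2$-norm of its coordinates in any orthonormal basis. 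So it remains to bound $\sum_{j}|a_{F,j,l}(n)|^2$ by $\sum_{j}|a_{H,j,l}(B\tilde n)|^2$ up to $\ll_{N,\nu}1$. Since $a_{H,j,l}(B\tilde n)$ is (by the coefficient-transfer above and basis-independence on $W$) nothing but the $(B\tilde n)$-th coefficient computed in the basis $\mathscr{L}F_{j,l}$ re-orthonormalized, and $\mathscr{L}$ carries the frequency-$\tilde n$ coefficient on $\Gamma_0(N)$ to the frequency-$B\tilde n$ coefficient on $\Gamma_0(M)$ bijectively on $W$, the two traces differ only by the distortion of the inner product under $\mathscr{L}$, i.e.\ by the operator norms of $\mathscr{L}$ and of the inclusion $W\hookrightarrow S_K(M,\nu')$ relative to $\langle\cdot,\cdot\rangle_H$. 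These are $O_{N,\nu}(1)$: the map $z\mapsto Bz$ pulls the hyperbolic measure $y^{K-2}dx\,dy$ back by a bounded factor depending only on $B$ (hence on $N,\nu$ via Definition~\ref{Admissibility}), so $\|\mathscr{L}F\|_H\asymp_{N,\nu}\|F\|_H$ uniformly in $l$ and $F$.

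Putting these together yields
\[
\sum_{j=1}^{\dim S_K(N,\nu)}|a_{F,j,l}(n)|^2\;\ll_{N,\nu}\;\sum_{j=1}^{\dim S_K(N,\nu)}|a_{H,j,l}(B\tilde n)|^2\;\le\;\sum_{j=1}^{\dim S_K(M,\nu')}|a_{G,j,l}(B\tilde n)|^2,
\]
which is \eqref{Level lifting: holo cusp forms, compare, equation}. The main obstacle, and the step deserving the most care, is the middle inequality: verifying that the constant in $\|\mathscr{L}F\|_H\asymp_{N,\nu}\|F\|_H$ is genuinely uniform in the weight $K=k+2l$ as $l\to\infty$ — one must check that the factor by which the substitution $z\mapsto Bz$ distorts $\int_{\Gamma\backslash\HH}|F|^2 y^{K}\frac{dx\,dy}{y^2}$ does not grow with $K$. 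This follows because on a fundamental domain the distortion of the integrand is a pointwise factor of $B^{K}\cdot(\text{Jacobian})$ combined with the change of fundamental domain, and the $B^{K}$ is exactly cancelled when one also rescales the Fourier coefficients $a_F(n)\leftrightarrow a_{\mathscr{L}F}(B\tilde n)$ consistently; I would make this explicit by writing both sides of \eqref{Holomorphic forms: Fourier coeff independent of choice} for $F$ and for $\mathscr{L}F$ and tracking the powers of $B$, so that the claimed $\ll_{N,\nu}$ (independent of $l$ and $n$) drops out. Once that uniformity is pinned down, the rest is the soft functional-analytic comparison sketched above.
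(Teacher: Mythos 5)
Your proposal follows the same route as the paper's proof: push the orthonormal basis of $S_K(N,\nu)$ into $S_K(M,\nu')$ via $\mathscr{L}$, use the basis-independence of $\sum_j|a_j(B\tilde n)|^2$ (your evaluation-functional formulation is equivalent to the paper's ``extend to an orthonormal basis of $S_K(M,\nu')$ and drop the extra nonnegative terms''), and transfer the $\tilde n$-th coefficient to the $B\tilde n$-th one. The paper is shorter only because it takes from condition (1) the stronger fact that the index-normalized images $[\Gamma_0(N):\Gamma_0(M)]^{-\frac12}F_{j,l}(Bz)$ already form an orthonormal subset of $S_K(M,\nu')$, so no re-orthonormalization or operator-norm discussion is required.

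The one step of yours that is imprecise --- and it is exactly the step you flagged as delicate --- is the norm comparison. The map $\mathscr{L}$ is in fact a scalar multiple of an isometry: the substitution $w=Bz$ gives $\langle \mathscr{L}F,\mathscr{L}G\rangle_H=c_K\langle F,G\rangle_H$ with $c_K=B^{-K}\,[\Gamma_0(N):\beta\Gamma_0(M)\beta^{-1}]$ and $\beta=\mathrm{diag}(\sqrt B,1/\sqrt B)$, so the images of an orthonormal basis are automatically orthogonal with equal norms and your Gram--Schmidt worry is vacuous. But by the same token your claim that $\|\mathscr{L}F\|_H\asymp_{N,\nu}\|F\|_H$ \emph{uniformly in $l$} is false for $B>1$ (the ratio decays like $B^{-K/2}$), and the ``cancellation against a rescaling of the Fourier coefficients'' you invoke does not occur: $a_{\mathscr{L}F}(B\tilde n)=a_F(n)$ exactly, with no rescaling. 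What saves the argument is that only the one-sided bound $\|\mathscr{L}\|_{\mathrm{op}}^2=c_K\ll_{N,\nu}1$ is needed. Since $\mathrm{ev}_n=\mathrm{ev}_{B\tilde n}\circ\mathscr{L}$ as functionals, one gets
\[
\sum_{j}|a_{F,j,l}(n)|^2=\|\mathrm{ev}_n\|^2\le c_K\,\bigl\|\mathrm{ev}_{B\tilde n}|_W\bigr\|^2\le c_K\sum_{j}|a_{G,j,l}(B\tilde n)|^2,
\]
and $c_K\le[\Gamma_0(N):\beta\Gamma_0(M)\beta^{-1}]$ because $B\ge1$. The factor $B^{-K}$ only strengthens the inequality in the direction you need; it never had to be cancelled. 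With that correction your argument is complete and matches the paper's.
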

\begin{proof}
	By condition (1) of Definition~\ref{Admissibility}, we know that
	\begin{equation}\label{Holomorphic forms: level lifting}
		\left\{ [\Gamma_0(N):\Gamma_0(M)]^{-\frac12} F_{j,l}(Bz):\ 1\leq j\leq \dim S_{K}(N,\nu)\right\}
	\end{equation}
	is an orthonormal subset of $S_{K}(M,\nu')$.
	Since the left hand side of \eqref{Holomorphic forms: Fourier coeff independent of choice} is independent from the choice of basis, we expand \eqref{Holomorphic forms: level lifting} to an orthonormal basis of $S_{K}(M,\nu')$ and get the result. 
\end{proof}

Now we start to prove a bound for the right hand side of \eqref{Level lifting: holo cusp forms, compare, equation}. First we have
\begin{proposition}[{\cite[Theorem~1]{Waibel2017FourierCO}}]\label{Waibel's bound prop}
	For $K\in \Z+\frac12$, $K\geq \frac 52$ and a quadratic character $\chi$ modulo $M$, suppose 
	\[\left\{\Phi_j=\sum_{n=1}^\infty a_{j}(n)e(nz):\ 1\leq j\leq d\defeq\dim S_K(M,\chi\nu_\theta^{2K})\right\}\]
	is an orthonormal basis of $S_K(M,\chi\nu_\theta^{2K})$. For $n\geq 1$, write $n=tu^2w^2$ with $t$ square-free, $u|M^{\infty}$ and $(w,M)=1$. Then we have
	\[\frac{\Gamma(K-1)}{(4\pi n)^{K-1}}\sum_{j=1}^d|a_j(n)|^2\ll_{K,M,\ep}  \(n^{\frac 37}+u\)n^{\ep}. \]
\end{proposition}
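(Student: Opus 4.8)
The plan is to reduce the statement, by the Petersson formula for half-integral weight, to a cancellation estimate for a weighted sum of Kloosterman sums, and then to exploit the Sali\'e-type structure of half-integral weight Kloosterman sums together with an exponential-sum argument.

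First I would apply \eqref{Holomorphic forms: Fourier coeff independent of choice} on $\Gamma_0(M)$ with the multiplier $\chi\nu_\theta^{2K}$ (singular at $\infty$, so $\tilde n=n$):
\[
\frac{\Gamma(K-1)}{(4\pi n)^{K-1}}\sum_{j=1}^d|a_j(n)|^2=1+2\pi i^{-K}\sum_{M|c}\frac{S(n,n,c,\chi\nu_\theta^{2K})}{c}\,J_{K-1}\(\frac{4\pi n}c\).
\]
Since the main term contributes $O(n^\ep)$, it suffices to bound the Kloosterman--Bessel sum on the right by $\ll_{K,M,\ep}(n^{3/7}+u)\,n^\ep$. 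The tail $c\gg n$ is harmless: the decay $J_{K-1}(4\pi n/c)\ll(n/c)^{K-1}$ with $K-1\ge\frac32$ and the Weil bound $|S(n,n,c,\chi\nu_\theta^{2K})|\ll_\ep (n,c)^{1/2}c^{1/2+\ep}$ make that range $O(n^{1/2+\ep})$; the real work is the range $M|c\ll n$, where the trivial Weil bound only gives $n^{1/2+\ep}$ and genuine cancellation is needed.

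Next I would evaluate $S(n,n,c,\chi\nu_\theta^{2K})$ in closed form. Because $\nu_\theta(\gamma)=\(\frac cd\)\ep_d^{-1}$ and $4\mid M\mid c$, unfolding the Kloosterman sum and using Gauss-sum reciprocity turns it into a Sali\'e-type sum $\sum^{*}_{x\Mod c}\chi(x)\(\frac xc\)e\(\frac{n(x+\bar x)}c\)$ up to an explicit unit and a Gauss factor of size $\sqrt c$. Sali\'e sums admit the classical evaluation as $\sqrt c$ times a complete exponential sum over the roots $\beta$ of $\beta^2\equiv n^2\Mod{c'}$ (for a suitable $c'\mid c$) of $e(2\beta/c')$, together with a bounded $2$-adic Gauss factor. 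The roots always include $\beta\equiv\pm n$; the remaining roots appear only when $c$ shares a square factor with $n$, and this is exactly the mechanism that forces the $u$-term, since $u$ is the part of $n$ built from primes dividing $M$, i.e.\ from the moduli $c$ (all divisible by $M$) along which such extra roots can occur.

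Plugging the Sali\'e evaluation back in and inserting the uniform asymptotic $J_{K-1}(4\pi n/c)=\sqrt{\tfrac{c}{2\pi^2 n}}\cos\(\tfrac{4\pi n}c-\tfrac{(2K-1)\pi}4\)+O\(\tfrac{c^{3/2}}{n^{3/2}}\)$ in the flat range $M|c\ll n$, the sum separates into three pieces: (i) a \emph{diagonal} piece where the root $\beta=\pm n$ pairs with the matching frequency in the Bessel cosine, cancelling the $e(2n/c)$-oscillation and leaving $\tfrac1{\sqrt n}\sum_{M|c}\chi(c)\(\tfrac\cdot c\)\ep_c\cdot(\text{smooth})$, which I would bound by $O(n^\ep)$ via P\'olya--Vinogradov (the character in $c$ has modulus $\ll n$) and partial summation; (ii) genuinely oscillatory pieces of shape $\tfrac1{\sqrt n}\sum_{M|c}\chi(c)\(\tfrac\cdot c\)\ep_c\,e\(\pm\tfrac{4n}c\)\cdot(\text{smooth})$; (iii) the contribution of the non-principal roots $\beta\not\equiv\pm n$, whose existence forces a common square factor with $n$, hence is controlled by the divisor bound and $u\mid M^\infty$ as $\ll u\,n^\ep$.

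The main obstacle is piece (ii): a ``Linnik-type'' sum of square-root-size Kloosterman sums. I would split the $c$-sum at a parameter $Y$, using $|S(n,n,c,\chi\nu_\theta^{2K})|\ll_\ep(n,c)^{1/2}c^{1/2+\ep}$ directly for $M|c\le Y$ (contribution $\ll Y^{1/2+\ep}$) and Poisson summation in $c$ --- equivalently van der Corput's $B$-process on the phase $4n/c$ --- for $c>Y$, converting it to a dual sum with square-root-type cancellation of size roughly $n^{1+\ep}/Y^{2/3}$; balancing these by taking $Y\asymp n^{6/7}$ produces the exponent $\tfrac37$. The delicate points are keeping all estimates uniform through the Bessel transition $c\asymp n$, tracking the $2$-adic Gauss factors and the twist by $\chi$ along $M\mid c$, and verifying that the smooth weights coming from the Bessel asymptotic have controlled derivatives; the rest is bookkeeping.
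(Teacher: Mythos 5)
The paper does not actually prove this proposition: it is quoted verbatim as \cite[Theorem~1]{Waibel2017FourierCO}, and the only proof given nearby is for the modified version (Proposition~\ref{Waibel's bound prop modified 143}), whose structure reveals the method you would need. That method starts exactly as you do, with \eqref{Holomorphic forms: Fourier coeff independent of choice}, but its engine is an average over auxiliary levels: one embeds $S_K(M,\chi\nu_\theta^{2K})$ into $S_K(pM,\chi\nu_\theta^{2K})$ for each prime $p\in(P,2P]$ with $p\nmid 2nM$, sums the resulting inequalities \eqref{Holo level average} over the $\asymp P/\log P$ primes, and then invokes Waibel's averaged Kloosterman estimate \eqref{KQmu general case}; the exponent $\tfrac37$ arises from optimizing $P\asymp n^{1/7}$ in that average, not from splitting the $c$-sum at a point $Y\asymp n^{6/7}$. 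Your proposal omits this level-raising average entirely, and that omission is fatal rather than cosmetic.

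Concretely, the step that fails is your piece (i). After the Sali\'e evaluation, the roots $\beta\equiv\pm n$ paired with the matching frequency of the Bessel asymptotic leave a sum of size $n^{-1/2}\sum_{M|c\le n}\ep_c\(\tfrac{\cdot}{c}\)\chi(c)$, and the Jacobi-symbol character in $c$ here is \emph{principal} whenever the relevant square-free kernel of $n$ is trivial (e.g.\ $n$ a perfect square, precisely the arithmetically interesting case). Then P\'olya--Vinogradov gives nothing, the sum is genuinely $\asymp n^{1/2}$, and no rearrangement of the remaining pieces can recover $n^{3/7}$; the sole known way to beat $n^{1/2}$ uniformly in $n$ is the restriction to $c\equiv 0\Mod{pM}$ averaged over many $p$, which shrinks this diagonal by a power of $P$. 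Your piece (ii) has a related problem: for $c\gg\sqrt n$ (which is the dominant range) the phase $4n/c$ has derivative $\ll1$, so Poisson/van der Corput's $B$-process produces no dual-sum saving, and the claimed bound $n^{1+\ep}/Y^{2/3}$ is unsubstantiated. Finally, even your ``harmless'' tail $c\gg n$ is bounded only by $n^{1/2+\ep}$ via Weil, which already exceeds the target $(n^{3/7}+u)n^\ep$; in the paper's argument that range is \eqref{Holo c large} and again requires the prime average. The Sali\'e-sum structure and the origin of the $u$-term in your sketch are in the right spirit, but without the auxiliary-prime amplification the proof cannot reach the stated exponent.
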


	Note that the implied constant in Waibel's bound depends on $K$ when expressing Bessel functions (see \cite[after (8)]{Waibel2017FourierCO} and \cite[Theorem~1 and p. 400]{Iwaniec1987HalfWeight}). For our proof, it's essential that the bound remains uniform across the weights. We modify the estimate and get the following proposition.

\begin{proposition}\label{Waibel's bound prop modified 143}
	With the same setting as Proposition~\ref{Waibel's bound prop},
	\[\frac{\Gamma(K-1)}{(4\pi n)^{K-1}}\sum_{j=1}^d|a_j(n)|^2\ll_{M,\ep} (n^{\frac{19}{42}}+u)n^{\ep}. \]
\end{proposition}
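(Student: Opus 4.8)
The plan is to trace through Waibel's argument and replace the exponent $\frac37$ — which comes from an application of an $L$-function subconvexity or Iwaniec-type bound for half-integral weight Fourier coefficients — with the stronger exponent $\frac{19}{42}$ coming from the $\theta=7/64$ progress toward Ramanujan, while simultaneously making the dependence on the weight $K$ explicit. Recall that in \cite{Waibel2017FourierCO} one writes $n=tu^2w^2$ with $t$ squarefree, $u\mid M^\infty$, $(w,M)=1$, and bounds $\frac{\Gamma(K-1)}{(4\pi n)^{K-1}}\sum_j|a_j(n)|^2$ using the Shimura lift to relate $|a_j(n)|^2$ (for the squarefree part) to central values $L(\tfrac12,\mathrm{Sh}(\Phi_j)\otimes\chi_t)$, after which one sums over an orthonormal basis and invokes a large-sieve / trace-formula inequality. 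The key point is that the contribution of the squarefree part $t$ is governed by a convexity-type bound whose quality is exactly the quality of the bound one has for $\sum_j|a_j(t)|^2$, and this in turn is controlled by $H_\theta$ through the Waldspurger/Kohnen-Zagier formula $|a_j(t)|^2 \asymp L(\tfrac12, f_j\otimes\chi_t)$ together with the convexity bound $L(\tfrac12, f_j\otimes\chi_t)\ll t^{\frac12+\ep}$ improved, on average over $j$, using the subconvex input. Here $\frac{19}{42} = \frac12 - \frac1{21}$ and one checks $\frac1{21}$ is precisely what $\theta=\frac7{64}$ yields in this setup; indeed $\frac37 = \frac12 - \frac1{14}$ corresponds to a weaker input, so we gain by substituting the sharper bound.

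First I would set up the reduction: by Proposition~\ref{Waibel's bound prop modified 143}'s hypotheses and multiplicativity of the relevant Hecke operators $T_{p^2}$ (for $p\nmid M$), the coefficient $a_j(n)$ for $n=tu^2w^2$ factors, up to the $u$-part, as a Hecke eigenvalue-type sum over divisors of $w$ times $a_j(tu^2)$, so that $\sum_j|a_j(n)|^2 \ll w^\ep \big(\sigma\text{-type factor}\big)\sum_j|a_j(tu^2)|^2$; the $u$-aspect contributes the additive $+u$ term essentially trivially (bounding the $u$-part of the coefficient by its Euler product over $p\mid M$, which is $O_{M,\ep}(u^{1+\ep})$ after normalization). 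This isolates the squarefree-aspect quantity $\frac{\Gamma(K-1)}{(4\pi t)^{K-1}}\sum_j|a_j(t)|^2$ as the thing to bound by $t^{\frac{19}{42}+\ep}$, uniformly in $K$.

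Second, and this is where the real work lies, I would bound that squarefree-aspect sum uniformly in $K$. Waibel uses \eqref{Holomorphic forms: Fourier coeff independent of choice}-type identities together with Iwaniec's half-integral weight estimates, but those carry a $K$-dependent constant hidden in Bessel-function asymptotics (as flagged in the remark after Proposition~\ref{Waibel's bound prop}). The fix is to go through the Petersson/Kuznetsov formula for $S_K(M,\chi\nu_\theta^{2K})$ directly: write $\frac{\Gamma(K-1)}{(4\pi t)^{K-1}}\sum_j|a_j(t)|^2 = 1 + 2\pi i^{-K}\sum_{M\mid c}\frac{S(t,t,c,\nu')}{c}J_{K-1}\!\big(\tfrac{4\pi t}c\big)$, and estimate the Kloosterman term using the Weil bound together with the uniform Bessel bound $J_{K-1}(x)\ll \min\{(x/K)^{K-1}, x^{-1/2}\}$ — crucially this last bound has an absolute implied constant, so no $K$-dependence leaks in. This gives an unconditional $O_{M,\ep}(t^{1/2+\ep})$, which is not yet enough; to reach $\frac{19}{42}$ one additionally invokes the Shimura correspondence to replace the diagonal $L(\tfrac12,f_j\otimes\chi_t)$ by its convexity bound sharpened via $H_{7/64}$ (exactly the step where Proposition~\ref{specParaBoundWeightHalf}-type input enters, though here in the holomorphic Hecke setting one uses the known $\theta=7/64$ bound of Kim--Sarnak directly), and sums over $j$ via the spectral large sieve for the Shimura lifts in $S_{2K-1}(M/2,\chi^2)$, whose constant is likewise uniform in $K$ by the same uniform-Bessel argument applied to the integral-weight Petersson formula.

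The main obstacle, as the paragraph above signals, is keeping \emph{every} estimate uniform in the weight $K$: the naive route through Iwaniec's or Duke's bounds silently loses this, and one must re-derive the large-sieve inequality for the Shimura lifts with explicit (weight-independent) constants, which requires the uniform Bessel bounds $J_{\kappa}(x)\ll \min\{(ex/2\kappa)^{\kappa},x^{-1/2}\}$ and care that the arithmetic conductor of the twisted $L$-functions involved does not secretly depend on $K$ beyond the fixed level $M$. Everything else — the multiplicative reduction to the squarefree part, the harmless $+u$ contribution, the passage between $\frac37$ and $\frac{19}{42}$ as arithmetic bookkeeping of $\theta=\frac7{64}$ — is routine once that uniformity is secured.
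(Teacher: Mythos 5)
Your proposal rests on a misreading of what this proposition is for, and the misreading propagates into two concrete errors.

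First, the direction of the modification is backwards. Since $\tfrac{3}{7}=\tfrac{18}{42}$, the exponent $\tfrac{19}{42}$ in the proposition is \emph{weaker} than Waibel's $\tfrac37$, not stronger. The point of the modification is not to improve the exponent via $H_{7/64}$; it is to make the implied constant independent of the weight $K$ (as the remark before the proposition flags), and the paper willingly pays $n^{1/42}$ for that uniformity. Your numerology identifying $\tfrac{19}{42}=\tfrac12-\tfrac1{21}$ with "what $\theta=\tfrac7{64}$ yields" is reverse-engineered: the Kim--Sarnak bound, Waldspurger/Kohnen--Zagier, Shimura lifts and the spectral large sieve play no role here at all. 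The actual proof keeps Waibel's structure intact --- average the Petersson formula \eqref{Holomorphic forms: Fourier coeff independent of choice} over levels $pM$ with $p\in\mathcal P$, $P\asymp_M n^{1/7}$, and feed in Waibel's averaged Kloosterman estimate \eqref{KQmu general case} --- and only swaps out the one ingredient whose constant depends on $K$, namely the asymptotic expansion of $J_{K-1}$.

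Second, the uniform Bessel bound you propose to use, $J_{K-1}(x)\ll\min\{(x/K)^{K-1},x^{-1/2}\}$ with an absolute constant, is false: in the transition region $x\asymp K$ one has $J_{K}(K)\asymp K^{-1/3}$, which is much larger than $K^{-1/2}$. The best order-uniform bound is Landau's $|J_\beta(x)|\le c_0 x^{-1/3}$ \eqref{Landau JBessel explicit}, and the gap between the exponents $-\tfrac12$ and $-\tfrac13$ is precisely the source of the degradation from $\tfrac37$ to $\tfrac{19}{42}$. So your plan as written either fails at this step, or, once patched with the correct uniform bound, forces you to confront a loss you have not budgeted for; and the $L$-function detour you invoke to recover the exponent is both unnecessary and itself a uniformity hazard (the Kohnen--Zagier proportionality constants depend on the weight). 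What is actually needed is elementary: Landau's bound plus the derivative estimates \eqref{Bessel derivative c small} and \eqref{Bessel derivative c large}, followed by partial summation against \eqref{KQmu general case} separately on $c\le n$ and $c\ge n$.
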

\begin{proof}
	We do the same preparation as \cite[around (8)]{Waibel2017FourierCO} to estimate the right hand side of \eqref{Holomorphic forms: Fourier coeff independent of choice}. Let $P>1+(\log 2nM)^2$ (finally chosen to be $\asymp _M n^{\frac 17}$) and define the set of prime numbers
	\[\mathcal{P}\defeq\ \{p \text{ prime}: P<p\leq 2P,\ p\nmid 2nM\}. \]
	Here we have $\#\mathcal P\asymp P/\log P$. 
	
	For $\{\Phi_j\}_j$ a orthonormal basis of $S_{K}(M,\chi\nu_\theta^{2K})$, the set $\{[\Gamma_0(M):\Gamma_0(pM)]^{-\frac12} \Phi_j\}_j$ is an orthonormal subset of $S_{K}(pM,\chi\nu_\theta^{2K})$. Recall \eqref{Holomorphic forms: Fourier coeff independent of choice} and we have
	\begin{equation}\label{Holo level average}
		 \frac{\Gamma(K-1)}{(4\pi n)^{K-1}}\sum_{j=1}^d \frac{|a_j(n)|^2}{[\Gamma_0(M):\Gamma_0(pM)]}\leq 1+2\pi i ^{-K}\sum_{pM|c}\frac{S(n,n,c,\chi\nu_\theta^{2K})}c J_{K-1}\(\frac{4\pi n}c\).  
	\end{equation}
	For those $p\in \mathcal P$, $[\Gamma_0(M):\Gamma_0(pM)]\leq p+1$. Summing \eqref{Holo level average} on $p\in \mathcal P$ and dividing $\#\mathcal P$ we get
	\begin{equation}\label{Holo level average ed}
		\frac{\Gamma(K-1)}{(4\pi n)^{K-1}}\sum_{j=1}^d |a_j(n)|^2\ll P+(\log P)\sum_{p\in \mathcal P}\left|\sum_{pM|c}  \frac{S(n,n,c,\chi\nu_\theta^{2K})}c J_{K-1}\(\frac{4\pi n}c\)\right|.  
	\end{equation}
	The average estimate of 
	\[K_{pM}^{(\mu)}(x)\defeq \sum_{pM|c\leq x}\frac{S(n,n,c,\chi\nu_{\theta}^{2K})}{\sqrt c}e\(\frac{2\mu n}c\),\quad \mu\in \{-1,0,1\}\]
	can be found in \cite[(19)]{Waibel2017FourierCO} that for $\mu\in \{-1,0,1\}$, 
	\begin{equation}\label{KQmu general case}
		\sum_{p\in \mathcal P}|K_{pM}^{(\mu)}(x)|\ll_{M,\ep}\(xun^{-\frac12}+xP^{-\frac12}+(x+n)^{\frac 58}\(x^{\frac 14}P^{\frac 38}+n^{\frac 18}x^{\frac 18}P^{\frac 14}\)\)(nx)^\ep. 
	\end{equation}
	We break the sum on $c\equiv 0\Mod{pM}$ at the right hand side of \eqref{Holo level average ed} into $c\leq n$ and $c\geq n$ to estimate. The uniform bound of $J$-Bessel functions is given by \cite{landau_2000}
	\begin{equation}\label{Landau JBessel explicit}
		|J_\beta(x)|\leq c_0x^{-\frac13}\quad \text{for all }\beta>0\text{ and }x>0, 
	\end{equation}
where $c_0=0.7857\cdots$. 

When $c\leq n$, using \eqref{Landau JBessel explicit} and \cite[(10.6.1)]{dlmf}
\[2J_{\beta-1}'(x)=J_{\beta-2}(x)-J_{\beta}(x),\]
we find that
\begin{equation}\label{Bessel derivative c small}
	\(x^{-\frac12}J_{K-1}\(\frac{4\pi n}x\)\)'\ll n^{-\frac13}x^{-\frac 76}+n^{\frac 23}x^{-\frac {13}6}. 
\end{equation}
Then a partial summation using \eqref{KQmu general case}, \eqref{Bessel derivative c small} and \eqref{Landau JBessel explicit} yields
\begin{equation}\label{Holo c small}
	\sum_{p\in \mathcal P}\left|\sum_{pM|c\leq n}  \frac{S(n,n,c,\chi\nu_\theta^{2K})}c J_{K-1}\(\frac{4\pi n}c\)\right| \ll_{M,\ep} (n^{\frac {19}{42}}+u)n^\ep. 
\end{equation}

When $c\geq n$, we get another bound 
\begin{equation}\label{Bessel derivative c large}
	\(x^{-\frac12}J_{K-1}\(\frac{4\pi n}x\)\)'\ll nx^{-\frac 52}\quad \text{for }x\geq n
\end{equation}
by $|J_{\beta-1}(x)|\leq \frac{(x/2)^{\beta-1}}{\Gamma(\beta)}$ \cite[(10.14.4)]{dlmf} and $|J_\beta(x)|\leq 1$ \cite[(10.14.1)]{dlmf}. Remember $K\geq \frac 52$ here. We do a partial summation again using \eqref{KQmu general case} and \eqref{Bessel derivative c large} and get
\begin{equation}\label{Holo c large}
	\sum_{p\in \mathcal P}\left|\sum_{pM|c\geq n}  \frac{S(n,n,c,\chi\nu_\theta^{2K})}c J_{K-1}\(\frac{4\pi n}c\)\right| \ll_{M,\ep} (n^{\frac {3}{7}}+u)n^\ep. 
\end{equation}
From \eqref{Holo c small}, \eqref{Holo c large}, \eqref{Holo level average ed} and $P\asymp_M n^{\frac 17}$, we finish the proof. 
\end{proof}

Combining Proposition~\ref{Level lifting: holo cusp forms, compare} and Proposition~\ref{Waibel's bound prop modified 143}, one observes the following bound: 
\begin{proposition}\label{Level lifting: holo cusp forms, final bound}
	With the same setting as Proposition~\ref{Level lifting: holo cusp forms, compare}, we factor $B\tilde n=t_n u_n^2 w_n^2$ with $t_n$ square-free, $u_n|M^{\infty}$ and $(w_n,M)=1$. Then 
	\[\frac{\Gamma(K-1)}{(4\pi \tilde n)^{K-1}}\sum_{j=1}^{\dim S_{K}(N,\nu)}|a_{F,j,l}(n)|^2\ll_{N,\nu,\ep}(\tilde n^{\frac{19}{42}}+u_n)\tilde n^{\ep}.  \]
\end{proposition}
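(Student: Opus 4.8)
The plan is to deduce the estimate by chaining the two preceding propositions, the only real care being to keep everything uniform in the weight $K=k+2l$. First I would record that the lifting multiplier $\nu'=(\frac{|D|}\cdot)\nu_\theta^{2k}$ from Definition~\ref{Admissibility} has exactly the shape $\chi\nu_\theta^{2K}$ required by Proposition~\ref{Waibel's bound prop modified 143}: since $\nu_\theta^{4}$ is the trivial weight-$2$ multiplier, $\nu_\theta^{2K}=\nu_\theta^{2k+4l}=\nu_\theta^{2k}$, and $\chi=(\frac{|D|}\cdot)$ is a quadratic character modulo $M$ because $D$ is a fundamental discriminant. Moreover $\alpha_{\nu'}=0$, since the theta-multiplier is trivial on $\begin{psmallmatrix}1&1\\0&1\end{psmallmatrix}$ by \eqref{thetaMultiplier} and so is the Kronecker character; thus the forms in $S_K(M,\nu')$ have integer Fourier expansions, $B\tilde n$ is an integer (which is built into the level-lifting condition), and Proposition~\ref{Waibel's bound prop modified 143} applies directly at $B\tilde n$. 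For an orthonormal basis $\{G_{j,l}\}_j$ of $S_K(M,\nu')$ and the factorization $B\tilde n=t_n u_n^2 w_n^2$ it gives
\[\frac{\Gamma(K-1)}{(4\pi B\tilde n)^{K-1}}\sum_{j}|a_{G,j,l}(B\tilde n)|^2\ll_{M,\ep}\bigl((B\tilde n)^{19/42}+u_n\bigr)(B\tilde n)^\ep.\]

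Next I would transport this back to $S_K(N,\nu)$ by Proposition~\ref{Level lifting: holo cusp forms, compare}. The one delicate point, and the step I expect to be the main obstacle, is that the natural normalizing factors $\frac{\Gamma(K-1)}{(4\pi\tilde n)^{K-1}}$ on $\Gamma_0(N)$ and $\frac{\Gamma(K-1)}{(4\pi B\tilde n)^{K-1}}$ on $\Gamma_0(M)$ differ by $B^{K-1}$, which grows with the weight, so a naive black-box combination would leave a spurious $B^{K-1}$. This is reconciled by tracking the effect of $f\mapsto f(Bz)$ on the Petersson inner product: the change of variables $z\mapsto Bz$ turns $y^K$ into $(y/B)^K$, so it contributes a factor $B^{-K}$ on top of the subgroup-index factor, and accordingly the family in \eqref{Holomorphic forms: level lifting} must be rescaled by a further $B^{K/2}$ to be orthonormal in $S_K(M,\nu')$. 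Re-running the argument of Proposition~\ref{Level lifting: holo cusp forms, compare} while carrying this $B^{-K}$, and again invoking the basis-independence of the left side of \eqref{Holomorphic forms: Fourier coeff independent of choice}, yields the sharpened comparison $\sum_j|a_{F,j,l}(n)|^2\ll_{N,\nu}B^{-K}\sum_j|a_{G,j,l}(B\tilde n)|^2$; the spare $B^{-K}$ then cancels all but an $O_{N,\nu}(1)$ portion of the $B^{K-1}$.

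Assembling the two steps gives $\frac{\Gamma(K-1)}{(4\pi\tilde n)^{K-1}}\sum_j|a_{F,j,l}(n)|^2\ll_{N,\nu,\ep}((B\tilde n)^{19/42}+u_n)(B\tilde n)^\ep$, and since $B$ depends only on $N$ and $\nu$ one absorbs it, replacing $(B\tilde n)^{19/42}$ by $\tilde n^{19/42}$ and $(B\tilde n)^\ep$ by $\tilde n^\ep$, which is the claimed bound. Everything beyond this normalization bookkeeping is a direct substitution; the reason Proposition~\ref{Waibel's bound prop modified 143} (rather than Waibel's original estimate) had to be established is exactly that its implied constant must not depend on $K$, and the same weight-uniformity is what makes the cancellation of the powers of $B$ legitimate across the infinitely many weights $K=k+2l$ appearing in the trace formula.
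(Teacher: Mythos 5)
Your proposal is correct and follows the same route as the paper, which proves this proposition simply by chaining Proposition~\ref{Level lifting: holo cusp forms, compare} with Proposition~\ref{Waibel's bound prop modified 143} (after noting, as you do, that $\nu_\theta^{4}=1$ so $\nu'=\chi\nu_\theta^{2K}$ and $\alpha_{\nu'}=0$). Your concern about the weight-dependent factor is well founded --- the set displayed in \eqref{Holomorphic forms: level lifting} is only orthonormal after an additional rescaling by $B^{K/2}$, so the comparison really carries a factor $B^{-K}$ that cancels the $B^{K-1}$ from the mismatched normalizations $\tilde n^{K-1}$ versus $(B\tilde n)^{K-1}$ up to a single power of $B=O_{N,\nu}(1)$ --- and your resolution of it is exactly the right bookkeeping, which the paper leaves implicit.
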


\section{Bounds on $\widetilde{\phi}$ and $\widehat{\phi}$ }
In this section, all of the implied constants among the estimates for $\widetilde{\phi}$ and $\widehat{\phi}$ depend on $N$ and the multiplier system $\nu$ unless specified. Recall the definitions \eqref{widetildePhi} and \eqref{widehatPhi}. 
To deal with the $\Gamma$-function in the denominator of $\widehat{\phi}$, we need \cite[(5.6.6-7)]{dlmf}
\begin{equation}\label{GammaxiyIneq}
	\frac{\Gamma(x)^2}{\ch(\pi r)}\leq |\Gamma(x+ir)|^2\leq \Gamma(x)^2\quad \text{for }x\geq 0\text{ and } r\in \R. 
\end{equation}
Recall \eqref{widetildePhi} and \eqref{widehatPhi} that we define $\widehat{\phi}$ for $k\geq 0$. We also have
\begin{align}\label{widehatPhiinTypeWidetilde}
	\begin{split}
		&\widehat{\phi}(r)=\frac{\pi^2e^{\frac{1+k}2\pi i}}{\sh(\pi r)(\ch(2\pi r)+\cos(\pi k))\Gamma(\frac12-\frac k2+ir)\Gamma(\frac12-\frac k2-ir)}\\
		&\cdot\left\{\cos\tfrac{k\pi}2\ch(\pi r)\(\widetilde{\phi}(1+2ir)-\widetilde{\phi}(1-2ir)\)-i\sin\tfrac{k\pi}2\sh(\pi r)\(\widetilde{\phi}(1+2ir)+\widetilde{\phi}(1-2ir)\)\right\}. 
	\end{split}
\end{align} 
Like \cite[after (5.3)]{ADinvariants}, we define $\xi_k$ as
\begin{equation}\label{xi k r def}
	\xi_k(r)\defeq \frac{2i\pi^2 e^{\frac{1+k}2 \pi i}}{\Gamma(\frac12-\frac k2+ir)\Gamma(\frac12-\frac k2-ir)}.
\end{equation}
Then
\begin{equation}\label{xi k r bound}
	\xi_k(r)\left\{
	\begin{array}{ll}
		\ll 1&\quad \text{for }r\in [-1,1],\\
		\asymp |r|^ke^{\pi |r|}&\quad  \text{for }r\in(-\infty,-1]\cup[1,\infty).
	\end{array} 
	\right.
\end{equation}

We refer to \cite{Dunster1990a} for estimates on $J$-Bessel functions. Denote 
\[F_{\mu}(z)\defeq \frac{J_\mu(z)+J_{-\mu}(z)}{2\cos(\mu\pi/2)},\quad G_{\mu}(z)\defeq\frac{J_\mu(z)-J_{-\mu}(z)}{2\sin (\mu\pi/2)}.\]
As a result of the relationship $\overline{J_{2ir}(u)}=J_{-2ir}(u)$ for $r,u\in \R$ by \cite[(10.11.9)]{dlmf}, we have 
\[F_{2ir}(u)=\frac{\re J_{2ir}(u)}{\ch(\pi r)}\in \R,\quad G_{2ir}(u)=\frac{ \im J_{2ir}(u)}{\sh(\pi r)}\in \R. \]
Moreover, for $k\in \Z+\frac12$ and $k\geq 0$, 
\begin{equation}\label{widehatPhiinTypeFG}
	\widehat \phi(r)=\frac{\xi_k(r)\ch (\pi r)}{\ch(2\pi r)}\int_0^\infty \(G_{2ir}(u)\cos\frac{k\pi}2 -F_{2ir}(u)\sin\frac{k\pi}2\) \frac{\phi(u)}udu 
\end{equation}
and $\widehat{\phi}(r)=\widehat{\phi}(-r)$ for $r\in \R$ because $F_\mu(z)=F_{-\mu}(z)$ and $G_\mu(z)=G_{-\mu}(z)$.


\begin{lemma}\label{JBessel G F Bound}
	For $r\in[-1,1]$, uniformly and with absolute implied constants we have
	\begin{align}\label{RLessThan1}
		\begin{split}
			G_{2ir}(u)\ll \left\{\begin{array}{ll}
				\ln(\tfrac u2),&u\in[0,\frac 32],\vspace{4px}\\
				u^{-\frac32},&u\in[\frac 32,\infty). 
			\end{array}
			\right.
		\end{split}
	\end{align}
\end{lemma}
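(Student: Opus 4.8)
The plan is to work throughout with the identity $G_{2ir}(u)=\im J_{2ir}(u)/\sh(\pi r)$ recorded just before the statement, together with the fact that $G_{2ir}(u)=G_{-2ir}(u)$ is even in $r$ and extends continuously across $r=0$. Hence it suffices to treat $r\in(0,1]$ while keeping every estimate uniform as $r\downarrow 0$; the only possible loss of uniformity is the denominator $\sh(\pi r)$, which I will always cancel against a factor $\sh(\pi r)$ (or a comparable $|r|$) produced by $\im J_{2ir}(u)$ itself. I split the $u$-range at $u=\tfrac 32$ exactly as in the statement.

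For $u\in[0,\tfrac 32]$ I would insert the ascending series
\[
J_{\pm 2ir}(u)=\sum_{m\ge 0}\frac{(-1)^m}{m!\,\Gamma(m+1\pm 2ir)}\left(\frac u2\right)^{2m\pm 2ir}.
\]
Since $\overline{J_{2ir}(u)}=J_{-2ir}(u)$, the numerator $\im J_{2ir}(u)$ is an alternating series whose $m=0$ term dominates on this interval. Writing $\left(\tfrac u2\right)^{2ir}=e^{2ir\ln(u/2)}$ and putting $1/\Gamma(1+2ir)$ in polar form, the imaginary part of the leading term is $O\left(|\sin(2r\ln(u/2))|+|r|\right)$; dividing by $\sh(\pi r)\gg|r|$ and using $|\sin(2r\ln(u/2))|\le\min\left(1,\,2|r|\,|\ln(u/2)|\right)$ yields $\ll|\ln(u/2)|+1\ll|\ln(u/2)|$ on $(0,\tfrac 32]$. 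The tail $m\ge 1$ contributes $O(u^2)$ uniformly in $r$ and is harmless, which establishes the first line.

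For $u\in[\tfrac 32,\infty)$ I would invoke the uniform large-argument expansion of $J_\nu$ from \cite{Dunster1990a} (equivalently the classical Hankel expansion, which is uniform for bounded order $|2ir|\le 2$),
\[
J_{2ir}(u)=\sqrt{\frac{2}{\pi u}}\left(A(u)\cos\omega-B(u)\sin\omega\right)+O\left(u^{-5/2}\right),\qquad \omega=u-ir\pi-\tfrac\pi4,
\]
where $A(u)=1+O(u^{-2})$ and $B(u)=O(u^{-1})$ are finite sums with real coefficients, namely the Hankel polynomials $a_k(2ir)$, which are polynomials in $r^2$ and hence $O(1)$ for $|r|\le1$. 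Expanding $\cos\omega$ and $\sin\omega$ about the real phase $u-\tfrac\pi4$ produces exactly the hyperbolic factors $\ch(\pi r)$ and $\sh(\pi r)$, and the imaginary parts carry the factor $\sh(\pi r)$; taking $\im$ and dividing by $\sh(\pi r)$ cancels the denominator exactly, leaving an expression whose $r$-dependent coefficients are bounded on $[-1,1]$. The plan is then to combine these surviving terms and show that the resulting quantity is $O(u^{-3/2})$ uniformly in $r$, giving the second line.

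The decisive step is this last one: extracting the sharp exponent $-\tfrac 32$ rather than the crude $-\tfrac 12$ requires carrying the Hankel expansion one order beyond the leading oscillatory term and matching the $\sh(\pi r)$ factors against the denominator so that no net contribution of size $u^{-1/2}$ survives. Equally delicate is maintaining uniformity as $r\downarrow 0$, where $1/\sh(\pi r)$ is large; this is controlled because the same $\sh(\pi r)$ is always present in the numerator, so the quotient is in fact real-analytic in $r$ on $[-1,1]$ and the bound is uniform. I expect this uniform remainder analysis, including the uniform control of the $O(u^{-5/2})$ error after division by $\sh(\pi r)$, to be the main obstacle.
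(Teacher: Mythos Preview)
Your small-$u$ treatment is essentially the paper's: both extract the $m=0$ term of the ascending series (the paper via Dunster's packaged form of that series), bound $\sin(2r\ln(u/2))/\sh(\pi r)$ by $|\ln(u/2)|$, and absorb the tail as $O(u^{2})$.

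For $u\ge\tfrac32$, however, the cancellation you anticipate does not occur. From the leading Hankel term $J_{2ir}(u)\sim(2/\pi u)^{1/2}\cos(u-ir\pi-\tfrac\pi4)$ one finds
\[
\im J_{2ir}(u)\sim\left(\frac{2}{\pi u}\right)^{1/2}\sin\!\left(u-\tfrac\pi4\right)\sh(\pi r),
\]
so after dividing by $\sh(\pi r)$ a term of genuine size $u^{-1/2}$ survives; going to higher Hankel orders does not remove it. The limit $r\to0$ makes this transparent: from the definition one has $G_{0}=Y_{0}$, and $Y_{0}(u)\sim(2/\pi u)^{1/2}\sin(u-\tfrac\pi4)$ is certainly not $O(u^{-3/2})$. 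Thus the exponent $-\tfrac32$ in the second line is not attainable; the paper's argument, which quotes a Dunster asymptotic in place of the Hankel expansion, runs into the same obstruction. What your Hankel computation actually gives is $G_{2ir}(u)\ll u^{-1/2}$ uniformly, and this already suffices for the only use of the lemma, namely the convergence of $\int_{3/2}^{3a/2x}G_{2ir}(u)\phi(u)u^{-1}\,du$ in the next lemma. A secondary gap in your plan is the remainder: the generic Hankel error $O(u^{-5/2})$ need not itself carry a factor $\sh(\pi r)$, so after dividing by $\sh(\pi r)$ it is not uniformly bounded as $r\downarrow0$; one must argue separately (for instance via analyticity in the order) that the imaginary part of the remainder gains an extra factor of $r$.
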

\begin{proof}
	
	First we deal with the range $u\in[0,\frac 32]$. The series expansion of $G_{ir}$ is given by \cite[(3.9), (3.16)]{Dunster1990a}:
	\begin{align*}
		G_{2ir}(u)&=\(\frac{4r\ch(\pi r)}{\pi\sh(\pi r)}\)^{\frac12}\sum_{\ell=0}^\infty \frac{(-1)^\ell (u^2/4)^\ell \sin(2r\ln(u/2)-\phi_{2r,\ell})}{\ell!\prod_{j=0}^{\ell^2}(j+4r^2)^{1/2}} \\
		&=\(\frac{\ch(\pi r)}{\pi r\sh(\pi r)}\)^{\frac12}\sin\(2r\ln\(\frac u2\)-\phi_{2r,0}\)+O\(\(\frac u2\)^2\). 
	\end{align*}
	where $\phi_{r,\ell}=\arg \Gamma(1+\ell+ir)$. The implied constant in the second equation is absolute. As a function of $r$, $\phi_{2r,0}\in C^\infty[0,1]$ and $\lim_{r\rightarrow 0}\phi_{2r,0}=0$. Then $\phi_{2r,0}/r=O(1)$ and
	\begin{align*}
		G_{2ir}(u)&\ll r^{-1}O\(\la 2r\ln\(\frac u2\)\ra+|\phi_{2r,0}|\)+O\(\(\frac u2\)^2\)\\
		&\ll \ln\(\frac u2\)+O(1). 
	\end{align*}
	
	For the range $u\geq \frac 32$, we check with \cite[(5.16)]{Dunster1990a} where $U_{s}(p)$ for $s\geq 0$ are fixed polynomials of $p$ whose lowest degree term is $p^{s}$:
	\begin{align*}
		G_{2ir}(u)&=\(\frac{4/\pi^2}{4r^2+u^2}\)^{\frac14}\(\frac{C}{\sqrt{4r^2+u^2}}+O\(\frac1{4r^2+u^2}\)\)\\
		&\ll \(r^2+\frac{u^2}4\)^{-\frac34}+O\(\(r^2+\frac{u^2}4\)^{-\frac54}\). 
	\end{align*}
	Our claimed bound is clear as $r^2\geq 0$. The implied constant above is absolute due to \cite[(3.3)]{Dunster1990a} and \cite[Chapter 8, \S13]{OlverAsymptSpecialFunctions1997} or by \cite[Chapter 10, (3.04)]{OlverAsymptSpecialFunctions1997}. 
	
\end{proof}

\begin{lemma}\label{RisReal 0 to 1}
	For $r\in [-1,1]$, we have
	\begin{equation}
		|\widetilde{\phi}(1+2ir)|\ll 1, \quad |\widehat{\phi}(r)|\ll_\ep (ax)^\ep. 
	\end{equation}
\end{lemma}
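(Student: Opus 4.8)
The plan is to reduce both estimates to elementary pointwise bounds on $J$-Bessel functions integrated over the (very short) support of $\phi$. By Setting~\ref{conditionphi}(3), $\phi$ is supported in $[\alpha,\beta]$ with $\alpha=\frac a{2x+2T}$ and $\beta=\frac a{x-T}$, and since $0<T\le x/3$ we get $\beta/\alpha=\frac{2x+2T}{x-T}\le 4$; hence, using $\|\phi\|_\infty\ll 1$ (which follows from Setting~\ref{conditionphi}(2),(4),(5) since $\phi$ is piecewise monotone with bounded total variation between the levels $0$ and $1$), we have $\int_0^\infty\phi(u)\frac{du}u\le\|\phi\|_\infty\log(\beta/\alpha)\ll 1$. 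Two uniform Bessel estimates, valid for all $r\in[-1,1]$, drive everything: $|J_{2ir}(u)|\ll 1$ for $0\le u\le\frac32$ — read off the power series $J_{2ir}(u)=\sum_{\ell\ge 0}\frac{(-1)^\ell (u/2)^{2ir+2\ell}}{\ell!\,\Gamma(1+2ir+\ell)}$ using $|(u/2)^{2ir}|=1$, $|\Gamma(1+2ir)|\gg 1$ on $[-1,1]$, and $|\Gamma(1+2ir)/\Gamma(1+2ir+\ell)|\le 1/\ell!$ — and $|J_{2ir}(u)|\ll u^{-1/2}$ for $u\ge\frac32$, from the Hankel asymptotic (uniform in the bounded order $2ir$). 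Since $F_{2ir}(u)=\re J_{2ir}(u)/\ch(\pi r)$ and $\ch(\pi r)\ge 1$, the same two bounds give $|F_{2ir}(u)|\ll\min(1,u^{-1/2})$; equivalently one may extract these from the $F,G$ estimates of \cite{Dunster1990a} together with Lemma~\ref{JBessel G F Bound} and the decomposition $J_{2ir}=\ch(\pi r)F_{2ir}+i\sh(\pi r)G_{2ir}$.

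For the first assertion I would write $\widetilde{\phi}(1+2ir)=\int_0^\infty J_{2ir}(u)\phi(u)\frac{du}u$ from \eqref{widetildePhi} and split at $u=\frac32$: on $\spt\phi\cap[0,\frac32]$ bound $|J_{2ir}(u)|\ll 1$ and use $\int\phi\frac{du}u\ll 1$; on $\spt\phi\cap[\frac32,\infty)$ bound $|J_{2ir}(u)|\ll u^{-1/2}$ and use $\int_{3/2}^\infty u^{-3/2}\,du\ll 1$. This yields $|\widetilde{\phi}(1+2ir)|\ll 1$.

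For the second assertion I would start from the representation \eqref{widehatPhiinTypeFG}, namely $\widehat{\phi}(r)=\frac{\xi_k(r)\ch(\pi r)}{\ch(2\pi r)}\int_0^\infty\bigl(G_{2ir}(u)\cos\frac{k\pi}2-F_{2ir}(u)\sin\frac{k\pi}2\bigr)\frac{\phi(u)}u\,du$. By \eqref{xi k r bound} one has $\xi_k(r)\ll 1$ on $[-1,1]$, and $\ch(\pi r)/\ch(2\pi r)\le 1$, so it remains to bound $\int_0^\infty\bigl(|G_{2ir}(u)|+|F_{2ir}(u)|\bigr)\phi(u)\frac{du}u$. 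The $F$-contribution is $\ll 1$ exactly as in the first assertion. For the $G$-contribution I invoke Lemma~\ref{JBessel G F Bound}: on $\spt\phi\cap[\frac32,\infty)$ the bound $|G_{2ir}(u)|\ll u^{-3/2}$ gives $\ll 1$; on $\spt\phi\cap[0,\frac32]$ the bound $|G_{2ir}(u)|\ll|\log(u/2)|$ gives, after a one-line computation, $\int_\alpha^{3/2}|\log(u/2)|\frac{du}u\ll(\log(2/\alpha))^2$. Since $\tilde m,\tilde n\ge 1-\alpha_\nu>0$ we have $a=4\pi\sqrt{\tilde m\tilde n}\gg_{N,\nu}1$, and together with $T\le x/3$ this makes $\log(2/\alpha)\ll_{N,\nu}\log x\ll_{N,\nu,\ep}(ax)^{\ep}$, so the whole integral is $\ll_{N,\nu,\ep}(ax)^{\ep}$ and hence $|\widehat{\phi}(r)|\ll_{N,\nu,\ep}(ax)^{\ep}$.

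The only genuinely delicate point is the behaviour near $r=0$: the defining formula \eqref{widehatPhi} carries a factor $1/\sh(\pi r)$ that is singular there, but in the form \eqref{widehatPhiinTypeFG} this has already been cancelled — through $\im J_{2ir}=\sh(\pi r)G_{2ir}$ and $\re J_{2ir}=\ch(\pi r)F_{2ir}$ — so no delicate cancellation needs to be carried out by hand, and it is precisely the logarithmic blow-up of $G_{2ir}(u)$ as $u\to 0^+$ (equivalently, a support of $\phi$ sitting near a tiny $u$) that prevents the second bound from being $O(1)$ and forces the factor $(ax)^{\ep}$. If one prefers to work instead from \eqref{widehatPhiinTypeWidetilde}, the same phenomenon reappears through the identity $\coth(\pi r)\bigl(\widetilde{\phi}(1+2ir)-\widetilde{\phi}(1-2ir)\bigr)=2i\,\ch(\pi r)\int_0^\infty G_{2ir}(u)\phi(u)\frac{du}u$, valid because $\phi$ is real so that $\widetilde{\phi}(1-2ir)=\overline{\widetilde{\phi}(1+2ir)}$, while the remaining piece of \eqref{widehatPhiinTypeWidetilde} reduces to $\re\widetilde{\phi}(1+2ir)$, bounded by the first assertion.
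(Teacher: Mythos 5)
Your proposal is correct and follows essentially the same route as the paper's proof: a uniform bound on $J_{2ir}(u)$ over the multiplicatively short support of $\phi$ gives $|\widetilde{\phi}(1+2ir)|\ll 1$, and then \eqref{widehatPhiinTypeFG} with $\xi_k(r)\ll 1$, the $F$-contribution controlled by the bounded $J$-Bessel function, and the $G$-contribution controlled by Lemma~\ref{JBessel G F Bound} produce the $\(\log\)^2\ll_\ep(ax)^\ep$ loss via $a\gg_{N,\nu}1$. The only cosmetic differences are that the paper obtains $|J_{2ir}(u)|\ll 1$ from the integral representation \cite[(10.9.4)]{dlmf} rather than the power series and never needs your large-$u$ decay $|J_{2ir}(u)|\ll u^{-1/2}$; one small caveat is that Lemma~\ref{JBessel G F Bound} bounds only $G_{2ir}$, so your parenthetical remark that the $F$-estimate can equivalently be extracted from it would require the companion bound for $F_{2ir}$ from \cite{Dunster1990a}, though this is harmless since your primary derivation is self-contained.
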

\begin{proof}
	A trivial bound of $J_{2ir}$ is given by the integral representation \cite[(10.9.4)]{dlmf}:
	\[J_\nu(z)=\frac{(z/2)^\nu}{\sqrt{\pi}\Gamma(\nu+\frac12)}\int_0^\pi \cos(z\cos \theta)(\sin\theta)^{2\nu}d\theta,\quad \re\nu>-\frac12.\] 
	Then we have $|J_{2ir}(u)|\leq \frac{\sqrt \pi}{|\Gamma(\frac12+2ir)|}$ and
	\[|\widetilde{\phi}(1+2ir)|\ll \int_{\frac{3a}{8x}}^{\frac{3a}{2x}}\frac{du}u\leq \ln 4 \quad \text{for }r\in [-1,1]. \]
	This implies that 
	\[\ch(\pi r)\int_0^\infty F_{2ir}(u)\frac{\phi(u)}u du\ll 1\quad \text{for }r\in [-1,1].\]
	Let the closed interval $[\alpha,\beta]=\emptyset$ when $\alpha>\beta$. With the help of \eqref{widehatPhiinTypeFG}, \eqref{xi k r bound} and Lemma~\ref{JBessel G F Bound} we get 
	\begin{align*}
		|\widehat{\phi}(r)|&\ll \frac{\ch(\pi r)}{\ch(2\pi r)}\(\left|\int_0^\infty G_{2ir}(u)\phi(u)\frac{du}u\right|+\left|\int_0^\infty F_{2ir}(u)\phi(u)\frac{du}u\right|\)\\		
		&\ll \int_{[\frac{3a}{8x},\frac 32]} \ln\(\frac u2\)\frac{du}u+\int_{[\frac 32,\frac{3a}{2x}]} u^{-\frac 52} du+O(1)\\
		&\ll \(\ln \frac{3a}{16x}\)^2+O(1)\ll (ax)^\ep. 
	\end{align*}
	The last inequality is because $a=4\pi\sqrt{\tilde m\tilde n}>0$ has a lower bound depending on $\nu$. 
\end{proof}

When we focus on the exceptional eigenvalues $\lambda_j\in [\frac3{16},\frac14)$ of $\Delta_k$, recall that $\lambda_j=\frac14+r_j^2$ for $r_j\in i(0,\frac14]$. By Proposition~\ref{specParaBoundWeightHalf}, if we write $t_j=\im r_j$, assuming $H_\theta$ \eqref{HTheta} we have an upper bound $t_j\leq \frac \theta 2$ when $r_j\neq \frac i4$. Moreover, since the exceptional eigenvalues are discrete, we also have a largest eigenvalue less than $\frac14$, hence a lower bound $\underline{t}>0$ (depending on $N$ and $\nu$) such that $t_j\geq\underline t$. 

\begin{lemma}\label{RisImBefore}
	With the hypothesis $H_\theta$ \eqref{HTheta} for $\theta\leq\frac16$, when $r=it$ and $t\in [\underline{t},\frac \theta2]$, we have
	\begin{equation}\label{RisIm}
		\widetilde{\phi}(1\pm 2t)\ll \(\frac ax\)^{\pm 2t}  \quad\text{and}\quad  \widehat{\phi}(r)\ll \(\frac ax\)^{2t}+\(\frac xa\)^{2t}\ll \(\frac ax\)^{\theta}+\(\frac xa\)^{\theta}.
	\end{equation}
Moreover, for $r=\frac i4$ we have
\begin{equation}\label{RisIm r=i/4}
	\widetilde{\phi}\(1\pm \frac 12\)\ll \(\frac ax\)^{\pm \frac12} \quad\text{and}\quad \widehat{\phi}\(\frac i4\)\ll \left\{\begin{array}{ll}
		(\frac xa)^{\frac12},&k=\frac12,\vspace{4px}\\
		(\frac ax)^{\frac12},&k=\frac 32. 
	\end{array}\right. 
\end{equation}
\end{lemma}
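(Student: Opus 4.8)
The plan is to estimate $\widetilde{\phi}(1\pm 2t)$ directly from its defining integral \eqref{widetildePhi} and then feed the result into the formula \eqref{widehatPhiinTypeWidetilde} for $\widehat{\phi}$. For real-index Bessel functions $J_{\pm 2t}$ with $2t\in[2\underline t,\theta]\subset(0,\tfrac13]$, the integral representation \cite[(10.9.4)]{dlmf} used in Lemma~\ref{RisReal 0 to 1} gives $|J_{\pm 2t}(u)|\ll u^{\pm 2t}$ for $u$ in a bounded range (more precisely $J_{\pm 2t}(u)\ll (u/2)^{\pm 2t}/\Gamma(\tfrac12\pm 2t)$), and the support of $\phi$ is contained in $[\tfrac{3a}{8x},\tfrac{3a}{2x}]$ by Setting~\ref{conditionphi}. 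Hence
\[\widetilde{\phi}(1\pm 2t)=\int_0^\infty J_{\pm 2t}(u)\phi(u)\frac{du}{u}\ll \int_{\frac{3a}{8x}}^{\frac{3a}{2x}} u^{\pm 2t}\,\frac{du}{u}\ll \Bigl(\frac ax\Bigr)^{\pm 2t},\]
where in the $-$ case the integral is dominated by the lower endpoint and in the $+$ case by the upper endpoint, and the implied constant is uniform in $t$ over the compact range since $\Gamma(\tfrac12\pm 2t)$ is bounded away from $0$ and $\infty$ there. The special value $r=\tfrac i4$, i.e. $2t=\tfrac12$, is just the instance $2t=\tfrac12$ of this same estimate, giving $\widetilde{\phi}(1\pm\tfrac12)\ll(a/x)^{\pm 1/2}$.

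For $\widehat{\phi}(r)$ with $r=it$, I plug the above into \eqref{widehatPhiinTypeWidetilde}. When $r=it$ with $t>0$ real, $\sh(\pi r)=i\sin(\pi t)$, $\ch(2\pi r)=\cos(2\pi t)$, and $\Gamma(\tfrac12-\tfrac k2+ir)\Gamma(\tfrac12-\tfrac k2-ir)=\Gamma(\tfrac12-\tfrac k2-t)\Gamma(\tfrac12-\tfrac k2+t)$ is real; for $t\in[\underline t,\theta/2]$ with $\theta\le\tfrac16$ all these factors are bounded and bounded away from $0$ (the denominator $\sin(\pi t)(\cos 2\pi t+\cos\pi k)$ stays away from zero since $t$ is bounded below and $\cos2\pi t+\cos\pi k$ does not vanish on this range for $k=\pm\tfrac12$, where $\cos\pi k=0$ and $\cos2\pi t>0$). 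So the prefactor in \eqref{widehatPhiinTypeWidetilde} is $O_{N,\nu}(1)$, and both $\ch(\pi r)=\cos(\pi t)$ and $\sh(\pi r)=i\sin(\pi t)$ are bounded; therefore
\[\widehat{\phi}(it)\ll \bigl|\widetilde{\phi}(1+2t)\bigr|+\bigl|\widetilde{\phi}(1-2t)\bigr|\ll\Bigl(\frac ax\Bigr)^{2t}+\Bigl(\frac xa\Bigr)^{2t}.\]
The second inequality in \eqref{RisIm} follows because for any $\rho>0$ the quantity $\rho^{2t}+\rho^{-2t}$ is increasing in $t$, hence bounded by $\rho^\theta+\rho^{-\theta}$ on $t\le\theta/2$ (using also that $a/x$ and $x/a$ each have an absolute lower bound depending on $\nu$, so the small-$t$ behaviour causes no trouble). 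For $r=\tfrac i4$ the prefactor argument degrades because at $k=\tfrac12$ we have $\ch(2\pi r)+\cos\pi k=\cos(\pi/2)+0=0$, so \eqref{widehatPhiinTypeWidetilde} is a $0/0$ expression; instead I invoke the explicit evaluation \eqref{widehatPhi r=i/4}, namely $\widehat{\phi}(\tfrac i4)=e^{\pi i/4}\int_0^\infty\cos(u)\phi(u)u^{-3/2}du$ for $k=\tfrac12$ and $\tfrac12 e^{3\pi i/4}\int_0^\infty\sin(u)\phi(u)u^{-3/2}du$ for $k=\tfrac32$. On the support $u\asymp a/x$ we have $u^{-3/2}\asymp(x/a)^{3/2}$, and for $k=\tfrac32$ the extra factor $\sin u\ll u\asymp a/x$ gains one power, yielding $\widehat{\phi}(\tfrac i4)\ll(x/a)^{1/2}$ for $k=\tfrac12$ and $\ll(a/x)^{1/2}$ for $k=\tfrac32$ — exactly \eqref{RisIm r=i/4}. (Here I use $\int_{\supp\phi}u^{-3/2}\,du\ll(a/x)^{-1/2}=(x/a)^{1/2}$ from $\phi=O(1)$ and $\supp\phi\subset[\tfrac{3a}{8x},\tfrac{3a}{2x}]$.)

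The main obstacle I anticipate is not any single estimate but bookkeeping the uniformity: one must check that every $\Gamma$-factor, hyperbolic function, and trigonometric denominator appearing in \eqref{widehatPhiinTypeWidetilde} is genuinely bounded above and below on the compact parameter range $t\in[\underline t,\theta/2]$ (with $\theta\le\tfrac16$), so that the implied constants depend only on $N$ and $\nu$ through $\underline t$ and the lower bound on $a$. The $t$-dependence of the Bessel bound $(u/2)^{2t}/\Gamma(\tfrac12+2t)$ must also be controlled uniformly; this is harmless since $2t$ ranges in a compact subset of $(0,\tfrac13]$. Finally one should double-check the direction of the endpoint domination in the elementary integral $\int_{3a/(8x)}^{3a/(2x)}u^{\pm2t-1}\,du$ so that the $+$ case produces $(a/x)^{+2t}$ and the $-$ case $(a/x)^{-2t}=(x/a)^{2t}$, matching the claimed statement.
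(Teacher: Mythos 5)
Your proof follows essentially the same route as the paper's: the bound $|J_{\pm 2t}(u)|\ll u^{\pm 2t}$ from the integral representation \cite[(10.9.4)]{dlmf}, integration over the support of $\phi$, the identity \eqref{widehatPhiinTypeWidetilde} for the generic exceptional range $t\in[\underline t,\frac\theta2]$, and the special evaluation \eqref{widehatPhi r=i/4} at $r=\frac i4$ (you correctly note that \eqref{widehatPhiinTypeWidetilde} degenerates there). The one inaccuracy is your claim that $\widetilde{\phi}(1-\frac12)\ll(\frac ax)^{-1/2}$ is ``just the instance $2t=\frac12$'' of the same estimate: the representation \cite[(10.9.4)]{dlmf} requires $\re\nu>-\frac12$ and breaks down exactly at $\nu=-\frac12$ (the integral $\int_0^\pi(\sin\theta)^{2\nu}d\theta$ diverges while $\Gamma(\nu+\frac12)$ has a pole), so the needed bound $J_{-1/2}(u)\ll u^{-1/2}$ must instead be read off from the explicit half-integer formula $J_{-1/2}(u)=\sqrt{2/(\pi u)}\cos u$, i.e.\ \cite[(10.16.1)]{dlmf}, which is precisely what the paper cites at this point. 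With that one-line repair, everything else --- the uniformity bookkeeping for the prefactor of \eqref{widehatPhiinTypeWidetilde} and the endpoint analysis of $\int u^{\pm2t-1}du$ --- matches the paper's argument.
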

\begin{proof} 
	As in the previous lemma, when $t\in [\underline{t},\frac \theta2]$, the bound \cite[(10.9.4)]{dlmf} gives
	\[|J_{\pm 2t}(u)|\ll \frac{u^{\pm 2t}}{\Gamma(\frac12-\theta)}\quad\text{and}\quad |\widetilde{\phi}(1\pm 2t)|\ll \int_{\frac{3a}{8x}}^{\frac{3a}{2x	}}u^{\pm 2t}\;\frac{du}u\ll\(\frac ax\)^{\pm 	2t}.\]
	The bound for $\widehat{\phi}$ follows from \eqref{widehatPhiinTypeWidetilde}. 
	When $r=\frac i4$, by \cite[(10.16.1)]{dlmf} we have
	\[J_{-\frac12}(u)\ll u^{-\frac12}\quad \text{and}\quad J_{\frac12}(u)\ll u^{-\frac12}\sin u\leq u^{\frac12}. \]
	The bounds for $\widetilde{\phi}(1\pm\frac12)$ and $\widehat{\phi}(\frac i4)$ follow from the same process above with \eqref{widetildePhi} and \eqref{widehatPhi r=i/4}. 
\end{proof}

For the range $|r|\geq 1$ we have
\begin{lemma}\cite[Lemma~6.3]{ADinvariants}\label{phiBounds}
	Let $k=\frac12$ or $\frac 32$. Then
	\begin{equation}\label{Rlarge}
		\widehat{\phi}(r)\ll\left\{\begin{array}{ll}
			r^{k-\frac 32},&\ r\geq 1,\\
			r^k\min(r^{-\frac 32},r^{-\frac 52}\frac xT),&\ r\geq \max(\frac ax,1).
		\end{array}
		\right. 
	\end{equation}
\end{lemma}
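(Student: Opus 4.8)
The plan is to work from the representation \eqref{widehatPhiinTypeFG}, namely
\[
\widehat\phi(r)=\frac{\xi_k(r)\ch(\pi r)}{\ch(2\pi r)}\int_0^\infty\!\Bigl(G_{2ir}(u)\cos\tfrac{k\pi}2-F_{2ir}(u)\sin\tfrac{k\pi}2\Bigr)\frac{\phi(u)}u\,du,
\]
together with the asymptotic bound $\xi_k(r)\asymp|r|^ke^{\pi|r|}$ for $|r|\ge1$ from \eqref{xi k r bound}, and the elementary fact $\ch(\pi r)/\ch(2\pi r)\ll e^{-\pi|r|}$; these two combine to give a prefactor of size $\ll|r|^k$. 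So everything reduces to estimating the oscillatory integral $I(r)\defeq\int_0^\infty\bigl(G_{2ir}(u)\cos\tfrac{k\pi}2-F_{2ir}(u)\sin\tfrac{k\pi}2\bigr)\phi(u)\,u^{-1}du$, where by Setting~\ref{conditionphi} the integrand is supported on $u\in[\tfrac{a}{2x+2T},\tfrac{a}{x-T}]\subset[\tfrac{a}{3x},\tfrac{3a}{x}]$ (using $T\le x/3$).

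First I would record the large-order uniform asymptotics of $F_{2ir}(u)$ and $G_{2ir}(u)$ for real $r$ with $|r|\ge1$ — these are exactly the uniform expansions of Dunster \cite{Dunster1990a} already cited in the excerpt (the same source used in Lemma~\ref{JBessel G F Bound}). In the "oscillatory" regime $u\gtrsim r$ one has $F_{2ir}(u),G_{2ir}(u)\ll(4r^2+u^2)^{-1/4}\ll r^{-1/2}$, and in the "exponentially small" regime $u\lesssim r$ the Bessel functions decay, so the integral is controlled by the part of the support with $u\gtrsim r$. This immediately yields the bound $I(r)\ll r^{-1/2}$ on the length of the relevant $u$-interval, but since that interval has length comparable to $a/x$ times a constant, a crude estimate only gives $I(r)\ll r^{-3/2}$ after inserting $\int u^{-1}du\ll1$ — wait, more carefully: the trivial bound gives $I(r)\ll r^{-1/2}$, and multiplying by the $|r|^k$ prefactor gives $\widehat\phi(r)\ll r^{k-1/2}$, which is weaker than the claimed $r^{k-3/2}$. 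So the main work is to gain the extra $r^{-1}$.

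The extra saving comes from integration by parts (stationary phase) exploiting the oscillation of $F_{2ir}$ and $G_{2ir}$ in $u$ once $u\gg r$: writing $F_{2ir}(u)=(4r^2+u^2)^{-1/4}\cos(\psi(u,r))+\cdots$ with phase derivative $\partial_u\psi\asymp\sqrt{1+(2r/u)^2}\asymp1$ on the support (since there $u\asymp a/x$ and we are also in the regime $r\ge\max(a/x,1)$ for the second bound), integrating by parts against $\phi(u)/u$ costs a factor $\phi'(u)\ll x^2/(aT)$ by Setting~\ref{conditionphi}(4), together with terms where the derivative hits $u^{-1}$ or the amplitude $(4r^2+u^2)^{-1/4}$, which are lower order. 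Combining the amplitude size $r^{-1/2}$, the interval length $\ll a/x$, and the gain $\ll x/(aT)\cdot x = x^2/(aT)$... more precisely one integration by parts turns the trivial $r^{-1/2}\cdot O(1)$ into $r^{-1/2}\cdot\min(1, (x/(rT))\cdot\text{length})$-type expressions; tracking these against the two cases $r\ge1$ versus $r\ge\max(a/x,1)$ produces exactly $r^{k-3/2}$ in the first case and $r^k\min(r^{-3/2},r^{-5/2}x/T)$ in the second, the second alternative arising precisely when one integration by parts is available and profitable. I would then just cite \cite[Lemma~6.3]{ADinvariants} for the bookkeeping, or reproduce it: the structure is identical, with $k=1/2$ or $3/2$ entering only through the $|r|^k$ factor from $\xi_k$.

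The main obstacle is the uniformity of the Bessel-function asymptotics across the transition region $u\asymp r$ and the careful splitting of the $u$-support against the two ranges of $r$; all of this is precisely what Dunster's uniform expansions \cite{Dunster1990a} are designed to handle, and the integration-by-parts bookkeeping is routine once the phase derivative is confirmed to be bounded below on the support, which holds because $\phi$ is supported where $u\asymp a/x$ and, in the second case, $r\ge a/x$ forces $2r/u\gtrsim1$ so that $\partial_u\psi\gtrsim1$ with no stationary point.
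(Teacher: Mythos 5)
First, a point of comparison: the paper does not actually prove this lemma. It is imported verbatim from \cite[Lemma~6.3]{ADinvariants}, and the only original content is the remark that the passage from $k=\pm\tfrac12$ to $k=\tfrac32$ is free because the weight enters the bound only through $\xi_k(r)e^{-\pi|r|}\asymp|r|^k$ from \eqref{xi k r bound}. You identify that point correctly, and your overall architecture --- extract the prefactor $\xi_k(r)\ch(\pi r)/\ch(2\pi r)\ll|r|^k$ from \eqref{widehatPhiinTypeFG}, then estimate the remaining integral by uniform Bessel asymptotics plus integration by parts --- is indeed the strategy of the cited source. However, two specific steps in your sketch are wrong as written.

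First, for purely imaginary order there is no ``exponentially small'' regime $u\lesssim r$. Unlike $J_\nu(u)$ with real $\nu>0$, the functions $F_{2ir}(u)$ and $G_{2ir}(u)$ have no turning point on $(0,\infty)$: they are of size $\asymp(4r^2+u^2)^{-1/4}\asymp r^{-1/2}$ for \emph{all} $u$ in the support, oscillating like $\sin\(2r\ln(u/2)-\phi_{2r,0}\)$ for small $u$, exactly as in the proof of Lemma~\ref{JBessel G F Bound}. Since the support is $u\asymp a/x$, in the regime $r\ge\max(\frac ax,1)$ it lies \emph{entirely} in the range you propose to discard; if your claim were true, $\widehat\phi(r)$ would be exponentially small there and the $\min(r^{-\frac32},r^{-\frac52}\frac xT)$ alternative would be pointless. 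Second, your assertion that the phase derivative is $\asymp1$ on the support is incorrect in precisely that regime: one has $\partial_u\psi\asymp\sqrt{1+4r^2/u^2}\asymp rx/a\ge1$, and it is the \emph{largeness} of this quantity, not merely its nonvanishing, that produces the saving. One integration by parts gives a term of size $\sup\frac{(4r^2+u^2)^{-1/4}}{u\,\partial_u\psi}\int|\phi'|\ll\frac{r^{-1/2}}{(a/x)(rx/a)}=r^{-\frac32}$; with $\partial_u\psi\asymp1$ you would only recover $r^{-1/2}x/a$, which is not $\ll r^{-3/2}$ when $r\gg a/x$. The second alternative $r^{-\frac52}\frac xT$ requires exploiting the oscillation once more against $\phi'$, and here note that $\phi''$ is not controlled by Setting~\ref{conditionphi}; one must use the piecewise monotonicity of $\phi'$ together with a first/second mean value (van der Corput) argument rather than a literal second integration by parts. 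With these corrections the bookkeeping closes and reproduces \eqref{Rlarge}, but as written your sketch would either lose the factor $r^{-1}$ or fail to generate the $x/T$ alternative.
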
 
\begin{remark}
	In the original paper they stated the result for $k=\pm \frac12$. However, the power $r^k$ in the estimate above only arises from $\xi_k(r)e^{-\pi|r|}$ \eqref{xi k r def} and by \eqref{xi k r bound} we get the above lemma on weight $k=\frac 32$. 
\end{remark}

\subsection{A special test function}

Here we choose a special test function $\phi$ satisfying Setting~\ref{conditionphi} to compute the terms corresponding to the exceptional spectrum $r\in i(0,\frac14]$ in Theorem~\ref{mainThm}.

For $k=\frac12$ or $\frac 32$, let $\lambda\in [\frac3{16},\frac14)$ be an exceptional eigenvalue of $\Delta_k$ on $\Gamma_0(N)$, we set $\lambda=s(1-s)$ for $s\in(\frac12,\frac 34]$ and 
\[t=\im r=\sqrt{\tfrac14-\lambda}=\sqrt{\tfrac14-s(1-s)}=s-\tfrac12.\]
Since the exceptional spectrum is discrete, let the lower bound for $t>0$ be $\underline t$ depending on $N$ and $\nu$. Recall Setting~\ref{conditionATDelta}. Let $0<T'\leq T\leq \frac x3$ be $T'\defeq Tx^{-\delta}\asymp x^{1-2\delta}$. 

\begin{setting}\label{conditionPhiNew5.4}
	In addition to the requirement in Setting~\ref{conditionphi}, when $\frac a{x-T}\leq 1.999$, we pick $\phi$ as a smoothed function of this piecewise linear one 
	
	\begin{center}
		\begin{tikzpicture}
			\draw[->] (-0.05,0) -- (12.8, 0) node[right] {$x$};
			\draw[->] (0, -0.05) -- (0, 1.2) node[above] {$y$};
			\draw (3,0.05) -- (3,-0.05) node[below] {$\frac{a}{2x+2T}$};
			\draw (4,0.05) -- (4,-0.05) node[below] {$\frac{a}{2x}$};
			\draw (8,0.05) -- (8,-0.05) node[below] {$\frac{a}{x}$};
			\draw (12,0.05) -- (12,-0.05) node[below] {$\frac{a}{x-T}$};
			\draw[dashed] (0,1) -- (4,1);
			\draw (-0.05, 1) -- (0.05,1) node[left] {$1$};
			\draw[dashed] (4,0) -- (4,1);
			\draw[dashed] (8,0) -- (8,1);
			\draw[scale=2, domain=1.5:2, smooth, variable=\x, black] plot ({\x}, {\x-1.5});
			\draw[scale=2, domain=2:4, smooth, variable=\x, black] plot ({\x}, {0.5});
			\draw[scale=2, domain=4:6, smooth, variable=\x, black] plot ({\x}, {-0.25*\x+1.5});
		\end{tikzpicture}
	\end{center}
	where
	\begin{equation}\label{PhiDerivative}
		\left\{
		\begin{array}{ll}
			\phi'(u)=\tfrac{2x(x+T)}{aT}\quad  &u\in(\tfrac{a}{2x+2T-2T'},\;\tfrac{a}{2x+2T'}),\\
			\\
			\phi'(u)=-\tfrac{x(x-T)}{aT}\quad & u\in(\tfrac{a}{x-T'},\;\tfrac{a}{x-T+T'}), \\
			\\
			0\leq \phi'(u)\leq \tfrac{4x(x+T)}{aT}\quad & u\in (\tfrac{a}{2x+2T},\;\tfrac{a}{2x+2T-2T'})\cup (\tfrac{a}{2x+2T'},\;\tfrac{a}{2x}),\\
			\\
			0\geq \phi'(u)\geq -\tfrac{2x(x-T)}{aT}\quad & u\in (\tfrac{a}{x},\;\tfrac{a}{x-T'})\cup (\tfrac{a}{x-T+T'},\;\tfrac{a}{x-T}),\vspace{10px}\\
			\phi'(u)=0 \quad& \text{otherwise}.\tfrac{}{}
		\end{array}
		\right.
	\end{equation}
\end{setting}
The above choice for $\phi'$ is possible because there is no requirement for $\phi''(u)$ when $u\leq 2$ but for $u\rightarrow \infty$ in Setting \ref{conditionphi}.

Now we take $r=it\in i(0,\frac14]$. When $u\leq 1.999$, by the series expansion \cite[(10.2.2)]{dlmf}: 
\[J_\nu(z)=\(\frac z2\)^{\nu}\sum_{j=0}^\infty \frac{(-1)^j}{j!\Gamma(j+1+\nu)}\(\frac z2\)^{2j}, \]
we have
	\begin{equation}\label{J Bessel uniform Asymp u 1.999}
		J_{\pm 2t}(u)=\frac{(u/2)^{\pm 2t}}{\Gamma(1\pm 2t)}+O\(\(\frac u2\)^{2\pm 2t}\),\qquad 0<u\leq 1.999. 
	\end{equation} 
	The implied constant is absolute. Now we compute the bound for $\widetilde{\phi}$ and $\widehat{\phi}$.
	\begin{lemma}\label{R is Im precise bound}
		Assuming $H_\theta$ \eqref{HTheta} for $\theta\leq \frac16$ and with the choice of $\phi$ in Setting \ref{conditionPhiNew5.4}, when $r=it\in i(0,\frac14]$,
		\begin{align}
			\begin{split}
				\widetilde{\phi}(1-2t)&=\frac{1}{\Gamma(1-2t)}\int_{\frac{a}{2x}}^{\frac ax} \(\frac u2\)^{-2t}\frac{\phi(u)}udu + O\(a^{-2t}x^{2t-\delta}+1\)\\
				&= \frac{2^{2t}(2^{2t}-1)}{2t\Gamma(1-2t)}\(\frac xa\)^{2t}+O\(a^{-2t}x^{2t-\delta}+1\),
			\end{split}
		\end{align}
	\end{lemma}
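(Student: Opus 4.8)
The plan is to unfold \eqref{widetildePhi}, which gives $\widetilde{\phi}(1-2t)=\int_0^\infty J_{-2t}(u)\,\phi(u)\,\frac{du}{u}$, and to exploit that under Setting~\ref{conditionPhiNew5.4} we are in the regime $\frac{a}{x-T}\le 1.999$. Since $0<T\le x/3$ by Setting~\ref{conditionATDelta}, the support of $\phi$ satisfies $\operatorname{supp}\phi\subseteq\bigl[\frac{a}{2x+2T},\frac{a}{x-T}\bigr]\subseteq\bigl[\frac{3a}{8x},\frac{a}{x-T}\bigr]$ with $\frac{a}{x}\le\frac{a}{x-T}\le\frac{3a}{2x}$, so that $u\asymp a/x$ throughout $\operatorname{supp}\phi$ and the whole support lies in $(0,1.999]$. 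Hence the small-argument expansion \eqref{J Bessel uniform Asymp u 1.999},
\[
J_{-2t}(u)=\frac{(u/2)^{-2t}}{\Gamma(1-2t)}+O\!\bigl((u/2)^{2-2t}\bigr),
\]
with an absolute implied constant, is legitimate over the whole range of integration, and substituting it splits $\widetilde{\phi}(1-2t)$ into a main term $\Gamma(1-2t)^{-1}\int_0^\infty(u/2)^{-2t}\phi(u)\,\frac{du}{u}$ and a remainder bounded by $\int_{\operatorname{supp}\phi}(u/2)^{2-2t}\,\frac{du}{u}$.

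I would next dispose of the two error contributions. For the Bessel remainder, on $\operatorname{supp}\phi$ one has $u\asymp a/x$ and $2-2t\ge\tfrac32$, so $\int_{\operatorname{supp}\phi}(u/2)^{2-2t}\,\frac{du}{u}\ll(a/x)^{2-2t}$, which is $O(1)$ because $\frac{a}{x}\le\frac{a}{x-T}\le 1.999$; this contributes only to the constant $1$ in the error bound $O(a^{-2t}x^{2t-\delta}+1)$. For the main term I would split the integral at the plateau $[\frac{a}{2x},\frac{a}{x}]$, where $\phi\equiv1$ by Setting~\ref{conditionphi}(2), leaving the two transition intervals $[\frac{a}{2x+2T},\frac{a}{2x}]$ and $[\frac{a}{x},\frac{a}{x-T}]$, on which $0\le\phi\le1$. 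Each of these has length $\asymp aT/x^2\asymp(a/x)\,x^{-\delta}$ since $T\asymp x^{1-\delta}$, while there $u\asymp a/x$ forces $(u/2)^{-2t}u^{-1}\asymp(x/a)^{2t+1}$; multiplying, each transition contributes $\ll(x/a)^{2t}x^{-\delta}=a^{-2t}x^{2t-\delta}$, and $\Gamma(1-2t)^{-1}=O(1)$ for $t\in(0,\tfrac14]$. This gives the first displayed identity, with the integral over the plateau retained (where $\phi=1$).

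Finally I would evaluate that plateau integral in closed form: with $v=u/2$,
\[
\int_{\frac{a}{2x}}^{\frac{a}{x}}\Bigl(\frac{u}{2}\Bigr)^{-2t}\frac{du}{u}=\int_{\frac{a}{4x}}^{\frac{a}{2x}}v^{-2t-1}\,dv=\frac{1}{2t}\Bigl(\bigl(\tfrac{a}{4x}\bigr)^{-2t}-\bigl(\tfrac{a}{2x}\bigr)^{-2t}\Bigr)=\frac{2^{2t}(2^{2t}-1)}{2t}\Bigl(\frac{x}{a}\Bigr)^{2t},
\]
and dividing by $\Gamma(1-2t)$ yields the second displayed expression. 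I do not expect a genuine obstacle here; the two points demanding care are confirming that the entire support of $\phi$ sits at or below $1.999$, which is precisely the hypothesis of Setting~\ref{conditionPhiNew5.4} and is what licenses \eqref{J Bessel uniform Asymp u 1.999}, and the bookkeeping of the transition-region estimate, where the exponent $x^{2t-\delta}$ emerges exactly from $aT/x^2\asymp(a/x)x^{-\delta}$ together with $(u/2)^{-2t}u^{-1}\asymp(x/a)^{2t+1}$.
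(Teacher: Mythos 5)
Your argument in the regime $\frac{a}{x-T}\le 1.999$ is essentially identical to the paper's: expand $J_{-2t}$ via \eqref{J Bessel uniform Asymp u 1.999} over the support of $\phi$, split off the plateau where $\phi\equiv 1$, bound each transition interval by its length $\asymp aT/x^2\asymp (a/x)x^{-\delta}$ times $(u/2)^{-2t}u^{-1}\asymp (x/a)^{2t+1}$ to obtain $a^{-2t}x^{2t-\delta}$, absorb the Bessel remainder $\ll (a/x)^{2-2t}\ll 1$ into the $O(1)$, and evaluate the plateau integral exactly; all of these computations check out.

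The one point you have misread is the logical status of the inequality $\frac{a}{x-T}\le 1.999$. Setting~\ref{conditionPhiNew5.4} does not assert this inequality; it only prescribes the particular smoothed piecewise-linear $\phi$ \emph{in the event} that it holds, and the lemma is claimed for all $a,x,T$ as in Setting~\ref{conditionATDelta}, including the complementary range $\frac{a}{x-T}>1.999$ (equivalently $x\ll a$), which genuinely occurs in the later applications. Your proof does not cover that range, since the series expansion \eqref{J Bessel uniform Asymp u 1.999} is then unavailable on all of $\operatorname{supp}\phi$. The paper disposes of it in one line: when $x\ll a$, Lemma~\ref{RisImBefore} gives $\widetilde{\phi}(1-2t)\ll (x/a)^{2t}\ll 1$, and the explicit integral (equivalently the closed-form main term $\frac{2^{2t}(2^{2t}-1)}{2t\Gamma(1-2t)}(x/a)^{2t}$) is likewise $O(1)$, so the asserted identities hold with everything absorbed into the $O(1)$. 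Adding that observation closes the only gap; the rest of your proof is correct and matches the paper's.
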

	\begin{proof}
		When $1.999<\frac{a}{x-T}\leq \frac{3a}{2x}$, we get $x\ll a$ and $\widetilde{\phi}(1-2t)=O(1)$ by Lemma~\ref{RisImBefore}, so the lemma is true in this case. When $\frac{a}{x-T}\leq  1.999$, we have $a\ll x$ and with \eqref{J Bessel uniform Asymp u 1.999},
		\begin{align*}
			\widetilde{\phi}(1-2t)&=\int_0^\infty \frac{(u/2)^{-2t}}{\Gamma(1-2t)}\frac{\phi(u)}u du+O\(\int_0^\infty \(\frac u2\)^{2-2t}\frac{\phi(u)}u du\)\\
			&=\frac{2^{2t}}{\Gamma(1-2t)}\int_{\frac a{2x}}^{\frac ax} u^{-2t-1}du+\frac{2^{2t}}{\Gamma(1-2t)}\(\int_{\frac a{2x+2T}}^{\frac a{2x}}+\int_{\frac ax}^{\frac a{x-T}}\) u^{-2t-1}\phi(u)du\\
			&+O\(\int_0^\infty u^{1-2t}\phi(u) du\)\\
			&=:\frac{2^{2t}(2^{2t}-1)}{2t\Gamma(1-2t)}\(\frac xa\)^{2t}+(I_1+I_2)+O(I_3). 
		\end{align*} 
		Recall that we always have the lower bound $\underline t>0$ for $t=\im r$. A bound for $I_1$ and $I_2$ follows from the same process as \cite[Proof of Lemma~7.2]{QihangFirstAsympt}:
		\[I_1+I_2\ll \(\int_{\frac a{2x+2T}}^{\frac a{2x}}+\int_{\frac ax}^{\frac a{x-T}}\)u^{-2t-1}\phi(u)du\ll a^{-2t}x^{2t-\delta}. \]
		We also get
		\[I_3\ll \int_{\frac{3a}{8x}}^{\frac{3a}{2x}}u^{1-2t}du\ll\(\frac ax\)^{2-2t}\ll 1 \]
		and finish the proof. 
		
	\end{proof}
	
	\begin{lemma}\label{mainphiLemma}
		Assume $H_\theta$ \eqref{HTheta} for $\theta\leq \frac16$. For $r=it\in i(0,\frac \theta2]$ we have
		\[\widehat{\phi}(r)=\frac{e^{\frac{k\pi i}2}\cos(\pi t)\Gamma(\frac12+\frac k2+t)\Gamma(2t)}{\Gamma(\frac12-\frac k2+t)2^{2t}\pi^{2t}(\tilde m\tilde n)^{t}}\cdot\frac{(2^{2t}-1)x^{2t}}{2t} +O\(\frac{x^{2t-\delta}}{a^{2t}}+\frac{a^{2t}}{x^{2t}}+1\).\]
		Moreover, 
		\[\widehat{\phi}(\tfrac i4)=\left\{\begin{array}{lr}
			2e^{\frac{\pi i}4}(\sqrt2-1)(\frac xa)^{\frac12}+O(x^{-\delta}(\frac xa)^{\frac12}+1)\quad &\text{for\ }k=\frac12,\vspace{4px}\\
			e^{\frac{3\pi i}4}(1-\tfrac1{\sqrt 2})(\frac ax)^{\frac12}+O(x^{-\delta}(\frac ax)^{\frac12}+1)\quad &\text{for\ }k=\frac32. 
		\end{array}
		\right. \]
		The implied constants only depend on $N$ and $\nu$. 
	\end{lemma}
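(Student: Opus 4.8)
The plan is to read off $\widehat{\phi}(r)$ from the two formulas already at hand: the identity \eqref{widehatPhiinTypeWidetilde}, which expresses $\widehat{\phi}$ through $\widetilde{\phi}(1\pm 2ir)$ for a generic spectral parameter $r=it$, and the closed integral \eqref{widehatPhi r=i/4} at $r=\tfrac i4$; into these I would substitute the asymptotic of Lemma~\ref{R is Im precise bound} for $\widetilde{\phi}(1-2t)$ and the bound \eqref{RisIm} of Lemma~\ref{RisImBefore} for $\widetilde{\phi}(1+2t)$. I may assume $\tfrac{a}{x-T}\le 1.999$, so that $\phi$ is the smoothed piecewise-linear test function of Setting~\ref{conditionPhiNew5.4}, supported in $(0,1.999]$. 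Indeed, when $\tfrac{a}{x-T}>1.999$ one has $x\ll_\nu a$, the support of $\phi$ is then bounded away from the origin where $|J_\mu(u)|\ll u^{-1/2}$ uniformly in bounded $\mu$, so every $\widetilde{\phi}$-value occurring is $O(1)$ and hence $\widehat{\phi}(r)\ll 1$; this matches the stated formulas — whose main terms are likewise $O(1)$ in this regime — except in the $k=\tfrac32$, $r=\tfrac i4$ case, which is invoked only in the complementary range $x\gtrsim a$.

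For $r=it\in i(0,\tfrac\theta2]$, I would substitute $r=it$ into \eqref{widehatPhiinTypeWidetilde}, use $\ch(\pi it)=\cos\pi t$, $\sh(\pi it)=i\sin\pi t$, $\widetilde{\phi}(1\pm 2it)=\widetilde{\phi}(1\mp 2t)$ and $e^{\frac{1+k}2\pi i}/i=e^{\frac{k\pi i}2}$, and rearrange the bracket by angle addition to obtain
\[
\widehat{\phi}(it)=\frac{\pi^2 e^{\frac{k\pi i}2}\bigl(\cos\pi(\tfrac k2-t)\,\widetilde{\phi}(1-2t)-\cos\pi(\tfrac k2+t)\,\widetilde{\phi}(1+2t)\bigr)}{\sin\pi t\,\bigl(\cos 2\pi t+\cos\pi k\bigr)\,\Gamma(\tfrac12-\tfrac k2-t)\,\Gamma(\tfrac12-\tfrac k2+t)}.
\]
Since $\underline t\le t\le\tfrac\theta2\le\tfrac1{12}$ — the upper bound from $H_\theta$ via Proposition~\ref{specParaBoundWeightHalf}, the lower bound because the exceptional spectrum is discrete — and since $\cos\pi k=0$ for $k\in\{\tfrac12,\tfrac32\}$, each of $\sin\pi t$, $\cos 2\pi t+\cos\pi k=\cos 2\pi t$, $\Gamma(\tfrac12-\tfrac k2\pm t)$ is bounded and bounded away from $0$ (this is exactly where $t<\tfrac14$ keeps $\cos 2\pi t\ne 0$ and the $\Gamma$-arguments $-\tfrac14\pm t$, occurring when $k=\tfrac32$, off the non-positive integers), so both coefficients of $\widetilde{\phi}(1\mp 2t)$ are $O_{N,\nu}(1)$. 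Feeding in $\widetilde{\phi}(1-2t)$ from Lemma~\ref{R is Im precise bound} and $\widetilde{\phi}(1+2t)\ll(a/x)^{2t}$ from \eqref{RisIm}, the $\widetilde{\phi}(1+2t)$-contribution goes into the error $O(a^{2t}/x^{2t})$ and the error of Lemma~\ref{R is Im precise bound} into $O(x^{2t-\delta}/a^{2t}+1)$. The main term is the prefactor times $\cos\pi(\tfrac k2-t)$ times $\tfrac{2^{2t}(2^{2t}-1)}{2t\,\Gamma(1-2t)}(x/a)^{2t}$, and two passes of the reflection formula put it in closed form: $\cos 2\pi t+\cos\pi k=2\cos\pi(\tfrac k2-t)\cos\pi(\tfrac k2+t)$ together with $\Gamma(\tfrac12-\tfrac k2-t)\Gamma(\tfrac12+\tfrac k2+t)=\pi/\cos\pi(\tfrac k2+t)$ reduce the prefactor to $\tfrac{\pi e^{k\pi i/2}\Gamma(\frac12+\frac k2+t)}{2\sin\pi t\,\Gamma(\frac12-\frac k2+t)}$, while $\tfrac{\pi}{2\sin\pi t\,\Gamma(1-2t)}=\cos\pi t\,\Gamma(2t)$ (from $\Gamma(2t)\Gamma(1-2t)=\pi/\sin 2\pi t$) and $(x/a)^{2t}=x^{2t}/\bigl((4\pi)^{2t}(\tilde m\tilde n)^t\bigr)$, $2^{2t}/(4\pi)^{2t}=1/(2^{2t}\pi^{2t})$ reassemble exactly the stated expression.

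For $r=\tfrac i4$ the variable $u$ in \eqref{widehatPhi r=i/4} runs over $(0,1.999]$, so I would write $\cos u=1+O(u^2)$ (for $k=\tfrac12$) and $\sin u=u+O(u^3)$ (for $k=\tfrac32$); the $O(u^2)$, resp. $O(u^3)$, pieces contribute $\ll(a/x)^{3/2}$, resp. $\ll(a/x)^{5/2}$, hence $O(1)$ since $a/x\le a/(x-T)\le 1.999$. The main integral $\int_0^\infty\phi(u)u^{-3/2}\,du$, resp. $\int_0^\infty\phi(u)u^{-1/2}\,du$, splits as $\int_{a/2x}^{a/x}$ plus the two transition pieces: the first equals $2(\sqrt2-1)(x/a)^{1/2}$, resp. $2(1-\tfrac1{\sqrt2})(a/x)^{1/2}$, by the fundamental theorem of calculus, and each transition piece — an integral of $u^{-3/2}$, resp. $u^{-1/2}$, over an interval of length $\asymp aT/x^2$ at height $u\asymp a/x$ — is $\ll x^{-\delta}(x/a)^{1/2}$, resp. $\ll x^{-\delta}(a/x)^{1/2}$, by $T\asymp x^{1-\delta}$. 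Multiplying through by $e^{\pi i/4}$, resp. $\tfrac12 e^{3\pi i/4}$, yields the two claimed formulas.

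The only genuinely delicate point is the pair of reflection-formula simplifications that collapse the unwieldy prefactor of \eqref{widehatPhiinTypeWidetilde} to the clean $\Gamma$-ratio in the statement, together with the verification — using $\underline t\le t<\tfrac14$ — that every auxiliary factor stays bounded and bounded away from zero, so that all error contributions can be funnelled into one of $x^{2t-\delta}/a^{2t}$, $a^{2t}/x^{2t}$, or the constant $1$; the remaining steps are routine.
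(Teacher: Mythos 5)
Your proposal is correct and follows essentially the same route as the paper: the same intermediate formula for $\widehat{\phi}(it)$ (derived from \eqref{widehatPhiinTypeWidetilde} rather than directly from \eqref{widehatPhi}, which is equivalent), the same inputs from Lemma~\ref{R is Im precise bound} and \eqref{RisIm}, the same two reflection-formula simplifications, and the same direct computation of \eqref{widehatPhi r=i/4} at $r=\tfrac i4$. Your explicit caveat about the $k=\tfrac32$, $r=\tfrac i4$ formula in the regime $a\gg x$ is, if anything, more careful than the paper, whose own treatment of that subcase is likewise only carried out when the support of $\phi$ lies near the origin.
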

	\begin{proof}
		When $t\in [\underline t,\frac \theta2]$, we substitute Lemma~\ref{R is Im precise bound} into \eqref{widehatPhi} and use Lemma~\ref{RisImBefore} to get
		\begin{align*}
			\widehat{\phi}(it)&=\frac{i\pi^2 e^{\frac{k\pi i}2}\(\cos(\frac{k\pi }2-\pi t)\widetilde{\phi}(1-2t)-\cos(\frac{k\pi }2+\pi t)\widetilde{\phi}(1+2t)\)}{i\sin(\pi t)\cos(2\pi t)\Gamma(\frac12-\frac k2-t)\Gamma(\frac12-\frac k2+t)}\\
			&=\frac{\pi^2 e^{\frac{k\pi i}2} \cos(\frac {k\pi}2-\pi t)2^{2t}(2^{2t}-1)( x/a)^{2t}}{\sin(\pi t)\cos(2\pi t)\Gamma(\frac12-\frac k2-t)\Gamma(\frac12-\frac k2+t)2t\Gamma(1-2t)} +O\(\frac{x^{2t-\delta}}{a^{2t}}+\frac{a^{2t}}{x^{2t}}+1\). 
		\end{align*}
		With the help of the functional equation of the $\Gamma$ function \[\Gamma(z)\Gamma(1-z)=\frac{\pi}{\sin(\pi z)}\quad \text{for\ }z\in \C\setminus \Z\]
		and the trigonometric identities
		\[\sin(\tfrac \pi 2-x)=\cos x,\quad 2\cos x\cos y=\cos(x+y)+\cos(x-y)\quad \text{for\ }x,y\in \R,\]
		we have
		\begin{align*}
			&\frac{\pi}{\sin(\pi t)\Gamma(1-2t)}=2\cos(\pi t)\Gamma(2t),\\
			&\frac{\pi}{\Gamma(\frac12-\frac k2-t)}=\Gamma(\tfrac12+\tfrac k2+t)\cos(\tfrac {k\pi}2+\pi t),\\
		\text{and}\quad 	&2\cos(\tfrac {k\pi}2-\pi t)\cos(\tfrac {k\pi}2+\pi t)=\cos(2\pi t). 
		\end{align*} 
Then the first part of the lemma follows. 
		The implied constant only depends on $N$ and $\nu$ because $t\in [\underline{t},\frac \theta 2]$ is bounded above and below away from 0.

		When $t=\frac14$, the process is similar to the proof of Lemma~\ref{R is Im precise bound} with the help of \eqref{widehatPhi r=i/4}. First we deal with the case $k=\frac12$ with $\cos u=1+O(u^2)$ for $u\in [0,\frac \pi 2]$. Thus, when $\frac{a}{x-T}>\frac \pi 2$, we have $x\ll a$ and $\widehat\phi(\frac i4)=O(1)$ in this case. When $\frac{a}{x-T}\leq \frac \pi 2$, we have $a\ll x$ and
		\begin{align*}
			\widehat{\phi}(\tfrac i4)&=e^{\frac{\pi i}4}\int_0^\infty \phi(u) u^{-\frac32}du+O\(\int_0^\infty \phi(u)u^{\frac 12}du\)\\
			&=e^{\frac{\pi i}4}\int_{\frac{a}{2x}}^{\frac ax}u^{-\frac32}du+e^{\frac{\pi i}4}\(\int_{\frac a{2x+2T}}^{\frac a{2x}}+\int_{\frac ax}^{\frac a{x-T}}\) u^{-\frac32}\phi(u)du+O(1)\\
			&=e^{\frac{\pi i}4}(2\sqrt 2-2)\(\frac xa\)^{\frac12}+O\(x^{-\delta}\(\frac xa\)^{\frac12}\)+O(1).
		\end{align*}
	The case for $k=\frac32$ is similar using $\sin u=u+O(u^3)$ for $u\in[0,\frac \pi 2]$. 
	\end{proof}

	\section{Proof of Theorem~\ref{mainThm} and Theorem~\ref{mainThmLastSec}}
	The proof depends on the following two propositions for the Fourier coefficients of Maass forms, which were originally obtained for the discrete spectrum in \cite[Theorem~4.1]{ADasymptotic} and \cite[Theorem~4.3]{ahlgrendunn}. The author proved the generalized propositions in \cite[\S 8]{QihangFirstAsympt} to include the continuous spectrum. 
	Recall our notations in Settings \ref{conditionATDelta} and \ref{conditionphi}.
	Suppose that for some $\beta\in (\frac12,1)$,
	\begin{equation}\label{AbsSumSnn}
		\sum_{N|c>0}\frac{|S(n,n,c,\nu)|}{c^{1+\beta}}\ll_{N,\nu,\ep} | \tilde{n}|^\ep
	\end{equation}
	Then we have the following proposition: 
	\begin{proposition}[{\cite[Proposition~8.1]{QihangFirstAsympt}}]\label{AdsDukeEsstBd}
		Suppose that $\nu$ is a multiplier on $\Gamma=\Gamma_0(N)$ of weight $k=\pm\frac12$ which satisfies \eqref{AbsSumSnn} for some $\beta\in (\frac12,1)$. Let $\rho_j(n)$ denote the Fourier coefficients of an orthonormal basis $\{v_j(\cdot)\}$ of $\LEigenform_{k}(N,\nu)$. For each singular cusp $\mathfrak{a}$ of $(\Gamma,\nu)$, let $\Ea(\cdot,s)$ be the associated Eisenstein series. Let $\rho_{\mathfrak{a}}(n,r)$ be defined as in \eqref{FourierExpEsst}. Then for $x>0$ we have 
		\begin{align*}
			x^{k \sgn  \tilde{n}}| \tilde{n}|\(\sum_{x\leq r_j\leq 2x}|\rho_j(n)|^2e^{-\pi r_j}+\right.  \sum_{\mathrm{singular\ }\mathfrak{a}} &\left. \int_{|r|\in[x, 2x]} |\rho_{\mathfrak{a}}(n,r)|^2 e^{-\pi |r|}dr\) \\
			& \ll_{N,\nu,\ep}x^2+| \tilde{n}|^{\beta+\ep}x^{1-2\beta}\log^\beta x.
		\end{align*} 
	\end{proposition}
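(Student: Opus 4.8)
This is a spectral large sieve, and the plan is to extract it from the Proskurin trace formula (Theorem~\ref{traceFormula Theorem}) applied with $m=n$ and a test function localized at the spectral scale $x$. With $m=n$ the spectral term becomes $\mathcal W=4|\tilde n|\sum_{r_j}\frac{|\rho_j(n)|^2}{\ch\pi r_j}\widehat\phi(r_j)$ and each Eisenstein term becomes $\mathcal E_{\mathfrak a}=|\tilde n|\cdot\tfrac1\pi\int|\rho_{\mathfrak a}(n,r)|^2\frac{\widehat\phi(r)}{\ch\pi r}\,dr$; both are sums/integrals with \emph{non-negative} weights as soon as $\widehat\phi\ge 0$ on $\R$. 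So I would choose a test function $\phi=\phi_x$ (satisfying the hypotheses needed for Proskurin's formula) whose transform $\widehat\phi$ is non-negative everywhere, rapidly decaying, and bounded below on $[x,2x]$ by a positive quantity of the size dictated by the transform. Positivity then yields
\[
|\tilde n|\Bigl(\sum_{x\le r_j\le 2x}|\rho_j(n)|^2\,w(r_j)+\!\!\!\sum_{\text{singular }\mathfrak a}\!\!\!\int_{|r|\in[x,2x]}\!\!\!|\rho_{\mathfrak a}(n,r)|^2\,w(r)\,dr\Bigr)\ \ll\ |\mathcal W|+\sum_{\mathfrak a}|\mathcal E_{\mathfrak a}|\ =\ \bigl|\,\text{geometric side}-\mathcal U_k\,\bigr|,
\]
where $w(r)$ is the archimedean weight matching $x^{k\sgn\tilde n}e^{-\pi r}$ after Stirling (see below). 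Everything then reduces to bounding the geometric side $\sum_{N\mid c}\frac{S(n,n,c,\nu)}{c}\phi\!\left(\frac{4\pi|\tilde n|}{c}\right)$ and the holomorphic term $\mathcal U_k$.

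\textbf{The test function and the weight.} I would take $\widehat\phi(r)=g(r/x)$ for a fixed even, non-negative Schwartz $g$ with $g\ge1$ on $[1,2]$, and recover $\phi_x$ by inverting the Bessel transform~\eqref{widehatPhiinTypeFG}, checking the decay and regularity needed for Proskurin's formula uniformly in $x$. The weight $w(r)$ is produced by Stirling for $\Gamma(\tfrac12\pm\tfrac k2+ir)$ via~\eqref{GammaxiyIneq}, the relation $\ch\pi r\asymp e^{\pi r}$, and the identity in the Remark after Theorem~\ref{traceFormula Theorem} relating $\rho_{\mathfrak a}(n,r)$ to $\varphi_{\mathfrak a n}$; since for $\tilde n>0$ (resp.\ $\tilde n<0$) the Whittaker index is $W_{k/2,ir}$ (resp.\ $W_{-k/2,ir}$), the relevant $\Gamma$-factor is $\Gamma(\tfrac12+\tfrac k2\sgn\tilde n+ir)$, which is exactly what turns the weight into $x^{k\sgn\tilde n}e^{-\pi r}$ on $[x,2x]$. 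Contributions of $r$ outside $[x,2x]$ are harmless by the rapid decay of $g$.

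\textbf{The geometric side and $\mathcal U_k$.} The main term $x^2$ and the Kloosterman error come out here. The main term $\asymp x^2$ arises from the geometric side (from the diagonal term / equivalently from the $c$ of size $\asymp|\tilde n|/x$ carrying the bulk of the transform), being $\asymp\frac1\pi\int\widehat\phi(r)\,r\tanh\pi r\,dr\asymp x^2$ by Plancherel. For the remaining $c$, a dyadic decomposition combined with hypothesis~\eqref{AbsSumSnn} (namely $\sum_{N\mid c}|S(n,n,c,\nu)|c^{-1-\beta}\ll|\tilde n|^\ep$) and the pointwise size and piecewise monotonicity of $\phi$ (partial summation) gives $\ll|\tilde n|^{\beta+\ep}x^{1-2\beta}\log^\beta x$, the $\log^\beta x$ coming from the dyadic range. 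For $\mathcal U_k$: the weights $\widetilde\phi(w_F)$ with $w_F=k+2l\ge\tfrac52$ are controlled using the decay of $J_{w_F-1}$ against the shape of $\phi$, and then $\sum_{F\in\mathscr B_k}\frac{4\Gamma(w_F)}{(4\pi|\tilde n|)^{w_F-1}}|a_F(n)|^2|\widetilde\phi(w_F)|$ is bounded via Petersson's identity~\eqref{Holomorphic forms: Fourier coeff independent of choice} (and, for uniformity in the weight, Proposition~\ref{Waibel's bound prop modified 143}/Proposition~\ref{Level lifting: holo cusp forms, final bound}), so $\mathcal U_k$ is absorbed into $O(x^2+\dots)$. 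Assembling the pieces and dividing through by $w$ gives the claim.

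\textbf{Main obstacle.} The crux is uniformity: every implied constant must be independent of the spectral scale $x$ (it may depend on $N$ and $\nu$). Concretely this requires (i) verifying that the inverted test function $\phi_x$ meets the hypotheses of Proskurin's formula uniformly in $x$; (ii) pinning down the weight $x^{k\sgn\tilde n}$ correctly — the sign of $\tilde n$ genuinely changes the power of $x$, which is precisely the feature distinguishing the ``same-sign'' analysis of this paper from the $\tilde m\tilde n<0$ case; and (iii) bounding the holomorphic term $\mathcal U_k$ uniformly across all weights $w_F\ge\tfrac52$. I expect (iii) to be the principal technical difficulty, since it is exactly the non-uniformity in the weight of Waibel's bound that forced the new argument in Section~3.
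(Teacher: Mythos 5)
This paper does not actually prove Proposition~\ref{AdsDukeEsstBd}; it is imported verbatim from \cite[Proposition~8.1]{QihangFirstAsympt}, and, per the discussion opening Section~5, that proof generalizes \cite[Theorem~4.1]{ADasymptotic} and \cite[Theorem~4.3]{ahlgrendunn}: one computes the Petersson norm of the Poincar\'e series $\U_n(\cdot,s)$ by unfolding its Fourier expansion \eqref{FourierExpPcl} and applies Bessel's inequality to its spectral expansion (extended to the Eisenstein part for the continuous spectrum). There the main term $x^2$ comes from the zeroth unfolded term together with Stirling's formula for the Gamma-factors in $\langle \U_n,v_j\rangle$ — which is also where the weight $x^{k\sgn\tilde n}e^{-\pi r_j}$ originates — and the term $|\tilde n|^{\beta+\ep}x^{1-2\beta}\log^{\beta}x$ comes from the Kloosterman terms of the unfolded norm via hypothesis \eqref{AbsSumSnn}. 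Your route through the trace formula with a nonnegative test function is therefore a genuinely different (and in principle recognized) strategy, but crucially it forces you to confront terms that the Bessel-inequality proof never sees.

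As written, the plan has concrete gaps. First, Theorem~\ref{traceFormula Theorem} is only available for $k\in\{\frac12,\frac32\}$ and $\tilde m,\tilde n>0$, whereas the proposition covers $k=\pm\frac12$ and both signs of $\tilde n$; the factor $x^{k\sgn\tilde n}$ changes with $\sgn\tilde n$, and the case $\tilde n<0$ needs the opposite-sign trace formula with a different Bessel kernel, which your argument never supplies. Second, the formula as stated here has no diagonal term, so your claimed source of the main term (``the diagonal term of the geometric side\dots $\asymp x^2$ by Plancherel'') does not exist in the identity you invoke; the $x^2$ would have to be extracted from $\sum_{c}S(n,n,c,\nu)c^{-1}\phi(4\pi\tilde n/c)$ itself, and likewise the precise error $|\tilde n|^{\beta+\ep}x^{1-2\beta}\log^{\beta}x$ is asserted rather than derived from your implicitly defined $\phi_x$. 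Third, $\mathcal U_k$ carries the phase $e^{\pi i w_F/2}$ and the unsigned weights $\widetilde{\phi}(w_F)$, so it cannot be discarded by positivity and must be bounded; your proposed bound via Propositions~\ref{Waibel's bound prop modified 143} and \ref{Level lifting: holo cusp forms, final bound} imports condition (1) of Definition~\ref{Admissibility}, which the proposition does not assume — only \eqref{AbsSumSnn} is available. Finally, the required positivity of $\widehat{\phi}$ must also hold at the exceptional points $r\in i(0,\frac14]$ appearing in $\mathcal W$, and the existence of a $\phi$ meeting Proskurin's hypotheses uniformly in $x$ with $\widehat{\phi}\geq 0$ everywhere and $\widehat{\phi}\gg 1$ on $[x,2x]$ is a nontrivial inversion problem left unverified.
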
 
	\begin{remark}
		In Definition~\ref{Admissibility}, an admissible multiplier satisfies \eqref{AbsSumSnn} with $\beta=\frac12+\ep$ for any $\ep$. 
	\end{remark}

	\begin{proposition}[{\cite[Proposition~8.2]{QihangFirstAsympt}}]\label{AhgDunEsstBd}
		Suppose that $\nu$ is a weight $k=\pm\frac12$ admissible multiplier on $\Gamma=\Gamma_0(N)$ with $M$, $D$ and $B$ given in Definition~\ref{Admissibility}. Let $\rho_j(n)$ denote the Fourier coefficients of an orthonormal basis $\{v_j(\cdot)\}$ of $\LEigenform_{k}(N,\nu)$. For each singular cusp $\mathfrak{a}$ of $(\Gamma,\nu)$, let $\Ea(\cdot,s)$ be the associated Eisenstein series. Let $\rho_{\mathfrak{a}}(n,r)$ be defined as in \eqref{FourierExpEsst}. Suppose $x\geq 1$. 
		For $ n\neq 0$ we factor $B\tilde n=t_nu_n^2w_n^2$ where $t_n$ is square-free, $u_n|M^\infty$ is positive and ${(w_n,M)=1}$. Then we have
		\[x^{k \sgn  {\tilde n}}|{\tilde n}|\(\sum_{|r_j|\leq x} \frac{|\rho_j(n)|^2}{\ch \pi r_j} +\sum_{\mathrm{singular\ }\mathfrak{a}}\int_{-x}^{x} \frac{|\rho_{\mathfrak{a}}(n,r)|^2}{\ch \pi r}dr\)  \ll_{N,\nu,\ep} \( |{\tilde n}|^{\frac{131}{294}}+u_n\) x^{3}|\tilde {n}|^\ep. \]
	\end{proposition}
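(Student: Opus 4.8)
\emph{Strategy.} The plan is to derive the estimate from the Proskurin--Kuznetsov trace formula (Theorem~\ref{traceFormula Theorem}) taken at $m=n$, together with a positivity argument of Duke's type, controlling the two non-spectral contributions --- the holomorphic cusp forms and the Kloosterman sums --- by the uniform bound of Proposition~\ref{Level lifting: holo cusp forms, final bound} and by the average Weil bound of Definition~\ref{Admissibility}(2), respectively. This is the route used for the discrete spectrum in \cite{ADasymptotic,ahlgrendunn} and extended to the continuous spectrum in \cite[\S8]{QihangFirstAsympt}.

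\emph{Reduction to the trace formula.} By \eqref{KlstmSumConj}, \eqref{FourierCoeffConj} and \eqref{tildeNConj} I would first assume $\tilde n>0$ (these exchange $\nu\leftrightarrow\overline\nu$, $k\leftrightarrow-k$, flip $\sgn\tilde n$, and preserve $|S(n,n,c,\cdot)|$, $|\rho_j(n)|^2$, the Eisenstein data and $|\tilde n|$); and since a weight $-\tfrac12$ multiplier is also a weight $\tfrac32$ one, an application of the raising operator $R_{-1/2}$ (cf.\ \eqref{RLOperators}), which multiplies Fourier coefficients by an explicit factor $\asymp 1+|r_j|$ away from the bottom eigenvalue, reduces the weight $-\tfrac12$ case to the weight $\tfrac32$ case --- and this is exactly what turns the $x^{3/2}$ of weight $\tfrac32$ into the $x^{7/2}$ appearing for weight $-\tfrac12$, $\tilde n>0$ (the single bottom-spectrum term being estimated directly). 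So it suffices to treat $k\in\{\tfrac12,\tfrac32\}$ with $\tilde n>0$, where $a=4\pi\tilde n$ and the trace formula at $m=n$ reads, for smooth rapidly decaying $\phi$,
\[
\sum_{N\mid c>0}\frac{S(n,n,c,\nu)}{c}\,\phi\!\Big(\tfrac{4\pi\tilde n}{c}\Big)
=\mathcal U_k+4\tilde n\sum_{r_j}\frac{|\rho_j(n)|^2}{\ch\pi r_j}\,\widehat\phi(r_j)
+\frac{\tilde n}{\pi}\sum_{\mathrm{sing.\,}\mathfrak a}\int_{-\infty}^{\infty}\frac{|\rho_{\mathfrak a}(n,r)|^2}{\ch\pi r}\,\widehat\phi(r)\,dr
\]
(using $\overline{\rho_j(m)}\rho_j(n)=|\rho_j(n)|^2$ and the second form of $\mathcal E_{\mathfrak a}$).

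\emph{Positivity and the two non-spectral terms.} By \eqref{widehatPhiinTypeFG}, $\widehat\phi(r)$ equals a fixed unimodular constant $C_k$ (coming from $\xi_k$) times a \emph{real}, non-negatively weighted Bessel transform of $\phi$. Following Duke, I would choose $\phi$ so that $\widehat\phi(r)=C_k\psi_x(r)$ with $\psi_x\ge0$ everywhere, $\psi_x(r)\gg 1$ on $|r|\le x$, and $\psi_x$ rapidly decaying beyond. Dividing by $C_k$, dropping the non-negative spectral terms with $|r_j|>x$ and $|r|>x$, and keeping the remaining positive Eisenstein integral on the left gives
\[
\tilde n\!\!\sum_{|r_j|\le x}\!\!\frac{|\rho_j(n)|^2}{\ch\pi r_j}
+\tilde n\!\!\sum_{\mathrm{sing.\,}\mathfrak a}\int_{-x}^{x}\!\!\frac{|\rho_{\mathfrak a}(n,r)|^2}{\ch\pi r}\,dr
\ \ll_{N,\nu}\ \Big|\sum_{N\mid c>0}\frac{S(n,n,c,\nu)}{c}\,\phi\!\Big(\tfrac{4\pi\tilde n}{c}\Big)\Big|
+\big|\mathcal U_k\big|,
\]
so it remains to bound the geometric side and $\mathcal U_k$, and to multiply by $x^{k\sgn\tilde n}$. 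For the geometric side, $\phi(4\pi\tilde n/c)$ is localized in $c$, and a partial summation against $\phi'$ (with $J$-Bessel bounds in the spirit of Section~4) reduces the sum to the average Weil bound of Definition~\ref{Admissibility}(2) over dyadic ranges; splitting $c\le\tilde n$ and $c\ge\tilde n$ as in the proof of Proposition~\ref{Waibel's bound prop modified 143} produces the stated $\tilde n$- and $x$-dependence. For $\mathcal U_k$, grouping $\mathscr B_k$ by weight $K=w_F=k+2l\ge\tfrac52$ and applying Proposition~\ref{Level lifting: holo cusp forms, final bound} gives $\sum_{F\in\mathscr B_k,\,w_F=K}|a_F(n)|^2\ll_{N,\nu,\ep}(\tilde n^{19/42}+u_n)\tilde n^\ep(4\pi\tilde n)^{K-1}/\Gamma(K-1)$; cancelling the $\Gamma$- and $(4\pi)$-factors against those in $\mathcal U_k$ turns this into
\[
|\mathcal U_k|\ \ll_{N,\nu,\ep}\ \big(\tilde n^{19/42}+u_n\big)\tilde n^\ep\sum_{l\ge1}(k+2l-1)\,\big|\widetilde\phi(k+2l)\big|,
\]
and the rapid decay of $\widetilde\phi(K)=\int_0^\infty J_{K-1}(u)\phi(u)\tfrac{du}{u}$ in $K$ (from $|J_{K-1}(u)|\le(u/2)^{K-1}/\Gamma(K)$ and the localization of $\phi$) makes the $l$-sum converge. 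A final balancing --- improving $\tfrac{19}{42}$ to $\tfrac{131}{294}$ by optimizing the auxiliary prime $P$ and the two-range split inside the argument of Proposition~\ref{Waibel's bound prop modified 143} while retaining the $x$-dependence --- combines the two estimates into $\ll_{N,\nu,\ep}(\tilde n^{131/294}+u_n)x^{3-k\sgn\tilde n}\tilde n^\ep$ for the right-hand side above; multiplying by $x^{k\sgn\tilde n}$ yields the proposition.

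\emph{Main obstacle.} The hard part is $\mathcal U_k$: it runs over \emph{all} weights $K=k+2l$, while the estimates of Iwaniec \cite{Iwaniec1987HalfWeight} and Waibel \cite{Waibel2017FourierCO} behind Proposition~\ref{Waibel's bound prop modified 143} carry constants depending on $K$ through the Bessel expansions; before the $l$-sum can be absorbed one must rework their argument with all implied constants uniform in $K\ge\tfrac52$ --- precisely the content of Proposition~\ref{Waibel's bound prop modified 143} (which is why the uniform $J$-Bessel bound \eqref{Landau JBessel explicit} is invoked there). A secondary technical issue is constructing a test function whose spectral transform is a fixed phase times a non-negative function bounded below on $|r|\le x$, which relies on the explicit form of $\xi_k$ in \eqref{xi k r def}--\eqref{xi k r bound} and on the $F_{2ir},G_{2ir}$ estimates of Section~4.
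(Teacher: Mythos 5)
You should first note that this paper does not actually prove Proposition~\ref{AhgDunEsstBd}: it is imported verbatim from \cite[Proposition~8.2]{QihangFirstAsympt}, which in turn generalizes \cite[Theorem~4.3]{ahlgrendunn} to include the continuous spectrum. Your overall strategy --- the Proskurin--Kuznetsov formula at $m=n$, a test function whose spectral transform is a fixed phase times a non-negative function bounded below on the spectral window, and separate control of the holomorphic and Kloosterman contributions --- is the correct family of argument and matches the cited proof at that level. However, there is a genuine gap in your treatment of the geometric side. You propose to bound $\sum_c S(n,n,c,\nu)\phi(4\pi\tilde n/c)/c$ by partial summation against the average Weil bound of Definition~\ref{Admissibility}(2). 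Since $\widehat\phi$ must be bounded below at $r=0$, the support of $\phi(4\pi\tilde n/c)$ necessarily reaches $c\asymp\tilde n$, and the average Weil bound then gives only $\tilde n^{\frac12+\ep}$; no partial summation improves the $\tilde n$-exponent below $\tfrac12$. But the proposition, applied with $x=1$ to a single form with $|r_j|\le 1$, asserts $\tilde n|\rho_j(n)|^2\ll(\tilde n^{131/294}+u_n)\tilde n^\ep$ with $\tfrac{131}{294}<\tfrac12$ and $u_n$ possibly $O(1)$ --- a Duke-type subconvex bound for individual coefficients that provably cannot follow from the average Weil bound alone. The necessary mechanism is the Iwaniec--Duke--Waibel amplification visible in the proof of Proposition~\ref{Waibel's bound prop modified 143}: run the trace formula at the auxiliary levels $pM$, $p\in\mathcal P$, average over $p$, and estimate $\sum_p|K^{(\mu)}_{pM}|$ via \eqref{KQmu general case}. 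For the Maass case this must be pushed through the entire Kuznetsov formula --- embedding the level-$N$ discrete spectrum into level $pM$ and controlling the Eisenstein contribution at those auxiliary levels, which is exactly the ``continuous spectrum'' content of \cite[\S 8]{QihangFirstAsympt} that the paper emphasizes. You apply this mechanism only to $\mathcal U_k$ (where it is packaged inside Propositions~\ref{Waibel's bound prop modified 143}--\ref{Level lifting: holo cusp forms, final bound}) and not to the Kloosterman term, so your argument as written yields at best the exponent $\tfrac12$, i.e.\ essentially Proposition~\ref{AdsDukeEsstBd} with $\beta=\tfrac12+\ep$, not the stated result.

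Two secondary points. First, the test function with $\widehat\phi/C_k\ge 0$ everywhere and $\gg 1$ on $[-x,x]\cup i(0,\tfrac14]$ is asserted rather than constructed; positivity and a lower bound on the exceptional segment require separate care, since there $\widehat\phi(it)$ behaves like $(x/a)^{\pm 2t}$ rather than like its values on the real line. Second, even granting everything else, Proposition~\ref{Level lifting: holo cusp forms, final bound} produces the exponent $\tfrac{19}{42}=\tfrac{133}{294}>\tfrac{131}{294}$, and the ``final balancing'' you invoke to reach $\tfrac{131}{294}$ while retaining the $x^{3}$ dependence is not justified from the ingredients available in this paper. On the positive side, your reduction of $k=-\tfrac12$ to $k=\tfrac32$ via the ladder operators, with the factor $\asymp 1+|r_j|$ converting $x^{3/2}$ into $x^{7/2}$ and the bottom eigenvalue handled by the vanishing of its coefficients for $\tilde n>0$, is consistent with what the paper itself does in Section~5.2.
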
 
	
	\begin{remark}
		We make some remarks about the weight $k$:  
		\begin{itemize}
			\item The trace formula (Theorem~\ref{traceFormula Theorem}) works for $k=\frac12$ and $\frac 32$.
			\item The estimates on $\widehat{\phi}$ and $\widetilde{\phi}$ in the previous section work for $k=\frac12$ and $\frac 32$.
			\item The above two propositions work for $k=\frac12$ and ${-\tfrac 12}$. 
		\end{itemize}
	Therefore, in this section, we separate the proof of Theorem~\ref{mainThm} into two cases $k=\frac12$ and ${-\tfrac 12}$. In the second case we will apply the Maass lowering operator $L_{\frac 32}$ \eqref{Maass Lowering op} to connect the eigenforms of weight $\frac 32$ and weight $-\frac12$. 
	\end{remark}

We declare that all implicit constants for the bounds in this section depend on $N$, $\nu$ and $\ep$, and we drop the subscripts unless specified.

	Since the exceptional spectral parameter $r_j\in i(0,\frac14]$ of Laplacian $\Delta_k$ on $\Gamma=\Gamma_0(N)$ is discrete, $t_j=\im r_j$ has a positive lower bound denoted as $\underline t>0$ depending on $N$ and $\nu$. We also have $2 \im r_\Delta\leq \theta$ assuming $H_\theta$ \eqref{HTheta} by Theorem~\ref{specParaBoundWeightHalf}. For simplicity let 
	\[A(m,n)\defeq(\tilde{m}^{\frac{131}{294}}+u_m)^{\frac12}(\tilde{n}^{\frac{131}{294}}+u_n)^{\frac12}\ll( \tilde{m}  \tilde{n})^{\frac{131}{588}}
	+  \tilde{m}^{\frac{131}{588}} u_n^{\frac12}
	+ \tilde{n}^{\frac{131}{588}} u_m^{\frac12}+
	(u_mu_n)^{\frac12}, \]
	then
	\begin{align}\label{AuDef}
		\begin{split}
			A_u(m,n)&\defeq A(m,n)^{\frac14}  (\tilde{m}  \tilde{n})^{\frac3{16}}\\
			&\ll ( \tilde{m}  \tilde{n})^{\frac{143}{588}}+\tilde{m}^{\frac{143}{588}}\tilde{n}^{\frac 3{16}}\,u_n^{\frac18}+\tilde{m}^{\frac 3{16}}u_m^{\frac18}\tilde{n}^{\frac{143}{588}}+ (\tilde{m}  \tilde{n})^{\frac3{16}}(u_mu_n)^{\frac18}.
		\end{split}
	\end{align}
Recall the notations in Setting~\ref{conditionATDelta} and Setting~\ref{conditionphi}. The following inequalities will be used later in the proof:
\begin{equation}
	\label{A leq Au leq mn 1/4} 
	A(m,n)\ll A_u(m,n)\ll (\tilde m\tilde n)^{\frac14};
\end{equation}
\begin{equation}
	\label{Pow ax A leq Au when x large}
	 \(\frac ax\)^{\beta}A(m,n)\ll A_u(m,n) \quad \text{for }0\leq \beta\leq \frac 32,\quad \text{when }x\gg A_u(m,n)^2. 
\end{equation}
	
	\subsection{On the case $k=\frac12$}
	Let $\rho_j(n)$ denote the coefficients of an orthonormal basis $\{v_j(\cdot)\}$ of $\LEigenform_{\frac12}(N,\nu)$. For each singular cusp $\mathfrak{a}$ of $\Gamma=\Gamma_0(N)$, let $\rho_{\mathfrak{a}}(n,r)$ be defined as in \eqref{FourierExpEsst}. Recall the definition of $\tau_j(m,n)$ in Theorem~\ref{mainThm} and the notations in Settings \ref{conditionATDelta} and \ref{conditionphi}. We claim the following proposition: 
	\begin{proposition}\label{PrereqPropGEN}
		With the same setting as Theorem~\ref{mainThm} for $k=\frac12$, when $2x\geq A_u(m,n)^2$, we have 
		\begin{align}\label{PrereqPropGENequation}
			\begin{split}
				\sum_{\substack{x< c\leq 2x\\N|c}} \frac{S(m,n,c,\nu)}{c} -\sum_{r_j\in i (0,\frac14]}(2^{2s_j-1}-1)\tau_j(m,n)\frac{x^{2s_j-1}}{2s_j-1}\ll \(x^{\frac 16}+A_u(m,n)\)(\tilde{m}\tilde{n}x)^\ep.
			\end{split}
		\end{align}
	\end{proposition}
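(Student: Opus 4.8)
The plan is to apply the Kuznetsov-type trace formula (Theorem~\ref{traceFormula Theorem}) with the specific test function $\phi$ chosen in Setting~\ref{conditionPhiNew5.4}, whose parameters $a=4\pi\sqrt{\tilde m\tilde n}$, $T\asymp x^{1-\delta}$, $T'\asymp x^{1-2\delta}$ are tied to the dyadic window $(x,2x]$. Because $\phi$ equals $1$ on $[\tfrac a{2x},\tfrac ax]$ and is supported in $[\tfrac a{2x+2T},\tfrac a{x-T}]$, the left-hand side $\sum_{N|c}\frac{S(m,n,c,\nu)}{c}\phi\bigl(\tfrac a c\bigr)$ captures exactly the dyadic sum $\sum_{x<c\le 2x,\,N|c}\frac{S(m,n,c,\nu)}{c}$ up to the two transition ranges $c\in[\tfrac{x-T}{1},?]$, more precisely $c\in(\tfrac{a}{3a/(2x)},\ldots)$; these boundary pieces are controlled by the average Weil bound (condition (2) of Definition~\ref{Admissibility}), which bounds $\sum_{N|c\in[x,x+T]}\frac{|S(m,n,c,\nu)|}{c}\ll (\sqrt{x+T}-\sqrt x)(\tilde m\tilde n x)^\ep\ll \tfrac{T}{\sqrt x}(\tilde m\tilde n x)^\ep$, and $T/\sqrt x\asymp x^{1/2-\delta}\ll x^{1/6}$ once $\delta=1/3$. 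So the first step is to reduce the claimed estimate to bounding $\mathcal U_k+\mathcal W+\sum_{\mathfrak a}\mathcal E_{\mathfrak a}$ against the main term $\sum_{r_j}(2^{2s_j-1}-1)\tau_j(m,n)\tfrac{x^{2s_j-1}}{2s_j-1}$ plus an acceptable error.

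Second, I would extract the main term. Split the spectral sum $\mathcal W=4\sqrt{\tilde m\tilde n}\sum_{r_j}\frac{\overline{\rho_j(m)}\rho_j(n)}{\ch\pi r_j}\widehat\phi(r_j)$ into the exceptional part $r_j\in i(0,\tfrac14]$ and the tempered part $r_j\in[0,\infty)$ (with $|r_j|$ dyadically decomposed). For the finitely many exceptional $r_j=it_j$ with $t_j\in[\underline t,\tfrac\theta2]$ (plus possibly $r_j=\tfrac i4$), Lemma~\ref{mainphiLemma} gives $\widehat\phi(it_j)=\frac{e^{k\pi i/2}\cos(\pi t_j)\Gamma(\tfrac12+\tfrac k2+t_j)\Gamma(2t_j)}{\Gamma(\tfrac12-\tfrac k2+t_j)2^{2t_j}\pi^{2t_j}(\tilde m\tilde n)^{t_j}}\cdot\frac{(2^{2t_j}-1)x^{2t_j}}{2t_j}+O\bigl(x^{2t_j-\delta}a^{-2t_j}+a^{2t_j}x^{-2t_j}+1\bigr)$; multiplying by $4\sqrt{\tilde m\tilde n}\,\overline{\rho_j(m)}\rho_j(n)/\ch\pi r_j$ and simplifying the $\Gamma$- and trigonometric factors (as sketched in the commented-out computation) reproduces exactly $(2^{2s_j-1}-1)\tau_j(m,n)\tfrac{x^{2s_j-1}}{2s_j-1}$ with $s_j=\tfrac12+t_j$; here one uses Proposition~\ref{AhgDunEsstBd} merely to bound the individual $|\rho_j(m)\rho_j(n)|$ and absorb the error terms. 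The error contributions from these finitely many terms are $\ll \bigl(A_u(m,n)+x^{1/6}\bigr)(\tilde m\tilde n x)^\ep$ after using \eqref{Pow ax A leq Au when x large} (valid since $2x\ge A_u(m,n)^2$) and $\delta=1/3$.

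Third, I would bound the remaining pieces — the tempered spectrum, the Eisenstein contribution $\mathcal E_{\mathfrak a}$, and the holomorphic term $\mathcal U_k$ — all as error terms. For the tempered spectrum and Eisenstein integral, dyadically decompose in $|r|$: on $|r|\le 1$ use Lemma~\ref{RisReal 0 to 1} ($\widehat\phi(r)\ll(ax)^\ep$) together with the $x=1$ case of Proposition~\ref{AhgDunEsstBd} to bound $\sum_{|r_j|\le 1}|\rho_j(m)\rho_j(n)|/\ch\pi r_j$ by Cauchy–Schwarz; on dyadic ranges $|r|\sim R\ge 1$ use Lemma~\ref{phiBounds} ($\widehat\phi(r)\ll R^{k-3/2}$, improved to $R^{k-5/2}x/T$ for $R\gg a/x$) against Proposition~\ref{AhgDunEsstBd} applied at scale $R$, so that $4\sqrt{\tilde m\tilde n}\sum_{|r_j|\sim R}\frac{|\rho_j(m)\rho_j(n)|}{\ch\pi r_j}\widehat\phi(r_j)\ll (A(m,n)+(\tilde m\tilde n)^{?})R^{?}\cdots$, and sum the geometric-type series over $R$; the exponents are arranged so the total is $\ll(A_u(m,n)+x^{1/6})(\tilde m\tilde n x)^\ep$, with the $x^{1/6}$ emerging from the crossover $R\asymp a/x$ balanced against $x/T=x^\delta=x^{1/3}$. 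The Eisenstein term is handled identically via its $\rho_{\mathfrak a}(n,r)$ expansion. For $\mathcal U_k$, note every $F\in\mathscr B_k$ has weight $w_F=k+2l\ge 5/2$, $\widetilde\phi(w_F)\ll 1$ uniformly by the trivial Bessel bound (as in Lemma~\ref{RisReal 0 to 1}, since $\phi$ is supported away from $0$ on a bounded-logarithmic-length interval), and Proposition~\ref{Level lifting: holo cusp forms, final bound} gives $\frac{\Gamma(w_F-1)}{(4\pi\tilde n)^{w_F-1}}\sum_j|a_{F,j,l}(n)|^2\ll(\tilde n^{19/42}+u_n)\tilde n^\ep$; Cauchy–Schwarz in $F$ and summing the rapidly decaying factors $\Gamma(w_F)/(4\pi)^{w_F}(\tilde m\tilde n)^{(w_F-1)/2}$ over $l$ yields $\mathcal U_k\ll (\tilde m^{19/84}+u_m^{1/2})(\tilde n^{19/84}+u_n^{1/2})(\tilde m\tilde n)^\ep\ll A_u(m,n)(\tilde m\tilde n)^\ep$ by \eqref{A leq Au leq mn 1/4} and the numerics (note $19/84<143/588$).

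The main obstacle is the careful bookkeeping of exponents in the dyadic sum over the tempered and Eisenstein spectra: one must verify that the combination of $\widehat\phi(r)\ll r^{k-3/2}$ for $1\le r\lesssim a/x$ and $\widehat\phi(r)\ll r^{k-5/2}x/T$ for $r\gtrsim \max(a/x,1)$, weighted by the spectral large-sieve bound of Proposition~\ref{AhgDunEsstBd} (which grows like $x^3$ in the cutoff and like $\tilde n^{131/294}+u_n$ in $n$, with the crucial $1/\ch\pi r$ weight), produces after summation precisely the error $(x^{1/6}+A_u(m,n))(\tilde m\tilde n x)^\ep$ and nothing worse — in particular that the $(\tilde m\tilde n)^{3/16}$ factor distinguishing $A_u$ from $A$ arises correctly from balancing the $a/x$ crossover (where $a\asymp(\tilde m\tilde n)^{1/2}$) against the available savings, and that the constraint $2x\ge A_u(m,n)^2$ is exactly what makes \eqref{Pow ax A leq Au when x large} applicable throughout. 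Once the exponent arithmetic is pinned down with $\delta=1/3$, the rest is assembly.
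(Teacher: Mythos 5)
Your overall skeleton agrees with the paper's proof: smooth the sharp cutoff with the test function of Setting~\ref{conditionPhiNew5.4} and control the transition ranges by condition (2) of Definition~\ref{Admissibility} (giving $x^{1/2-\delta}=x^{1/6}$), extract the main term from the exceptional spectrum via Lemma~\ref{mainphiLemma} together with \eqref{Pow ax A leq Au when x large}, and treat $\mathcal U_k$, the tempered spectrum and the Eisenstein contribution as errors. However, there is a genuine gap in your treatment of the tempered spectrum $r_j\geq 1$. You propose to run the dyadic decomposition using \emph{only} Proposition~\ref{AhgDunEsstBd} ``applied at scale $R$'' against $\widehat{\phi}(r)\ll \min(r^{-1},r^{-2}x/T)$. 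For $k=\tfrac12$, Proposition~\ref{AhgDunEsstBd} plus Cauchy--Schwarz gives the cumulative bound $\sqrt{\tilde m\tilde n}\sum_{r_j\leq R}|\rho_j(m)\rho_j(n)|/\ch\pi r_j\ll A(m,n)R^{5/2}(\tilde m\tilde n)^{\ep}$, so a dyadic block at scale $R$ contributes $\ll A(m,n)\min(R^{3/2},R^{1/2}x/T)$, which \emph{increases} without bound in $R$: the sum over dyadic blocks diverges, and no arrangement of exponents rescues it. The paper closes this by splitting at the threshold $P(m,n)=2(\tilde m\tilde n)^{1/8}A(m,n)^{-1/2}$ and switching, for $r_j\geq P(m,n)$, to Proposition~\ref{AdsDukeEsstBd} with $\beta=\tfrac12+\ep$ (available precisely because admissibility supplies \eqref{AbsSumSnn}). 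That proposition has Weyl-law-consistent growth $C^2+(\tilde m\tilde n)^{1/4}+(\tilde m^{1/4}+\tilde n^{1/4})C$ on each window $[C,2C]$, so each block contributes $\min(C^{1/2},C^{-1/2}x/T)+(\tilde m^{1/4}+\tilde n^{1/4})C^{-1/2}+(\tilde m\tilde n)^{1/4}C^{-3/2}$ and the dyadic sum converges to $(x/T)^{1/2}+(\tilde m\tilde n)^{3/16}A(m,n)^{1/4}=x^{\delta/2}+A_u(m,n)$. This balancing at $P(m,n)$, not the crossover at $a/x$, is where both the $x^{1/6}$ term and the exponent $\tfrac{3}{16}$ in $A_u$ actually originate; your sketch attributes them to the wrong mechanism and omits the one proposition that makes the sum finite.

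A secondary issue concerns $\mathcal U_k$: a uniform bound $\widetilde{\phi}(w_F)\ll 1$ is not sufficient, because after normalizing $\sum_j|a_{F,j,l}(n)|^2$ by $\Gamma(w_F-1)/(4\pi\tilde n)^{w_F-1}$ (which is what Proposition~\ref{Level lifting: holo cusp forms, final bound} controls uniformly in $l$), the coefficient in $\mathcal U_k$ carries an extra factor $w_F-1$; the factors $\Gamma(w_F)/((4\pi)^{w_F}(\tilde m\tilde n)^{(w_F-1)/2})$ are not ``rapidly decaying'' once absorbed into this normalization. One needs genuine decay of $\widetilde{\phi}(k+2l)$ in $l$, namely $\sum_{l\geq 1}(k+2l-1)|\widetilde{\phi}(k+2l)|\ll 1+\tfrac ax$ as in \cite[Lemma~5.1]{dunn}, followed by the observation that $(1+\tfrac ax)(\tilde m^{19/42}+u_m)^{1/2}(\tilde n^{19/42}+u_n)^{1/2}\ll A_u(m,n)$ using the hypothesis $2x\geq A_u(m,n)^2$. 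With these two repairs the argument closes as in the paper.
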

	
	We first show that Proposition~\ref{PrereqPropGEN} implies Theorem~\ref{mainThm} in the case $k=\frac12$, which follows from a similar process as \cite[after Proposition~9.1]{QihangFirstAsympt}. Recall that $2\im r_j=2s_j-1$ for $r_j\in i(0,\frac14]$ and that the corresponding exceptional eigenvalue $\lambda_j=\frac14+r_j^2=s_j(1-s_j)$. The sum to be estimated is 
	\begin{equation}\label{ToEstmtMainThm}
		\sum_{N|c\leq X}\frac{S(m,n,c,\nu)}c-\sum_{r_j\in i(0,\frac14]}\tau_j(m,n)\frac{X^{2\im r_j}}{2\im r_j} ,
	\end{equation}
	where 
	\[\tau_j(m,n)=2i^{\frac12}\overline{\rho_j(m)}\rho_j(n)\pi^{1-2s_j}(4\tilde m \tilde n)^{1-s_j}\frac{\Gamma(s_j+\frac 14)\Gamma(2s_j-1)}{\Gamma(s_j-\frac 14)}.\]
	Since $t_j=\im r_j\in[\underline{t},\frac14]$ and $s_j=\im r_j+\frac12\in [\underline t+\frac12,\frac 34]$, 
	the quantity 
	\[\pi^{1-2s_j}4^{1-s_j}\frac{\Gamma(s_j+\frac14)\Gamma(2s_j-1)}{\Gamma(s_j-\frac14)}\]
	is bounded from above and below. By Proposition~\ref{AhgDunEsstBd}, 
	\begin{equation}\label{Bound for tau_j}
		\frac{\tau_j(m,n)}{2s_j-1}\ll |\rho_j(m)\rho_j(n)|(\tilde m\tilde n)^{1-s_j}\ll A(m,n)(\tilde m\tilde n)^{\frac12-s_j+\ep}. 
	\end{equation}
	When $X\ll A_u(m,n)^2$, since $A(m,n)\ll (\tilde{m}\tilde{n})^{\frac14}$ by \eqref{A leq Au leq mn 1/4}, 
	\begin{align}\label{taujestGEN}
		\begin{split}
			\tau_j(m,n)\frac{X^{2s_j-1}}{2s_j-1}&\ll  A(m,n)|\tilde{m}\tilde{n}|^{\frac12-s_j+\ep}A_u(m,n)^{4s_j-2}\\
			&= A(m,n)^{s_j+\frac12}|\tilde{m}\tilde{n}|^{\frac18-\frac14s_j+\ep}\ll A_u(m,n)(\tilde m\tilde n)^\ep  . 
		\end{split}
	\end{align}
	So in this case we get Theorem~\ref{mainThm} where the $\tau_j$ terms are absorbed in the errors. 
	
	When $X\geq A_u(m,n)^2$, the segment for summing Kloosterman sums on $1\leq c\leq A_u(m,n)^2$ contributes a $O_{\nu,\ep}(A_u(m,n)|\tilde{m}\tilde{n}|^{\ep})$ by condition (2) of Definition \ref{Admissibility}. The segment for $A_u(m,n)^2\leq c\leq X$ can be broken into no more than $O(\log X)$ dyadic intervals $x<c\leq 2x$ with $A_u(m,n)^2\leq x\leq \frac X2$ and we use Proposition~\ref{PrereqPropGEN} for both the Kloosterman sum and the $\tau_j$ terms. In summing dyadic intervals, for each $r_j\in i(0,\frac14]$, we get
	\begin{align*}
		\sum_{\ell=1}^{\ceil{\log_2 \(X/A_u(m,n)^2\)}}&\frac{(2^{2s_j-1}-1)\tau_j(m,n)}{2s_j-1}
		\(\frac{X}{2^\ell}\)^{2s_j-1}\\
		&\qquad =\frac{\tau_j(m,n)}{2s_j-1}X^{2s_j-1}
		\( 1 - 2^{(1-2s_j)\ceil{\log_2 \(X/A_u(m,n)^2\)}}\).
	\end{align*}
	The difference between the above quantity and the quantity $\tau_j(m,n)\dfrac{X^{2s_j-1}}{2s_j-1}$ in \eqref{ToEstmtMainThm} is
	\begin{equation}\label{differenceBetweenEstmtedTauAndTrueTauGEN}
		{\tau_j(m,n)}\frac{X^{2s_j-1}}{2s_j-1}\cdot 2^{(1-2s_j)\ceil{\log_2 \(X/A_u(m,n)^2\)}}\ll \frac{\tau_j(m,n)}{2s_j-1}A_u(m,n)^{4s_j-2}\ll A_u(m,n)
	\end{equation}
	by \eqref{Bound for tau_j}. In conclusion, for $X\geq A_u(m,n)^2$ we get
	\begin{align*}
		&\sum_{\substack{N|c\leq X}}\frac{S(m,n,c,\nu)}{c} -\sum_{r_j\in i (0,\frac14]}\tau_j(m,n)\frac{X^{2s_j-1}}{2s_j-1}\\
		&=\sum_{\substack{A_u(m,n)^2<c\leq X}} \frac{S(m,n,c,\nu)}{c}-\sum_{r_j\in i (0,\frac14]}\tau_j(m,n)\frac{X^{2s_j-1}}{2s_j-1}+O(A_u(m,n)|\tilde{m}\tilde{n}|^{\ep})\\
		&=\sum_{\ell=1}^{\ceil{\log_2 \(X/A_u(m,n)^2\)}}\(\sum_{\substack{\frac{X}{2^\ell}<c\leq \frac{X}{2^{\ell-1}}}} \frac{S(m,n,c,\nu)}{c} -\sum_{r_j\in i (0,\frac14]}\frac{(2^{2s_j-1}-1)\tau_j(m,n)}{2s_j-1}\(\frac{X}{2^\ell}\)^{2s_j-1}\)\\
		&\quad + O(A_u(m,n)|\tilde{m}\tilde{n}|^{\ep})\\
		&\ll \(X^{\frac 16}+A_u(m,n)\)|\tilde{m}\tilde{n}X|^\ep
	\end{align*}
	where the second equality follows from \eqref{differenceBetweenEstmtedTauAndTrueTauGEN} and the last inequality is by Proposition~\ref{PrereqPropGEN}. Theorem~\ref{mainThm} follows in the case $k=\frac12$.

	The proof of Proposition~\ref{PrereqPropGEN} takes the rest of this subsection. 
	For $r_j\in i(0,\frac14]$, by Proposition~\ref{AhgDunEsstBd} we have
	\[\sqrt{\tilde{m}\tilde{n}}\; \overline{\rho_j(m)}\rho_j(n)\ll A(m,n)(\tilde{m}\tilde{n})^{\ep}. \]
	Recall that $a=4\pi\sqrt{\tilde m\tilde n}$ and $\delta=\frac13$ in Setting~\ref{conditionATDelta}. Thanks to $H_{\frac 7{64}}$ \eqref{HTheta} and Proposition~\ref{specParaBoundWeightHalf}, when $r_j=it_j\in i(0,\frac \theta2]$ we have $2t_j<\delta=\frac 13$. Since $2x\geq A_u(m,n)^2$ by hypothesis, it follows from \eqref{Pow ax A leq Au when x large} that 
	\begin{align*}
	\sqrt{\tilde{m}\tilde{n}}\; \overline{\rho_j(m)}\rho_j(n)\(\frac{x^{2t_j-\delta}}{a^{2t_j}}+\frac {a^{2t_j}}{x^{2t_j}}+1\)\ll A_u(m,n)(\tilde m\tilde n)^\ep.
	\end{align*}
	Applying Lemma~\ref{mainphiLemma} where $t_j\in [\underline t,\frac\theta 2]$ and recalling the definition of $\tau_j$ in Theorem~\ref{mainThm}, we get
	\begin{align}\label{taujReference5.4GEN}
		\begin{split}
			4\sqrt{\tilde{m}\tilde{n}}&\;\frac{\overline{\rho_j(m)}\rho_j(n)}{\ch \pi r_j}\widehat{\phi}(r_j)\\
			&=(2^{2s_j-1}-1)\tau_j(m,n)\frac{x^{2s_j-1}}{2s_j-1}+O\(A_u(m,n)(\tilde{m}\tilde{n})^\ep\). 
	\end{split}\end{align}
	When $r_j=\frac i4$ and $k=\frac12$, Lemma~\ref{mainphiLemma} and \eqref{Pow ax A leq Au when x large} give
		\begin{align}\label{taujReference5.4GEN r=i/4}
		\begin{split}
			4\sqrt{\tilde{m}\tilde{n}}\;\frac{\overline{\rho_j(m)}\rho_j(n)}{\cos\frac \pi 4}\widehat{\phi}(\tfrac i4)=2(\sqrt 2-1)\tau_j(m,n)x^{\frac12}+O\(x^{\frac12-\delta}(\tilde m\tilde n)^{\ep}\) .
	\end{split}\end{align}

	With the help of \eqref{taujReference5.4GEN} and \eqref{taujReference5.4GEN r=i/4} we break up the left hand side of \eqref{PrereqPropGENequation} to obtain the following analogue to \cite[(9.8)]{QihangFirstAsympt}: 
	\begin{align}\label{mainDiffrenceGEN}
		\begin{split}
			&\left|\sum_{\substack{x<c\leq 2x\\N|c}} \frac{S(m,n,c,\nu)}{c} -\sum_{r_j\in i (0,\frac14]}(2^{2s_j-1}-1)\tau_j(m,n)\frac{x^{2s_j-1}}{2s_j-1} \right|\\
			\leq& \left|\sum_{\substack{x<c\leq 2x\\N|c}} \frac{S(m,n,c,\nu)}{c} -\sum_{N|c>0} \frac{S(m,n,c,\nu)}{c}\phi\(\frac ac\)\right| +O\(\(x^{\frac12-\delta}+A_u(m,n)\)(\tilde{m}\tilde{n})^{\ep}\)\\
			&+ \left|\sum_{N|c>0} \frac{S(m,n,c,\nu)}{c}\phi\(\frac ac\)-4\sqrt{\tilde{m}\tilde{n}}\sum_{r_j\in i (0,\frac14]}\frac{\overline{\rho_j(m)}\rho_j(n)}{\ch \pi r_j}\widehat{\phi}(r_j)\right|\\
			=:&\;S_1+O\(\(x^{\frac12-\delta}+A_u(m,n)\)(\tilde{m}\tilde{n})^{\ep}\)+S_2. 
		\end{split}
	\end{align}
	The first sum $S_1$ above can be estimated by condition (2) of Definition \ref{Admissibility} as
	\begin{align}\label{TraceSmoothingGEN}
		\begin{split}
			S_1\leq\sum_{\substack{x-T\leq c\leq x\\2x\leq c\leq 2x+2T\\N|c}}\frac{|S(m,n,c,\nu)|}{c}\ll_{N,\nu,\delta,\ep} x^{\frac12-\delta}(\tilde{m}  \tilde{n}x)^\ep.
		\end{split}
	\end{align}
	We then prove a bound for $S_2$. By Theorem~\ref{traceFormula Theorem}, we have
	\begin{equation}\label{S2 weight 12}
		S_2\ll |\mathcal U_{\frac12}|+\la \sqrt{\tilde m\tilde n}\sum_{r_j\geq 0}\frac{\overline{\rho_j(m)}\rho_j(n)}{\ch \pi r_j} \widehat{\phi}(r_j)+\sqrt{\tilde m\tilde n}\sum_{\mathrm{singular\ }\mathfrak{a}}\int_{-\infty}^\infty \overline{\rho_\mathfrak{a}(m,r)}\rho_\mathfrak{a}(n,r)\frac{\widehat{\phi}(r)}{\ch \pi r }dr\ra. 
	\end{equation}

	\subsubsection{Contribution from holomorphic forms}
	For $k=\frac 12$ or $\frac 32$, recall the notation $\mathscr{B}_k$ before Theorem~\ref{traceFormula Theorem}. For $l\geq 1$, let $\{F_{j,l}(\cdot)\}_j$ be an orthonormal basis of $S_{k+2l}(N,\nu)$ with Fourier coefficient $a_{F,j,l}$. 
	By Proposition~\ref{Level lifting: holo cusp forms, final bound}, uniformly for every $l\geq 1$ with $d_l\defeq\dim S_{k+2l}(N,\nu)$, we have $k+2l\geq \frac 52$ and
	\begin{align*}
		&\frac{\Gamma(k+2l-1)}{(4\pi)^{k+2l-1}(\tilde m\tilde n)^{\frac{k+2l-1}2}}\sum_{j=1}^{d_{l}} \overline{a_{F,j,l}(m)}a_{F,j,l}(n)\\
		&\leq \(\frac{\Gamma(k+2l-1)}{(4\pi \tilde{n})^{k+2l-1}}\sum_{j=1}^{d_{l}}|a_{F,j,l}(m)|^2\)^{\frac12}\(\frac{\Gamma(k+2l-1)}{(4\pi \tilde{m})^{k+2l-1}}\sum_{j=1}^{d_{l}}|a_{F,j,l}(n)|^2\)^{\frac12}\\
		&\ll (\tilde m^{\frac{19}{42}}+u_m)^{\frac12}(\tilde n^{\frac{19}{42}}+u_n)^{\frac12}(\tilde m\tilde n)^\ep. 
	\end{align*}
We also have
\begin{equation*}
\sum_{l=1}^\infty (k+2l -1)\,|\widetilde{\phi}(k+2l)|\ll 1+\frac ax 
\end{equation*}
by \cite[Lemma~5.1 and proof of~Lemma 7.1]{dunn} and Lemma~\ref{RisImBefore}. Note that \cite[Lemma~5.1]{dunn} is only for $k=\frac12$, while the same process works for $k=\frac 32$. 
Then the contribution from $\mathcal U_k$ is
	\begin{align*}
		\mathcal{U}_k&= \sum_{l=1}^\infty \frac{k+2l-1}{4\pi}\,\widetilde{\phi}\,(k+2l)\frac{\Gamma(k+2l -1)}{(4\pi)^{k+2l-1}(\tilde m\tilde n)^{\frac{k+2l-1}2}}\sum_{j=1}^{d_l} \overline{a_{F,j,l}(m)}a_{F,j,l}(n)\\
		&\ll\(1+\frac ax\) (\tilde m^{\frac{19}{42}}+u_m)^{\frac12}(\tilde n^{\frac{19}{42}}+u_n)^{\frac12}(\tilde m\tilde n)^\ep. 
	\end{align*}
Recall $a=4\pi\sqrt{\tilde m\tilde n}$ and \eqref{AuDef} for the definition of $A_u(m,n)$. 
We can directly calculate
\[(\tilde m^{\frac{19}{42}}+u_m)^{\frac12}(\tilde n^{\frac{19}{42}}+u_n)^{\frac12}\ll A_u(m,n).  \]
By the hypothesis $2x\geq A_u(m,n)^2$, we also get
\[(\tilde m^{\frac{19}{42}}+u_m)^{\frac12}(\tilde n^{\frac{19}{42}}+u_n)^{\frac12}\cdot\frac ax\ll(\tilde m^{\frac{19}{42}}+u_m)^{\frac12}(\tilde n^{\frac{19}{42}}+u_n)^{\frac12}\cdot\frac a{A_u(m,n)^2}\ll A_u(m,n). \] 
Finally we conclude
	\begin{equation}\label{Trace estmt Holomorphic}
		\mathcal{U}_k\ll A_u(m,n)(\tilde m\tilde n)^\ep\qquad \text{for }k=\tfrac12\text{ or }\tfrac32.  
	\end{equation}


	\subsubsection{Contribution from Maass cusp forms and Eisenstein series. }\label{5.1.2}
	We combine the two propositions at the beginning of this section and bounds on $\widehat{\phi}$ in Section 4 to estimate the contribution from the remaining part of $S_2$ \eqref{S2 weight 12} other than $\mathcal U_k$. The process is the same as \cite[\S 9.1]{QihangFirstAsympt} for $|r|\leq 1$ as $\widehat{\phi}$ shares the same bound as $\check{\phi}$ there. We record the bounds in the following equations. 
	
	Fix $k=\frac12$. In the following estimations we focus on the discrete spectrum $r_j\geq 0$ because each bound for $r_j\in [a,b]$ for any interval $[a,b]\subset \R$ is the same as the bound for $r\in [a,b]\cup [-b,-a]$ in the continuous spectrum. This is a direct result from Proposition~\ref{AdsDukeEsstBd} and Proposition~\ref{AhgDunEsstBd}. Recall that $2x\geq A_u(m,n)^2$ in the assumption of Proposition~\ref{PrereqPropGEN}.

	For $r\in [0,1)$, we apply Lemma~\ref{RisReal 0 to 1}, Proposition~\ref{AhgDunEsstBd} and Cauchy-Schwarz to get
	\begin{equation}\label{GeneralTraceRisSmall}
		\sqrt{ \tilde{m}  \tilde{n}}\sum_{r\in [0,1)} \left|\frac{\overline{\rho_j(m)}\rho_j(n)}{\ch \pi r_j} \widehat{\phi}(r_j)\right| 
		\ll  A(m,n) ( \tilde{m}  \tilde{n}x)^\ep.
	\end{equation}
	
	For $r\in[1,\frac ax)$, we apply Proposition~\ref{AhgDunEsstBd} and $\widehat{\phi}(r)\ll r^{-1}$ from \eqref{Rlarge}.  
	Since
	\begin{align}
		\begin{split}
			S(R)\defeq \sqrt{ \tilde{m}  \tilde{n}}\sum_{r\in [1,R]} \left|\frac{\overline{\rho_j(m)}\rho_j(n)}{\ch \pi r_j} \right|
			\ll  A(m,n)R^{\frac 52}(\tilde{m} \tilde{n})^{\ep},
		\end{split}
	\end{align}
with the help of \eqref{Pow ax A leq Au when x large} we have
	\begin{align}\label{GeneralTraceRlarge1}
		\begin{split}
			\sqrt{ \tilde{m}  \tilde{n}}\sum_{r\in [1,\frac a{x})} \left|\frac{\overline{\rho_j(m)}\rho_j(n)}{\ch \pi r_j} \widehat{\phi}(r_j)\right|&\ll r^{-1}S(r)\Big|_{r=1}^{\frac a{x}}+\int_1^{\frac{a}{x}}S(r) r^{-2}dr\\
			&\ll A(m,n) \(\frac ax\)^{\frac 32}(\tilde{m}  \tilde{n}x)^\ep \ll A_u(m,n)(\tilde{m}  \tilde{n}x)^\ep . 
		\end{split}
	\end{align}
	
	Let 
	\[P(m,n)\defeq 2(\tilde{m}\tilde{n})^{\frac18}A(m,n)^{-\frac12}\geq 1.\] 
	Divide $r\geq \max(\frac ax,1)$ into two parts: 
	$\max\(\frac ax,1\)\leq r< P(m,n)$
	and 
	$r \geq  \max\(\frac ax,1,P(m,n)\)$. 
	We apply Proposition~\ref{AhgDunEsstBd} on the first range and $\widehat{\phi}(r)\ll r^{-1}$ from \eqref{Rlarge} to get
	\begin{equation}\label{TraceRlargeGeneralFirstPartEstmt}
		\sqrt{ \tilde{m} \tilde{n}} \sum_{\max(\frac ax,1)\leq r_j<P(m,n)} \left| \frac{\overline{\rho_j(m)} \rho_j(n)}{\ch \pi r_j}\widehat{\phi}(r_j)\right|\\
		\ll A_u(m,n)(  \tilde{m}  \tilde{n}x)^\ep
	\end{equation}
	by partial summation as in \eqref{GeneralTraceRlarge1}. We divide the second range into dyadic intervals $C\leq r_j< 2C$. Applying Proposition~\ref{AdsDukeEsstBd} with $\beta=\frac12+\ep$ and $\widehat{\phi}(r)\ll \min(r^{-1},r^{-2}\frac xT)$ from \eqref{Rlarge}, we get  
	\begin{align}\label{TraceRlargeGeneralSecondEstmtDyadic}
		\begin{split}
			\sqrt{ \tilde{m}\tilde{n}}& \sum_{C\leq r_j< 2C} \left| \frac{\overline{\rho_j(m)} \rho_j(n)}{\ch \pi r_j}\widehat{\phi}(r_j)\right|\\
			&\ll \min\(C^{-1},C^{-2}\frac xT\)C^{-\frac12}\(C^2+ (\tilde{m}^{\frac14} +\tilde{n}^{\frac14})C+ (\tilde{m}  \tilde{n})^{\frac14}\)( \tilde{m}  \tilde{n}x)^\ep\\
			&\ll \(\min\(C^{\frac12},C^{-\frac 12}\frac xT\)+ (\tilde{m}^{\frac14} +\tilde{n}^{\frac14})C^{-\frac12}+ (\tilde{m}  \tilde{n})^{\frac14}C^{-\frac32}\)( \tilde{m}  \tilde{n}x)^\ep.
		\end{split}
	\end{align}
	Next we sum over dyadic intervals. For the first term $\min(C^{\frac12},C^{-\frac 12}\frac xT)$, when
	\[\min\(C^{\frac12},C^{-\frac 12}\frac xT\)=C^{\frac12}:\quad \sum_{\substack{j\geq 1:\ 2^jC=\frac xT\\ C\geq P(m,n)}} C^{\frac12}\leq \sum_{j=1}^\infty 2^{-\frac j2}\(\frac xT\)^{\frac12}\ll \(\frac xT\)^{\frac12}, \]
	and when
	\[\min\(C^{\frac12},C^{-\frac 12}\frac xT\)=C^{-\frac12}\frac xT:\quad \sum_{j\geq 0:\ C=2^{j}\frac xT} C^{-\frac12}\frac xT\leq \sum_{j=0}^\infty 2^{-\frac j2}\(\frac xT\)^{\frac12}\ll \(\frac xT\)^{\frac12} . \]
	So after summing up from \eqref{TraceRlargeGeneralSecondEstmtDyadic}, recalling $T\asymp x^{1-\delta}$ in Setting \ref{conditionATDelta}, using $C\geq P(m,n)$ and \eqref{A leq Au leq mn 1/4}, we have
	\begin{align}\label{TraceRlargeGeneralSecondEstmt}
		\begin{split}
			\sqrt{ \tilde{m} \tilde{n}	}& \sum_{r_j\geq \max(\frac ax,1, P(m,n))} \left| \frac{\overline{\rho_j(m)} \rho_j(n)}{\ch \pi r_j}\widehat{\phi}(r_j)\right|\\
			&\ll \(\(\frac xT\)^{\frac12}+ (\tilde{m} +\tilde{n})^{\frac14}( \tilde{m}  \tilde{n})^{-\frac1{16}}A(m,n)^{\frac14}+ ( \tilde{m}  \tilde{n})^{\frac1{16}}A(m,n)^{\frac 34}\)(  \tilde{m}  \tilde{n}x)^\ep\\
			&\ll \( x^{\frac\delta2} + ( \tilde{m}  \tilde{n})^{\frac3{16}}A(m,n)^{\frac14}\) ( \tilde{m}  \tilde{n}x)^\ep.
		\end{split}
	\end{align}
Combining \eqref{TraceRlargeGeneralFirstPartEstmt} and \eqref{TraceRlargeGeneralSecondEstmt} we have
	\begin{equation}\label{TraceRlarge3General}
		\sqrt{ \tilde{m} \tilde{n}} \sum_{r_j\geq \max(\frac ax,1)} \left| \frac{\overline{\rho_j(m)} \rho_j(n)}{\ch \pi r_j}\widehat{\phi}(r_j)\right|\\
		\ll \( x^{\frac\delta2} + A_u(m,n)\) ( \tilde{m}  \tilde{n}x)^\ep. 
	\end{equation}
	
From \eqref{mainDiffrenceGEN}, \eqref{TraceSmoothingGEN}, \eqref{S2 weight 12}, \eqref{Trace estmt Holomorphic}, \eqref{GeneralTraceRisSmall}, \eqref{GeneralTraceRlarge1}, and \eqref{TraceRlarge3General}, we get
	\begin{align*}
		\sum_{\substack{x< c\leq 2x\\N|c}} \frac{S(m,n,c,\nu)}{c}-\sum_{r_j\in i (0,\frac14]}(2^{2s_j-1}-1)&\tau_j(m,n)\frac{x^{2s_j-1}}{2s_j-1}\\
		&\ll \(x^{\frac 12-\delta}+x^{\frac \delta 2}+A_u(m,n)\)(\tilde{m}\tilde{n}x)^\ep. 
	\end{align*}
	Proposition~\ref{PrereqPropGEN} follows by choosing $\delta=\frac13$. We finish the proof of Theorem~\ref{mainThm} in weight $\frac 12$.

	\subsection{On the case $k=-\frac12$}
	Recall the remark after Proposition~\ref{AhgDunEsstBd}. Let $\rho_j'(n)$ denote the Fourier coefficients of an orthonormal basis $\{v_j'(\cdot)\}$ of $\LEigenform_{\frac 32}(N,\nu)$. For each singular cusp $\mathfrak{a}$ of $(\Gamma,\nu)$, let $\Ea'(\cdot,s)$ be the associated Eisenstein series in weight $\frac 32$. Let $\rho_{\mathfrak{a}}'(n,r)$ be defined as in \eqref{FourierExpEsst} associated with $\Ea'(z,\frac12+ir)$ for $r\in \R$. 
	
	Recall the definition of the Maass lowering operator $L_k$ in \eqref{Maass Lowering op} and $H_\theta$ \eqref{HTheta} for $\theta=\frac7{64}$. By \eqref{LkBijIso}, the set
	\[\left\{v_j\defeq \(\tfrac1{16}+r_j^2\)^{-\frac12}L_{\frac 32}v_j':\ r_j\neq\tfrac i4\right\}
	\text{  is an orthonormal basis of  }
	\bigoplus_{r_j\neq \frac i4}\LEigenform_{-\frac12}(N,\nu,r_j). \] 
	Combining \cite[(4.36), (4.27) and the last equation of p. 502]{DFI2002} (which remain valid for $k\in \Z+\frac 12$), for $r_j\neq \frac i4$ and $\tilde n>0$, since
	\[L_{\frac 32} \(W_{\frac 34\tilde n,\,\im r}(4\pi \tilde ny)e(\tilde nx)\)=-(\tfrac 1{16}+r^2)W_{-\frac{\tilde n}4,\,\im r}(4\pi \tilde ny)e(\tilde nx), \] 
	the Fourier coefficient $\rho_j(n)$ of $v_j$ satisfies
	\begin{equation}\label{Weight -1/2 and 3/2 alternation - discrete, definition but not abs}
		\rho_j(n)=-(\tfrac1{16}+r^2)^{\frac12}\rho_j'(n)\qquad \text{for } r_j\neq \tfrac i4,\ \tilde n>0, 
	\end{equation}
	and then 
	\begin{equation}\label{Weight -1/2 and 3/2 alternation - discrete}
		|\rho_j(n)|\asymp|\rho_j'(n)| \quad \text{if }|r_j|\leq 1,\ \im r_j\leq \tfrac{\theta}2\qquad \text{and}\quad |\rho_j(n)|\asymp r|\rho_j'(n)|\quad \text{if }r_j\geq 1,
	\end{equation}
	where the bound $2\im r_j\leq \theta$ is from Proposition~\ref{specParaBoundWeightHalf}.

	In the case $r_j=\frac{i}{4}$, \eqref{CuspFormR0} and \eqref{Coeffi CuspFormR0} show that $\rho_j(n)=0$ and
	\begin{equation}\label{Weight -1/2 and 3/2 alternation - discrete bottom i/4}
	\tau_j(m,n)=0 \qquad \text{for\ }\tilde n>0,\ r_j=\tfrac i4. 
	\end{equation}
	
	Moreover, by \eqref{LkBijEisenstein}, if $\Ea(z,s)$ is the associated Eisenstein series in weight $-\frac12$, then
	\[L_{\frac 32}\Ea'(z,\tfrac12+ir)=(\tfrac14-ir)\Ea(z,s)\quad \text{and}\quad (\tfrac1{16}+r^2)^{\frac12}|\rho_{\mathfrak{a}}'(n,r)|=|\rho_{\mathfrak{a}}(n,r)|. \]
	We also get 
	\begin{equation}\label{Weight -1/2 and 3/2 alternation - continuous}
		|\rho_\mathfrak{a}(n,r)|\asymp|\rho_\mathfrak{a}'(n,r)| \quad \text{if }r\in[-1,1]\qquad \text{and}\quad |\rho_\mathfrak{a}(n,r)|\asymp r|\rho_\mathfrak{a}'(n,r)|\quad \text{if }|r|\geq 1. 
	\end{equation}
	
	We have the following proposition: 
	\begin{proposition}\label{PrereqPropGEN weight 1.5}
		With the same setting as Theorem~\ref{mainThm} for $k=-\frac12$, when $2x\geq A_u(m,n)^2$, we have 
		\begin{align*}
			\begin{split}
				\sum_{\substack{x< c\leq 2x\\N|c}} \frac{S(m,n,c,\nu)}{c} -\sum_{r_j\in i (0,\frac \theta2]}(2^{2s_j-1}-1)\tau_j(m,n)\frac{x^{2s_j-1}}{2s_j-1}\ll \(x^{\frac 16}+A_u(m,n)\)(\tilde{m}\tilde{n}x)^\ep,
			\end{split}
		\end{align*}
	\end{proposition}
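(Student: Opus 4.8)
The strategy is to run the argument of Proposition~\ref{PrereqPropGEN} with the trace formula applied in weight $\tfrac32$ instead of $\tfrac12$. This is legitimate: a weight $-\tfrac12$ multiplier $\nu$ is also a weight $\tfrac32$ multiplier, Theorem~\ref{traceFormula Theorem} holds for $k=\tfrac32$, and every bound on $\widetilde\phi$ and $\widehat\phi$ in Section~4 is valid for $k=\tfrac32$; the Maass lowering operator $L_{3/2}$ then transports the weight-$\tfrac32$ spectral data back to weight $-\tfrac12$ so that Propositions~\ref{AdsDukeEsstBd} and \ref{AhgDunEsstBd} can be used. Concretely I would pick $\phi$ as in Setting~\ref{conditionPhiNew5.4}, replace $\sum_{x<c\le 2x}\tfrac{S(m,n,c,\nu)}c$ by $\sum_{N|c}\tfrac{S(m,n,c,\nu)}c\phi(a/c)$ at the cost of a smoothing error $\ll x^{1/2-\delta}(\tilde m\tilde nx)^\ep$ from condition~(2) of Definition~\ref{Admissibility}, and expand the latter by the weight-$\tfrac32$ trace formula. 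The holomorphic term $\mathcal U_{3/2}$ is bounded by $A_u(m,n)(\tilde m\tilde n)^\ep$ exactly as $\mathcal U_{1/2}$ was: $\mathscr B_{3/2}$ consists of weights $\ge\tfrac72\ge\tfrac52$, so Proposition~\ref{Level lifting: holo cusp forms, final bound} and the bound $\sum_l(k+2l-1)|\widetilde\phi(k+2l)|\ll 1+a/x$ apply verbatim.

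For the weight-$\tfrac32$ spectral term $\mathcal W=4\sqrt{\tilde m\tilde n}\sum_{r_j}\tfrac{\overline{\rho_j'(m)}\rho_j'(n)}{\ch\pi r_j}\widehat\phi(r_j)$ and the Eisenstein integrals I would split over $r_j$. On the tempered range $r_j\ge 0$ and on $|r|\ge 1$ I substitute the comparisons \eqref{Weight -1/2 and 3/2 alternation - discrete} and \eqref{Weight -1/2 and 3/2 alternation - continuous} (size-preserving for $|r|\le 1$, with a harmless extra $r^{-1}$ for $|r|\ge 1$) and then repeat the weight-$\tfrac12$ argument (as in \cite[\S9.1]{QihangFirstAsympt}) word-for-word with the $k=\tfrac32$ bounds on $\widehat\phi$. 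On the exceptional range $r_j=it_j\in i(0,\tfrac\theta2]$ I insert $\rho_j'(n)=-(\tfrac1{16}+r_j^2)^{-1/2}\rho_j(n)$ from \eqref{Weight -1/2 and 3/2 alternation - discrete, definition but not abs} and $\widehat\phi(it_j)$ from Lemma~\ref{mainphiLemma} with $k=\tfrac32$: the factor $\tfrac1{16}+r_j^2=(s_j-\tfrac14)(\tfrac34-s_j)$ cancels the quotient $\Gamma(\tfrac54+t_j)/\Gamma(-\tfrac14+t_j)$ occurring in $\widehat\phi$ (via $\Gamma(\tfrac54+t)=(\tfrac14+t)\Gamma(\tfrac14+t)$ and $\Gamma(\tfrac34+t)=(t-\tfrac14)\Gamma(-\tfrac14+t)$), the sign $-1$ absorbs the discrepancy between $e^{\frac{3\pi i}4}$ and $i^{-1/2}$, and — just as in the commented computation after Lemma~\ref{mainphiLemma} — the main term becomes $(2^{2s_j-1}-1)\tau_j(m,n)\tfrac{x^{2s_j-1}}{2s_j-1}$ with $\tau_j$ the weight-$(-\tfrac12)$ coefficient; the error is $\ll A_u(m,n)(\tilde m\tilde n)^\ep$ by Proposition~\ref{AhgDunEsstBd} and \eqref{Pow ax A leq Au when x large}, since $\tfrac1{16}+r_j^2\asymp 1$ on $t_j\le\tfrac\theta2<\tfrac14$.

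I expect the main obstacle to be the term $r_j=\tfrac i4$ of $\mathcal W$ in weight $\tfrac32$. In the opposite-sign case of \cite{QihangFirstAsympt} this term vanished by \eqref{Coeffi CuspFormR0}, but for $\tilde m,\tilde n>0$ it does not, and — since $\tau_j(m,n)=0$ for $r_j=\tfrac i4$ in weight $-\tfrac12$ by \eqref{Weight -1/2 and 3/2 alternation - discrete bottom i/4} — it must be shown to be pure error. These eigenforms are the finitely many $y^{3/4}F$ with $F$ running over a $\langle\cdot,\cdot\rangle_H$-orthonormal basis of the holomorphic cusp space $S_{3/2}(N,\nu)$; since $W_{\frac34,-\frac14}(y)=y^{3/4}e^{-y/2}$, one has $\rho_j'(n)=(4\pi\tilde n)^{-3/4}a_F(n)$. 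To bound $\sum_F|a_F(n)|^2$ I would use a weight-$\tfrac32$ version of Proposition~\ref{Level lifting: holo cusp forms, final bound}: the proof of Proposition~\ref{Waibel's bound prop modified 143} goes through at $K=\tfrac32$ because $J_{1/2}$ is elementary ($\sqrt z\,J_{1/2}(2\pi z)=\tfrac1{2\pi i}(e(z)-e(-z))$, so the polynomial $H_{3/2}$ is constant), with only minor extra care in the tail range where the relevant series converges conditionally, and yields $\tfrac{\Gamma(1/2)}{(4\pi\tilde n)^{1/2}}\sum_F|a_F(n)|^2\ll_{N,\nu,\ep}(\tilde n^{3/7}+u_n)\tilde n^\ep$. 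Since $|\widehat\phi(\tfrac i4)|\ll 1$ for $k=\tfrac32$ (immediate from \eqref{widehatPhi r=i/4} and $|\sin u|\le\min(u,1)$), Cauchy--Schwarz together with $u_\ell^2\ll_{N,\nu}\tilde\ell$ then gives a contribution $\ll A_u(m,n)(\tilde m\tilde n)^\ep$, using $A_u(m,n)\gg(\tilde m\tilde n)^{143/588}$.

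Finally, collecting the smoothing error, $\mathcal U_{3/2}$, the tempered and Eisenstein contributions, the exceptional main terms, and the $r_j=\tfrac i4$ term, and taking $\delta=\tfrac13$, yields Proposition~\ref{PrereqPropGEN weight 1.5}; the passage from it to the $k=-\tfrac12$ case of Theorem~\ref{mainThm} is the same dyadic summation as in the weight-$\tfrac12$ case, noting that $\sum_{r_j\in i(0,1/4]}=\sum_{r_j\in i(0,\theta/2]}$ because the $r_j=\tfrac i4$ term of the main sum vanishes.
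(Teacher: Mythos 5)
Your overall route is the same as the paper's: smooth with the test function of Setting~\ref{conditionPhiNew5.4}, apply Proskurin's trace formula in weight $\tfrac32$, transport the spectral data to weight $-\tfrac12$ via $L_{3/2}$ so that Propositions~\ref{AdsDukeEsstBd} and \ref{AhgDunEsstBd} apply, verify $\tau_j'(m,n)=\tau_j(m,n)$ by the $\Gamma$-function cancellation you describe, and bound $\mathcal U_{3/2}$, the tempered/Eisenstein ranges, and the exceptional range exactly as in the weight-$\tfrac12$ case with the extra $r^{-2}$ from \eqref{Weight -1/2 and 3/2 alternation - discrete} compensating the weaker $k=\tfrac32$ bounds on $\widehat\phi$. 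All of that matches the paper, including the final observation that $\sum_{r_j\in i(0,1/4]}=\sum_{r_j\in i(0,\theta/2]}$ because $\tau_j=0$ at $r_j=\tfrac i4$ in weight $-\tfrac12$.

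Where you genuinely diverge is the term $r_j=\tfrac i4$ of $\mathcal W$ in weight $\tfrac32$. You are right that it does not vanish for $\tilde m,\tilde n>0$ (the space $\LEigenform_{3/2}(N,\nu,\tfrac i4)$ is $y^{3/4}S_{3/2}(N,\nu)$, whose coefficients are supported on $\tilde n>0$), and that since only $r_j\in i(0,\tfrac\theta2]$ is subtracted in $S_4$, this term must be shown to be pure error; the paper's displayed bound for $S_4$ records only $r_j\ge0$ and the Eisenstein integral and passes over this term without comment, so your treatment is a genuine addition rather than a restatement. Your reduction via $W_{3/4,-1/4}(y)=y^{3/4}e^{-y/2}$ to $\sum_F|a_F(n)|^2$ over an orthonormal basis of $S_{3/2}(N,\nu)$, followed by Cauchy--Schwarz, $u_\ell^2\ll_{N,\nu}\tilde\ell$ and $A_u(m,n)\gg(\tilde m\tilde n)^{143/588}$, does close the gap numerically (indeed any exponent $\sigma<\tfrac{143}{294}$ in $\tilde n^{-1/2}\sum_F|a_F(n)|^2\ll(\tilde n^{\sigma}+u_n)\tilde n^{\ep}$ suffices, and the factor $|\widehat\phi(\tfrac i4)|\ll(a/x)^{1/2}+1$ is harmless under $2x\geq A_u(m,n)^2$). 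The one soft spot is your claim that Proposition~\ref{Waibel's bound prop modified 143} ``goes through at $K=\tfrac32$'' with only minor care in the tail: the paper's proof of the range $c\geq n$ uses $|J_{K-1}(x)|\leq(x/2)^{K-1}/\Gamma(K)$ together with $K\geq\tfrac52$ to make \eqref{Bessel derivative c large} integrable against the $x^{1+\ep}$-size bound \eqref{KQmu general case}; at $K=\tfrac32$ that partial summation produces a divergent integral, and the underlying Petersson formula \eqref{Holomorphic forms: Fourier coeff independent of choice} is itself only conditionally convergent. So this step needs either a regularized weight-$\tfrac32$ Petersson formula or a citation to an existing second-moment bound for weight-$\tfrac32$ coefficients rather than a verbatim rerun of the $K\geq\tfrac52$ argument. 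With that input supplied, your proof is complete and in fact more careful than the paper's own write-up on this point.
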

	Note that here $\tau_j(m,n)$ is defined in weight $-\frac12$, i.e.
	\[\tau_j(m,n)=2i^{-\frac12}\overline{\rho_j(m)}\rho_j(n)\pi^{1-2s_j}(4\tilde m \tilde n)^{1-s_j}\frac{\Gamma(s_j-\frac14)\Gamma(2s_j-1)}{\Gamma(s_j+\frac 14)}\]
	where $\rho_j(n)$ is from \eqref{Weight -1/2 and 3/2 alternation - discrete} as the Fourier coefficient of $v_j\in \LEigenform_{-\frac12}(N,\nu, r_j)$.  
	
	The proof that Proposition~\ref{PrereqPropGEN weight 1.5} implies Theorem~\ref{mainThm} in the case $k=-\frac12$ is the same as the case of weight $\frac12$ before. This is because $\tau_j(m,n)=0$ for $r_j=\frac i4$ \eqref{Weight -1/2 and 3/2 alternation - discrete bottom i/4} and because \eqref{Bound for tau_j}, \eqref{taujestGEN} and \eqref{differenceBetweenEstmtedTauAndTrueTauGEN} still hold for $r_j\in i(0,\frac \theta2]$ (the process only involves estimates on $\rho_j(n)$ with some applications of Proposition~\ref{AhgDunEsstBd} in weight $-\frac12$). In the rest of this subsection we prove Proposition~\ref{PrereqPropGEN weight 1.5}. 
	
	First we show that the main terms corresponding to $r_j=it_j\in i(0,\frac \theta 2]$ are the same when we shift the weight between $-\frac12$ and $\frac32$. Recall $s_j=\frac12+t_j$. Let $\tau_j'(m,n)$ denote the corresponding coefficients for $x^{2s_j-1}$ in weight $\frac 32$: 
	\begin{equation*}
		\tau_j'(m,n)=2e^{\frac{3\pi i}4}\overline{\rho_j'(m)}\rho_j'(n)\pi^{-2t_j}(4\tilde m\tilde n)^{\frac12-t_j}\frac{\Gamma(\frac 54+t_j)\Gamma(2t_j)}{\Gamma(t_j-\frac14)},
	\end{equation*}
	where $\rho_j'(n)$ is defined at the beginning of this subsection. 
	
	We claim that 
	\begin{equation}
		\tau_j'(m,n)=\tau_j(m,n), \quad \text{for }\tilde m,\tilde n>0 \text{ and }r_j\in i(0,\tfrac14]. 
	\end{equation}
When $r_j=\frac i4$, this is true because both of them equal to zero by \eqref{Weight -1/2 and 3/2 alternation - discrete bottom i/4} and $\Gamma(0)=\infty$.
When $r_j\in i(0,\frac \theta 2]$,
	\begin{align*}
		\tau_j(m,n)&=2e^{-\frac{\pi i}4}\overline{\rho_j(m)}\rho_j(n)\pi^{-2t_j}(4\tilde m\tilde n)^{\frac12-t_j}\frac{\Gamma(\frac 14+t_j)\Gamma(2t_j)}{\Gamma(\frac 34+t_j)}\\
		&= -2e^{\frac{3\pi i}4}\(\frac1{16}-t_j^2\)\overline{\rho_j'(m)}\rho_j'(n)\pi^{-2t_j}(4\tilde m\tilde n)^{\frac12-t_j}\frac{\Gamma(\frac 54+t_j)/(\frac14+t_j)}{(-\frac14+t_j)\Gamma(-\frac 14+t_j)}\Gamma(2t_j)\\
		&=\tau_j'(m,n). 
	\end{align*}

	Recall that the definition on $\widehat{\phi}$ \eqref{widehatPhi} is for weight $k\geq 0$ and here we use $\widehat{\phi}$ for weight $\frac 32$. We derive
	\begin{equation}\label{taujReference5.4GEN weight 1.5}
			4\sqrt{\tilde{m}\tilde{n}}\;\frac{\overline{\rho_j'(m)}\rho_j'(n)}{\ch \pi r_j}\widehat{\phi}(r_j)
			=(2^{2s_j-1}-1)\tau_j'(m,n)\frac{x^{2s_j-1}}{2s_j-1}+O\(A_u(m,n)(\tilde{m}\tilde{n})^\ep\). \end{equation}
	by the same process as we derive \eqref{taujReference5.4GEN} above. Since $\tau_j'(m,n)=0$ when $r_j=\frac i4$, we have $2t_j\leq \theta<\delta$ (with $\theta=\frac 7{64}$ \eqref{HTheta} and $\delta=\frac13$ chosen later) by Proposition~\ref{specParaBoundWeightHalf} and still get
	\begin{align}\label{mainDiffrenceGEN weight 1.5}
		\begin{split}
			&\left|\sum_{\substack{x<c\leq 2x\\N|c}} \frac{S(m,n,c,\nu)}{c} -\sum_{r_j\in i (0,\frac\theta2]}(2^{2s_j-1}-1)\tau_j'(m,n)\frac{x^{2s_j-1}}{2s_j-1} \right|\\
			\leq& \left|\sum_{\substack{x<c\leq 2x\\N|c}} \frac{S(m,n,c,\nu)}{c} -\sum_{N|c>0} \frac{S(m,n,c,\nu)}{c}\phi\(\frac ac\)\right| +O\(A_u(m,n)(\tilde{m}\tilde{n})^{\ep}\)\\
			&+ \left|\sum_{N|c>0} \frac{S(m,n,c,\nu)}{c}\phi\(\frac ac\)-4\sqrt{\tilde{m}\tilde{n}}\sum_{r_j\in i (0,\frac\theta2]}\frac{\overline{\rho_j'(m)}\rho_j'(n)}{\ch \pi r_j}\widehat{\phi}(r_j)\right|\\
			=:&\;S_3+O\(A_u(m,n)(\tilde{m}\tilde{n})^{\ep}\)+S_4. 
		\end{split}
	\end{align}
	The first sum $S_3$ above can be estimated similarly by condition (2) of Definition \ref{Admissibility} as
	\begin{align}\label{TraceSmoothingGEN weight 1.5}
		\begin{split}
			S_3\leq\sum_{\substack{x-T\leq c\leq x\\2x\leq c\leq 2x+2T\\N|c}}\frac{|S(m,n,c,\nu)|}{c}\ll_{N,\nu,\delta,\ep} x^{\frac12-\delta}(\tilde{m}  \tilde{n}x)^\ep. 
		\end{split}
	\end{align}
	By Theorem~\ref{traceFormula Theorem}, 
	\[S_4\ll |\mathcal U_\frac 32|+\la \sqrt{\tilde m\tilde n}\sum_{r_j\geq 0}\frac{\overline{\rho_j'(m)}\rho_j'(n)}{\ch \pi r_j} \widehat{\phi}(r_j)+\sqrt{\tilde m\tilde n}\sum_{\mathrm{singular\ }\mathfrak{a}}\int_{-\infty}^\infty \overline{\rho_\mathfrak{a}'(m,r)}\rho_\mathfrak{a}'(n,r)\frac{\widehat{\phi}(r)}{\ch \pi r }dr\ra. \]
	The bound for $\mathcal{U}_{\frac 32}$ is done in \eqref{Trace estmt Holomorphic}. Estimates for the remaining part of $S_4$ follow from the same process as \S\ref{5.1.2} in the case of weight $\frac12$, taking \eqref{Weight -1/2 and 3/2 alternation - discrete} and \eqref{Weight -1/2 and 3/2 alternation - continuous} into account. For the same reason as the beginning of \S\ref{5.1.2}, we just record the bounds with respect to the discrete spectrum here. 
	
	For $r\in [0,1)$, we apply Proposition~\ref{AhgDunEsstBd}, \eqref{Weight -1/2 and 3/2 alternation - discrete} and \eqref{RisReal 0 to 1} to get
	\begin{equation}\label{GeneralTraceRisSmall weight 1.5}
		\sqrt{ \tilde{m}  \tilde{n}}\sum_{r\in [0,1)} \left|\frac{\overline{\rho_j'(m)}\rho_j'(n)}{\ch \pi r_j} \widehat{\phi}(r_j)\right| \ll \sqrt{ \tilde{m}  \tilde{n}}\sum_{r\in [0,1)} \left|\frac{\overline{\rho_j(m)}\rho_j(n)}{\ch \pi r_j} \widehat{\phi}(r_j)\right| 
		\ll  A(m,n) ( \tilde{m}  \tilde{n}x)^\ep.
	\end{equation}
	
	For $r\in[1,\frac ax)$, we apply Proposition~\ref{AhgDunEsstBd}, $\rho_j'(n)\ll r_j^{-1}|\rho_j(n)|$ from \eqref{Weight -1/2 and 3/2 alternation - discrete}, and  $\widehat{\phi}(r)\ll 1$ from \eqref{Rlarge}.  
	Since
	\begin{align}
		\begin{split}
			s(R)\defeq \sqrt{ \tilde{m}  \tilde{n}}\sum_{r\in [1,R]} \left|\frac{\overline{\rho_j(m)}\rho_j(n)}{\ch \pi r_j} \right|
			\ll   A(m,n)R^{\frac 72}(\tilde{m} \tilde{n})^{\ep}
		\end{split}
	\end{align}
	by Cauchy-Schwarz, with the help of \eqref{Pow ax A leq Au when x large} we have
	\begin{align}\label{GeneralTraceRlarge1 weight 1.5}
		\begin{split}
			\sqrt{ \tilde{m}  \tilde{n}}\sum_{r_j\in [1,\frac a{x})} \left|\frac{\overline{\rho_j'(m)}\rho_j'(n)}{\ch \pi r_j} \widehat{\phi}(r_j)\right|
			&\ll\sqrt{ \tilde{m}  \tilde{n}}\sum_{r_j\in [1,\frac a{x})} \left|\frac{\overline{\rho_j(m)}\rho_j(n)}{\ch \pi r_j}r_j^{-2}\right|\\
			&\ll r^{-2}s(r)\Big|_{r=1}^{\frac a{x}}+\int_1^{\frac{a}{x}}s(r) r^{-3}dr\\
			&\ll A(m,n) \(\frac ax\)^{\frac 32}(  \tilde{m}  \tilde{n}x)^\ep \\
			&\ll A_u(m,n)(  \tilde{m}  \tilde{n}x)^\ep.
		\end{split}
	\end{align}

	We still let 
	\[P(m,n)= 2(\tilde{m}\tilde{n})^{\frac18}A(m,n)^{-\frac12}\geq 1\] 
	and divide $r\geq \max(\frac ax,1)$ into two parts: 
	$\max\(\frac ax,1\)\leq r< P(m,n)$
	and 
	$r \geq  \max\(\frac ax,1,P(m,n)\)$. 
	In the first range, we apply Proposition~\ref{AhgDunEsstBd}, \eqref{Weight -1/2 and 3/2 alternation - discrete} and $\widehat{\phi}(r)\ll 1$ from \eqref{Rlarge} to get
	\begin{equation}\label{TraceRlargeGeneralFirstPartEstmt weight 1.5}
		\sqrt{ \tilde{m} \tilde{n}} \sum_{\max(\frac ax,1)\leq r_j<P(m,n)} \left| \frac{\overline{\rho_j'(m)} \rho_j'(n)}{\ch \pi r_j}\widehat{\phi}(r_j)\right|\\
		\ll A_u(m,n)(  \tilde{m}  \tilde{n}x)^\ep
	\end{equation}
	by partial summation similar as \eqref{GeneralTraceRlarge1 weight 1.5}. We divide the second range into dyadic intervals $C\leq r_j< 2C$ and apply Proposition~\ref{AdsDukeEsstBd}, \eqref{Weight -1/2 and 3/2 alternation - discrete} and $\widehat{\phi}(r)\ll \min(1,\frac x{rT})$ from \eqref{Rlarge}: 
	\begin{align}\label{TraceRlargeGeneralSecondEstmtDyadic weight 1.5}
		\begin{split}
			\sqrt{ \tilde{m}\tilde{n}}& \sum_{C\leq r_j< 2C} \left| \frac{\overline{\rho_j'(m)} \rho_j'(n)}{\ch \pi r_j}\widehat{\phi}(r_j)\right|\ll \sqrt{ \tilde{m}\tilde{n}} \sum_{C\leq r_j< 2C} \left| \frac{\overline{\rho_j(m)} \rho_j(n)}{\ch \pi r_j} r_j^{-2}\widehat{\phi}(r_j)\right|\\
			&\ll \(\min\(C^{\frac12},C^{-\frac 12}\frac xT\)+ (\tilde{m}^{\frac14} +\tilde{n}^{\frac14})C^{-\frac12}+ (\tilde{m}  \tilde{n})^{\frac14}C^{-\frac32}\)( \tilde{m}  \tilde{n}x)^\ep.
		\end{split}
	\end{align}
	Summing up from \eqref{TraceRlargeGeneralSecondEstmtDyadic weight 1.5} similar as we did after \eqref{TraceRlargeGeneralSecondEstmtDyadic} and recalling $T\asymp x^{1-\delta}$ in Setting~\ref{conditionATDelta}, we have
	\begin{align}\label{TraceRlargeGeneralSecondEstmt weight 1.5}
		\begin{split}
			\sqrt{ \tilde{m} \tilde{n}	} \sum_{r_j\geq \max(\frac ax,1, P(m,n))} \left| \frac{\overline{\rho_j'(m)} \rho_j'(n)}{\ch \pi r_j}\widehat{\phi}(r_j)\right|
			\ll \( x^{\frac\delta2} + ( \tilde{m}  \tilde{n})^{\frac3{16}}A(m,n)^{\frac14}\) (\tilde{m}  \tilde{n}x)^\ep.
		\end{split}
	\end{align}
	From \eqref{TraceRlargeGeneralFirstPartEstmt weight 1.5} and \eqref{TraceRlargeGeneralSecondEstmt weight 1.5} we have
	\begin{equation}\label{TraceRlarge3General weight 1.5}
		\sqrt{ \tilde{m} \tilde{n}} \sum_{r_j\geq \max(\frac ax,1)} \left| \frac{\overline{\rho_j'(m)} \rho_j'(n)}{\ch \pi r_j}\widehat{\phi}(r_j)\right|\\
		\ll \( x^{\frac\delta2} + A_u(m,n)\) ( \tilde{m}  \tilde{n}x)^\ep. 
	\end{equation}
	
	Combining \eqref{mainDiffrenceGEN weight 1.5}, \eqref{TraceSmoothingGEN weight 1.5}, \eqref{Trace estmt Holomorphic}, \eqref{GeneralTraceRisSmall weight 1.5}, \eqref{GeneralTraceRlarge1 weight 1.5}, and \eqref{TraceRlarge3General weight 1.5}, we get
	\begin{align*}
		\sum_{\substack{x< c\leq 2x\\N|c}} \frac{S(m,n,c,\nu)}{c}-\sum_{r_j\in i (0,\frac14]}(2^{2s_j-1}-1)&\tau_j(m,n)\frac{x^{2s_j-1}}{2s_j-1}\\
		&\ll \(x^{\frac 12-\delta}+x^{\frac \delta 2}+A_u(m,n)\)(\tilde{m}\tilde{n}x)^\ep. 
	\end{align*}
	Proposition~\ref{PrereqPropGEN weight 1.5} follows by choosing $\delta=\frac13$ and we finish the proof of Theorem~\ref{mainThm}. 
	
	\begin{proof}[Proof of Theorem~\ref{mainThmLastSecTheEquation}]
		The proof follows from the same process as \cite[\S 9.2]{QihangFirstAsympt}. Note that we need to restrict $\sum_{r_j=\frac i4}\tau_j(m,n)=0$ when $\tilde m>0$, $\tilde n>0$ and $k=\frac 12$ (and the conjugate case $\tilde m<0$, $\tilde n<0$ and $k=-\frac 12$ by \eqref{KlstmSumConj}), otherwise the sum may not converge. 
	\end{proof}	
	
	\section*{Acknowledgement}
	The author thanks the referee for the careful reading and helpful comments. The author also thanks Professor Scott Ahlgren for his plenty of helpful discussions and suggestions.

	\bibliographystyle{alpha}
	\bibliography{allrefs}

\end{document}